\def\mx{\mathrm{max}}
\def\mn{\mathrm{min}}
\def\pp{{\mat{\bP^1}}\xspace}
\def\refl{{\vee\vee}}
\def\mm#1{\mu(#1)}
\def\naut#1{\mm{\Aut#1}}
\def\nnaut#1{\n{\Aut#1}}
\def\pf{\cZ} 
\def\pfa{\wtl\cZ}
\def\fam{\bfM}
\def\ndiv{\centernot\mid}
\def\pr{\fS} 
\def\rhom{{\rm R}\lb\Hom}
\def\eq#1{\begin{equation}#1\end{equation}}
\def\eql#1#2{\eq{\label{#2}#1}}
\def\vb{\cM^\circ}
\def\tf{\cM}
\begin{document}
\title[Invariants of moduli spaces]{Invariants of moduli spaces of stable sheaves on ruled surfaces}
\author{Sergey Mozgovoy}%
\email{mozgovoy@maths.tcd.ie}

\begin{abstract}
We compute Betti numbers of the moduli spaces of arbitrary rank stable sheaves on ruled surfaces.
Our result generalizes the formula of G\"ottsche for rank one sheaves and the formula of Yoshioka for rank two sheaves.
It also confirms the conjecture of Manschot for arbitrary rank sheaves on the Hirzebruch surfaces.
\end{abstract}

\maketitle

%

\section{Introduction}
Invariants of the moduli spaces of semistable sheaves on surfaces have been intensively studied in the last several decades.
One of the motivations for their study is the Kobayashi-Hitchin correspondence \cite{lubke_kobayashi} between the above moduli spaces and the moduli spaces of (unframed) instantons on $4$-manifolds. The latter moduli spaces were interpreted by Vafa and Witten \cite{vafa_strong} in terms of the $N=4$ topologically twisted supersymmetric Yang-Mills theory on $4$-manifolds. The $S$-duality conjecture of Vafa and Witten predicts the modular behaviour of the partition function of the above theory. To test this conjecture it is essential to be able to compute the partition function, that is, the generating function of invariants of the moduli spaces of semistable sheaves on a surface. A more recent motivation is related to BPS invariants (or Donaldson-Thomas invariants) of $3$-Calabi-Yau manifolds. The canonical bundle of a surface is a non-compact $3$-Calabi-Yau manifold. In this way one can interpret invariants of the moduli spaces of semistable sheaves on a surface in terms of BPS invariants of the canonical bundle.

Betti numbers of the moduli spaces of rank one sheaves on a surface were computed by G\"ottsche \cite{gottsche_betti}. More precisely, he computed invariants of the Hilbert scheme of points on a surface parameterizing finite subschemes. Any rank one torsion free sheaf with the trivial first Chern class can be uniquely represented as an ideal of a finite subscheme. In this way one can identify the moduli spaces of rank one torsion free sheaves with Hilbert schemes.

Betti numbers of the moduli spaces of rank two sheaves on $\bP^2$ and on ruled surfaces were computed by Yoshioka \cite{yoshioka_betti,yoshioka_bettia}. There are three main ingredients in his approach. First, one computes invariants of the moduli space of semistable torsion free sheaves on a ruled surface with respect to the nef divisor $f$ corresponding to a fiber of the ruled surface. Then one uses wall-crossing formulas to determine invariants of the moduli spaces for arbitrary polarizations of the surface. Finally, to find the invariants of the moduli spaces of rank two semistable sheaves on $\bP^2$, one uses the blow-up formula that relates invariants of the moduli spaces of semistable sheaves on a surface and on its blow-up. The blow-up of $\bP^2$ at one point is the Hirzebruch surface $\Si_1$, where the Hirzebruch surface $\Si_n$, for $n\ge0$, is the ruled surface $\bP(\cO_\pp(n)\oplus\cO_\pp)$ over $\pp$. One can use information about the invariants of the moduli spaces of semistable sheaves on $\Si_1$ to determine invariants of the moduli spaces on $\bP^2$.

Euler numbers of the moduli spaces of rank three semistable sheaves on $\bP^2$ were computed by Weist and Kool \cite{weist_torus,kool_euler} using tori actions. Betti numbers of these moduli spaces were computed by Manschot \cite{manschot_betti} in the case where the first Chern class of semistable sheaves is not divisible by $3$. He used the approach of Yoshioka to reduce the problem to a computation on the Hirzebruch surface $\Si_1$ together with an observation that there are no $f$-semistable rank $3$ sheaves on $\Si_1$ with the required first Chern class. These results together with the blow-up formula were used in \cite{manschot_bps} to determine invariants of the moduli spaces of rank $3$ sheaves on $\bP^2$ and $\Si_1$ for arbitrary first Chern classes. Manschot also formulated a conjecture \cite[Conj.~4.1]{manschot_bps} about the invariants of arbitrary rank $f$-semistable sheaves on the Hirzebruch surfaces.

In this paper we will study invariants of the moduli spaces of arbitrary rank semistable sheaves on a ruled surface. Let $S$ be a surface and $H$ be a nef divisor on $S$. Given a class $\ga=(r,c_1,c_2)\in H^*(S,\bZ)$, let $\cM_H(\ga)$ denote the moduli stack of slope $H$-semistable torsion free sheaves $E$ having rank $r$ and Chern classes $(c_1,c_2)$. Let $\cM_H^\circ(\ga)$ denote its substack of locally free sheaves. Its virtual dimension is 
\[-\hi(E,E)
=2r^2\De(\ga)-r^2\hi(\cO_S),\]
where $\De(\ga)=\De(E)=\frac1r\rbr{c_2+\frac{1-r}{2r}c_1^2}$ is the discriminant of the sheaf $E$, invariant under tensoring with line bundles. We will study the generating function
\[\pfa_H(r,c_1)=\sum_{\ga=(r,c_1,c_2)}\mm{\cM_H(\ga)}t^{r\De(\ga)}\]
and its analogue $\pfa_H^\circ(r,c_1)$ for locally free sheaves,
where $\mm-$ is some motivic measure \cite{kapranov_elliptic}.
Let $S\to C$ be a ruled surface over a curve of genus $g$ and let $f$ be a fiber. A sheaf $E$ over $S$ is slope $f$-semistable if and only if a generic fiber of $E$ along $S\to C$ is semistable, that is, it is a direct sum of line bundles having the same degree.
The main result of the paper is the following theorem

\begin{theorem}
\label{th:main}
Let $r\in\bZ_{>0}$ and $c_1\in H^2(S,\bZ)$. Let \mu be either the Poincar\'e polynomial measure (for $S$ defined over $\bC$) or the point counting measure (for $S$ defined over a finite field).
If $r\ndiv f\cdot c_1$ then $\cM_f(r,c_1,c_2)$ is empty for any $c_2\in\bZ$. Otherwise
\[\pfa_f(r,c_1)
=\mm{\Bun_{C,r}}\prod_{k\ge1}\prod_{i=-r}^{r-1}Z_C(q^{rk+i}t^k),\]
\[\pfa_f^\circ(r,c_1)
=\mm{\Bun_{C,r}}\prod_{k\ge1}\prod_{i=1}^{r-1}\frac{Z_C(q^{rk+i}t^k)}{Z_C(q^{rk-i}t^k)},\]
where $q=\mm{\bA^1}$, $Z_C(t)$ is the motivic zeta function of $C$ associated to $\mm-$, and $\Bun_{C,r}$ is the stack of rank $r$ and degree zero vector bundles over $C$. The motivic measure of $\Bun_{C,r}$ is
\[\mm{\Bun_{C,r}}=\frac{\mm{\Jac C}}{q-1}\prod_{i=1}^{r-1}Z_C(q^i).\]
\end{theorem}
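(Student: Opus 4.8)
The plan is to exploit that $f$-semistability is a condition on the \emph{generic} fibre of the ruling $\pi\colon S\to C$, which converts the global problem on $S$ into a problem over $C$ together with purely local (in $C$) fibrewise modifications.

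First I would record the fibrewise criterion already stated: $E$ is $f$-semistable iff its restriction to the generic fibre $\bP^1$ is balanced, i.e. $\cO(d)^{\oplus r}$ with $d=(f\cdot c_1)/r$. If $r\ndiv f\cdot c_1$ then no such $d$ exists, so $\cM_f(r,c_1,c_2)=\es$ for all $c_2$, giving the emptiness clause. Otherwise, tensoring by $\cO_S(\si)$ shifts $d$ by $1$ and changes $c_1$ by $r\si$, while tensoring by $\pi^*L$ changes $c_1$ by $r(\deg L)f$; since $\De$ (hence the grading variable $t$) is invariant under tensoring by a line bundle, $\pfa_f(r,c_1)$ depends on $c_1$ only through $r\mid f\cdot c_1$, and I may normalise $d=0$. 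For such $E$, the sheaf $\pi_*E$ is generically a rank $r$ bundle $V$ on $C$, $R^1\pi_*E$ is supported on finitely many points, and $E\iso\pi^*V$ away from finitely many fibres.

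Next I would organise $\cM_f$ over $\Bun_{C,r}$ via the map $E\mapsto V$. The fibre over $V$ parametrises the fibrewise modifications of $\pi^*V$ at the finitely many special fibres. The key structural point is that each modification is supported over a closed point $x\in C$ and depends only on the completed local picture of $V$ at $x$; since $V\otimes\what{\cO}_{C,x}$ is always free, this datum is independent of $V$. Hence the point-count of the fibre is independent of $V$ and factorises as an Euler product over $\n{C}$:
\[\pfa_f(r,c_1)=\mm{\Bun_{C,r}}\cdot\prod_{x\in\n{C}}\cL_x(t),\]
where $\cL_x$ depends only on $\deg x$. Here $\deg V$ equals the coefficient of $f$ in $c_1$, and since $\mm{\Bun_{C,r}}$ is independent of the degree, the answer does not see that coefficient; the constant term ($E=\pi^*V$, no modification) reproduces $\mm{\Bun_{C,r}}$.

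It then remains to compute $\cL_x$. Working over $R=\what{\cO}_{C,x}$, I would stratify the rank $r$ sheaves on $\bP^1_R$ with generic fibre $\cO^{\oplus r}$ by the Shatz/Harder--Narasimhan type $a=(a_1\ge\dots\ge a_r)$, $\sum a_i=0$, of the special fibre, counting each stratum by its dimension (a power of $q$) and recording its contribution to $c_2$ (a power of $t$). Summing over all types gives a series depending only on $\deg x$, and the Euler product over $\n{C}$ assembles into $\prod_{k\ge1}\prod_{i=-r}^{r-1}Z_C(q^{rk+i}t^k)$; restricting to locally free modifications yields the ratio form for $\pfa_f^\circ$. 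As an independent check, the sequence $0\to E\to E^{\vee\vee}\to T\to 0$ with $T$ of finite length exhibits $\pfa_f/\pfa_f^\circ$ as the generating series of finite-length quotients, which one verifies is the matching product of zeta factors; and $\mm{\Bun_{C,r}}$ is the Siegel/Atiyah--Bott--Harder--Narasimhan mass formula. The main obstacle is exactly the local computation: pinning down the point-counts of the Shatz strata of bundles on $\bP^1$ over a disk and matching both the charge (power of $t$) and the stratum dimension (power of $q$) to the exponents $rk+i$, while in parallel making rigorous that the map $\cM_f\to\Bun_{C,r}$ has locally constant fibre measure so that the Euler product is valid. Everything else is bookkeeping around these two facts.
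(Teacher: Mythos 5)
Your global architecture matches the paper's: reduce to the fibrewise semistability criterion (so $r\ndiv f\cdot c_1$ gives emptiness), normalise to $f\cdot c_1=0$ using invariance of $\De$ under twisting, fibre the moduli stack over $\Bun_{C,r}$ via $E\mapsto p_*E$ with $p^*p_*E\subset E$ and negative quotient along fibres, assemble the local contributions into an Euler product over the points of $C$ (the paper implements this as the plethystic power $(-)^{\mm C}$ of a single local series), and pass from locally free to torsion free sheaves via the Quot-scheme generating function $H_r(t)$ together with $Z_S(t)=Z_C(t)Z_C(qt)$. All of that is Section 4 of the paper (Theorem \ref{th:nf} and Corollary \ref{cr:pfa formula}), and your sketch of it is essentially correct.

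The gap is exactly where you locate it, and it is not bookkeeping: the local factor is the technical heart of the paper. You propose to compute it by stratifying modifications of $\cO^{\oplus r}$ over a formal disk by the splitting type of the special fibre and counting each stratum as a power of $q$ times a power of $t$. But the locus of modifications whose special fibre is a prescribed bundle $E_0$ on $\pp$ is not an affine stratum with an evident dimension; its measure is the quantity $\vi(E_0)$ of Theorem \ref{th:hall intro}, and the paper determines it only indirectly: elementary transformations of parabolic bundles along the fibre yield the twisted commutativity relation $\vi([E]\circ[F])=\vi([F]\circ[E])u^{\rk E}t^{-\deg E}$ in the Hall algebra of $\pp$, which pins $\vi$ down uniquely, and evaluating $\sum\vi(E_0)$ then requires counting line-bundle quotients on $\pp$, a skew-derivation recursion, and finally the Heine--Gauss ${}_2\phi_1$ summation to produce $\prod_{k\ge1}\prod_{i=1}^{r-1}\frac{1-q^{rk-i}ut^k}{1-q^{rk+i}ut^k}$. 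Already for $r=2$ the contribution of special fibre $\cO(-n)\oplus\cO(n)$ is a nontrivial rational function of $q,t$ obtained from a recursion in $n$, not a monomial read off from a stratum dimension. Without this computation (or a genuine substitute for it) the exponents $rk\pm i$ in the statement are not derived, so the proof is incomplete at its central step.
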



This theorem generalizes the result of G\"ottsche \cite{gottsche_betti} for rank one sheaves and the result of Yoshioka \cite{yoshioka_bettia} for rank two sheaves. It also confirms the conjecture of Manschot \cite{manschot_bps} for the Hirzebruch surfaces. Using this theorem, one can, in principle, apply wall-crossing formulas to compute invariants for the semistable sheaves with respect to any polarization. In particular, one can do this for the Hirzebruch surface $\Si_1$ and then apply the blow-up formula to compute invariants for $\bP^2$. As already the computations for the ranks two and three show \cite{yoshioka_bettia,manschot_bps}, the final result is rather complicated and cumbersome.

%
%

Let me now explain the strategy of the proof of Theorem \ref{th:main}. First, one can reduce the computation for torsion free sheaves to the computation for vector bundles. Similarly to the rank two approach by Yoshioka \cite{yoshioka_bettia}, we will use elementary transformations of vector bundles along fibers to relate invariants of moduli spaces of semistable bundles having different first Chern classes. It turns out that these relations can be written in terms of the Hall algebra of $\bP^1$. More precisely, we will construct a function on the Hall algebra which is uniquely determined by the relations we will impose on it and is such that its integral equals the generating function of the invariants of the moduli spaces we are looking for. The Hall algebra of $\bP^1$ is an extremely well studied object. Its incarnations are the Hall algebra of the Kronecker quiver and, more importantly, the quantum affine algebra $U_q(\what\sl_2)$. Nevertheless, the following result seems to be new. It is crucial for the proof of Theorem \ref{th:main}.

\begin{theorem}
\label{th:hall intro}
Let $\cH$ be the Hall algebra of the category of vector bundles over $\bP^1$ and let $\cH_{r,0}\subset\cH$ be generated by the isomorphism classes of vector bundles having rank $r$ and degree $0$. Then there exists a unique $\bZ[q]$-linear function $\vi:\cH_{r,0}\to\bZ[q]\pser{u,t}$ such that $\vi(\cO^r_\pp)=1$ and such that, for any vector bundles $E,F$ on $\bP^1$ with all summands of $E$ having negative degree, we have
\[\vi([E]\circ[F])=\vi([F]\circ[E])u^{\rk E}t^{-\deg E}.\]
This function satisfies
\[\sum_{\overst{\rk E=r}{\deg E=0}}\vi(E)=\prod_{k\ge1}\prod_{i=1}^{r-1}\frac{1-q^{rk-i}ut^k}{1-q^{rk+i}ut^k}.\]
\end{theorem}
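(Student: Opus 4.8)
The plan is to work in the Ringel--Hall algebra $\cH$ with structure constants counting subobjects, so that $[A]\circ[B]=\sum_G F^G_{A,B}[G]$ where $F^G_{A,B}$ is the number of subbundles $G'\sbe G$ with $G'\iso B$ and $G/G'\iso A$. Two elementary vanishings on $\pp$ drive everything. If $\cN$ is a bundle all of whose Grothendieck summands have negative degree and $\cP$ has all summands of degree $\ge0$, then $\Ext^1(\cN,\cP)=\bop H^1(\cO_\pp(\ge1))=0$ and $\Hom(\cP,\cN)=\bop H^0(\cO_\pp(<0))=0$; hence $[\cN]\circ[\cP]=[\cN\oplus\cP]$ with coefficient $1$. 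In the opposite order $[\cP]\circ[\cN]=q^{\hom(\cN,\cP)}[\cN\oplus\cP]+\sum_{G}F^G_{\cP,\cN}[G]$, where every other $G$ is strictly smaller than $\cN\oplus\cP$ in the dominance order on splitting types of fixed rank and degree (a nonsplit extension makes a bundle more balanced). The minimum of this order in rank $r$, degree $0$ is $\cO_\pp^r$.

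For uniqueness I would feed $E=\cN$, $F=\cP$ into the defining relation, where $\cN,\cP$ are the negative and non-negative parts of a given $G=\cN\oplus\cP$. Expanding both products by linearity gives
\[
\vi(\cN\oplus\cP)=u^{\rk\cN}t^{-\deg\cN}\Big(q^{\hom(\cN,\cP)}\vi(\cN\oplus\cP)+\sum_{G'<\cN\oplus\cP}F^{G'}_{\cP,\cN}\,\vi(G')\Big).
\]
Since $\cN\ne0$ forces $\rk\cN\ge1$ and $-\deg\cN\ge1$, the monomial $u^{\rk\cN}t^{-\deg\cN}$ lies in the maximal ideal $(u,t)$, so $1-q^{\hom(\cN,\cP)}u^{\rk\cN}t^{-\deg\cN}$ is a unit in $\bZ[q]\pser{u,t}$ and $\vi(\cN\oplus\cP)$ is expressed through $\vi(G')$ with $G'$ strictly smaller. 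As the dominance order is well founded with bottom element $\cO_\pp^r$ (where $\vi=1$), this determines $\vi$ uniquely and shows the recursion converges in the $(u,t)$-adic topology.

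For existence I would define $\vi$ by this recursion and check that it satisfies the relation for \emph{every} negative $E$ and \emph{every} $F$, not only for the canonical pair $(\cN,\cP)$. The point is that the splitting $[F]=[F_{<0}]\circ[F_{\ge0}]$ holds with coefficient $1$ by the same vanishing, and the twist $E\mapsto u^{\rk E}t^{-\deg E}$ is a homomorphism from the Grothendieck monoid to $\bZ[q]\pser{u,t}^{\times}$; so using associativity of $\circ$ one reduces an arbitrary instance of the relation to the canonical ones, and the fact that the twist is a genuine character rules out any cocycle obstruction. This consistency check is where I expect to spend real effort.

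Finally, to evaluate the sum I would pass to the completed Hall algebra and use the Harder--Narasimhan factorization $\mathbf 1=\prod^{\rightarrow}_{a}\mathbf 1^{\mathrm{ss}}_a$ (ordered by increasing slope), where $\mathbf 1=\sum_E[E]$ and $\mathbf 1^{\mathrm{ss}}_a=\sum_{m\ge0}[\cO_\pp(a)^{\oplus m}]$ is the semistable slope-$a$ series, the semistable bundles on $\pp$ being exactly the $\cO_\pp(a)^{\oplus m}$. Writing $\mathbf 1_{r,0}=\sum_{\cN,\cP}[\cN]\circ[\cP]$ and applying the commutation relation turns $\sum_E\vi(E)$ into $\sum_{\cN,\cP}u^{\rk\cN}t^{-\deg\cN}\vi([\cP]\circ[\cN])$; combined with the factorization this exhibits $\vi$, twisted by the $u^{\rk}t^{-\deg}$ character, as an integration map of Reineke type, so that $\sum_E\vi(E)$ becomes the rank-$r$, degree-$0$ part of an ordered product of local factors $\sum_m\vi(\cO_\pp(a)^{\oplus m})$. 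Each local factor is a geometric series whose ratio is governed by the Euler form $\hom(\cO_\pp(a)^{\oplus m},\cO_\pp(b)^{\oplus m'})$, a power of $q$, and summing them and regrouping by the total transported degree $k$ should produce exactly the factors $\tfrac{1-q^{rk-i}ut^k}{1-q^{rk+i}ut^k}$ for $i=1,\dots,r-1$. Carrying out this regrouping---checking that the Euler-form exponents assemble into $rk\pm i$ and that all cross terms telescope---is the main obstacle and the technical heart of the argument; the rank-two case already reproduces the $k=1$ factor, since the recursion gives $\vi(\cO_\pp(1)\oplus\cO_\pp(-1))=\tfrac{(q^3-q)ut}{1-q^3ut}$ and hence $1+\vi(\cO_\pp(1)\oplus\cO_\pp(-1))=\tfrac{1-qut}{1-q^3ut}$, a reassuring guide.
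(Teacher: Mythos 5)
Your uniqueness argument is essentially sound and matches what the paper treats as the ``straightforward'' part: taking $E=\cN$, $F=\cP$ to be the negative and non-negative parts of a splitting type, using $\Hom(\cP,\cN)=0$ and $\Ext^1(\cN,\cP)=0$ to get $[\cN]\circ[\cP]=[\cN\oplus\cP]$, and inverting $1-q^{\hom(\cN,\cP)}u^{\rk\cN}t^{-\deg\cN}$ in $\bZ[q]\pser{u,t}$ does determine $\vi$ by descending induction on the dominance order. (Existence, which you flag as requiring a consistency check, the paper gets for free from the geometric construction in the proof of Theorem~\ref{th:local}, where $\vi(E_0)$ is realized as a weighted count of locally free sheaves on the surface restricting to $E_0$ on the fiber; you would be better off citing that than attempting the cocycle verification.)

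The genuine gap is the evaluation of $\sum_E\vi(E)$, which is the actual content of the theorem, and your proposed route does not work as stated. The function $\vi$ is a linear functional defined only on $\cH_{r,0}$; it is not an algebra homomorphism, and the twisted commutation relation holds only one-sidedly (for negative $E$). Consequently the ``local factors'' $\sum_m\vi(\cO_\pp(a)^{\oplus m})$ for $a\ne0$ are not even defined, and there is no mechanism by which the Harder--Narasimhan factorization $\mathbf 1=\prod^{\rightarrow}_a\mathbf 1^{\mathrm{ss}}_a$ converts $\sum_E\vi(E)$ into a product of scalar series; the step ``this exhibits $\vi$ \ldots as an integration map of Reineke type'' is precisely the assertion that needs proof, and you defer it (``should produce exactly the factors\ldots is the main obstacle''). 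The paper's proof replaces this with concrete input: an explicit count $g(\al,n)$ of surjections $\cO^\al\to\cO(n)$ (Corollary~\ref{cr:surj}), packaged as the skew-derivation identity $\de_{\cO(n)}(B_{r,d,n})=f_rB_{r+1,d+n,n}$; a stratification of the degree-zero splitting types with $\mn_\al\ge -n$ by the multiplicity $k=\al_{-n}$, yielding a first-order recursion for $\vi(A_{n,k})$ in $k$; and finally the Heine--Gauss ${}_2\phi_1$ summation to evaluate the resulting finite $q$-hypergeometric sum and produce the telescoping factors $\prod_{i=1}^{r-1}\frac{1-q^{rn-i}ut^n}{1-q^{rn+i}ut^n}$, after which one lets $n\to\infty$. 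Your $r=2$ computation is correct and agrees with the paper, but for general $r$ none of this machinery (or a substitute) is supplied, so the identity \eqref{eq:vi theorem} remains unproved in your write-up.
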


The paper is organized as follows. In Section \ref{sec:prelim} we collect preliminary results on ruled surfaces, Hirzebruch-Riemann-Roch theorem, semistable sheaves, motivic measures, relation between the generating functions for torsion free and locally free sheaves over surfaces, and finally, description of the Hall algebra of the category of vector bundles over \pp. In Section \ref{sec:elem and local} we introduce parabolic bundles over surfaces and their elementary transformations. Then we prove a local version of Theorem \ref{th:main} by reducing it to a computation on the Hall algebra of \pp. This computation is postponed until Section \ref{sec:counts on P1}. In Section \ref{sec:canonical} we show the existence of a canonical filtration of torsion free sheaves on a ruled surface. Then we prove basic structure results about the building blocks of the canonical filtration: slope $f$-semistable sheaves. In Section \ref{sec:global} we prove Theorem \ref{th:main}. In Section \ref{sec:counts on P1} we prove Theorem \ref{th:hall intro}. In Section \ref{sec:wall-cross} we collect the wall-crossing formula and the blow-up formula for the generating functions of invariants of moduli spaces of semistable sheaves over surfaces. We indicate how these formulas can be used to compute the generating functions for $\bP^2$.

I would like to thank Thomas Nevins and Olivier Schiffmann for useful discussions. I would like to thank K{\=o}ta Yoshioka for the help with his paper \cite{yoshioka_chamber}.


\section{Preliminaries}
\label{sec:prelim}
\subsection{Ruled surfaces}
\label{sec:prelim-ruled}
Let $C$ be a smooth projective curve of genus $g$ and let $L$ be a line bundle over $C$ of degree $e\ge0$. Define a ruled surface
\[S=\bP(L\oplus\cO_C)\]
(we use the old-fashioned notation for projective bundles \cite[B.5.5]{fulton_intersection}). Let $p:S\to C$ be the projection map, $L\subset S$ be the canonical embedding, $C_0=S\ms L$ be the divisor at infinity and $f\subset S$ be the divisor of a fiber. Then the N\'eron-Severi group of $S$ is
\[\NS(S)=H^2(S,\bZ)=\bZ C_0\oplus\bZ f.\]
We have \cite[V.2.3-2.9]{hartshorne_algebraic}
\eq{f^2=0,\qquad C_0f=1,\qquad C_0^2=-e.}
If $L'\subset L\oplus\cO_C$ is an embedding of vector bundles and $C'\subset S$ is the corresponding divisor (note that $\deg L'\le e$), then \cite[V.2.9]{hartshorne_algebraic}
\[\deg L'=-C_0\cdot C',\qquad C'\sim C_0+(e-\deg L')f.\]
This implies that any effective divisor in $S$ is of the form $mC_0+nf$ for $m,n\ge0$. 
Note that $(C_0+ef)f=1,\,(C_0+ef)C_0=0$. Therefore the positive cone \[C(S)=\sets{H\in\NS(S)}{HC_0\ge0,\, Hf\ge0}\]
is generated by $C_0+ef$ and $f$.
For any $m,n\in\bR_{\ge0}$, we define
\eql{H_{m,n}=m(C_0+ef)+nf\in C(S).}{eq:Hmn}
The canonical divisor of $S$ is \cite[V.2.11]{hartshorne_algebraic}
\eq{K_S=-2C_0+(2g-2-e)f.}
In particular, $K_S^2=8(1-g)$.

\subsection{Hirzebruch-Riemann-Roch theorem}
Let $S$ be a surface. We will say that a coherent sheaf $E$ on $S$ has class $\ga=(r,c_1,c_2)$ if $\rk E=r$, $c_1(E)=c_1$, and $c_2(E)=c_2$. The second Chern character of $E$ is equal to $\ch_2=\oh c_1^2-c_2$. The Todd class of the tangent sheaf $\cT_S$ of $S$ is given by \cite[\S A.4]{hartshorne_algebraic}
\[\td(S)=(1,-\oh K_S,\hi(\cO_S)),\]
where $K_S$ is the canonical divisor of $S$. The Euler characteristic of the sheaf $E$ equals, by the Hirzebruch-Riemann-Roch theorem, to
\begin{equation}
\hi(E)=\int_S\ch E\td(S)=\ch_2-\oh K_S\cdot c_1+r\hi(\cO_S).
\label{eq:HRR}
\end{equation}
If $F$ is another coherent sheaf having class $\ga'=(r',c'_1,c_2')$ and the second Chern character $\ch'_2=\oh c_1'^2-c'_2$ then
\eq{\hi(E,F)=(r\ch'_2+r'\ch_2- c_1 c'_1)-\oh K_S(r c'_1-r' c_1)+rr'\hi(\cO_S)=:\hi(\ga,\ga').}
In particular,
\eql{\hi(E,E)=2r\ch_2- c_1^2+r^2\hi(\cO_S)}{eq:hi EE}
and
\eql{\ang{E,F}=\hi(E,F)-\hi(F,E)=K_S(r' c_1-r c'_1)=:\ang{\ga,\ga'}.}{eq:ang EF}
Define the discriminant
\eql{\De(\ga)=\De(E)=\frac1r\rbr{c_2+\frac{1-r}{2r}c_1^2}
=\frac{c_1^2}{2r^2}-\frac{\ch_2}{r}
.}{eq:discr}
It is additive under tensoring with a vector bundle and invariant under tensoring with a line bundle (see \cite[\S3.4]{huybrechts_geometry}).
We obtain from \eqref{eq:hi EE} and \eqref{eq:discr} that
\eq{\hi(E,E)=-2r^2\De(E)+r^2\hi(\cO_S).}

\begin{remark}
Let $p:S\to C$ be a ruled surface over a curve of genus $g$. Then
\[\hi(\cO_S)=\hi(Rp_*\cO_S)=\hi(\cO_C)=1-g.\]
\end{remark}

\subsection{Semistable sheaves}
In this section we will define the notions of Gieseker and slope semistability with respect to nef divisors. It should be noted that one usually defines semistability only with respect to ample divisors \cite{huybrechts_geometry}, but it is important for our future considerations to include also the case of nef divisors.

Let $S$ be a projective surface and let $H$ be a nef divisor on $S$. A nef divisor $H$ is characterized by the property that if $E$ is a coherent sheaf on $S$ having dimension one then $H\cdot c_1(E)\ge0$. Given a coherent sheaf $E$ on $S$, define its $H$-slope by
\[\mu_H(E)=\frac{H\cdot c_1(E)}{\rk(E)}\]
and define its reduced Hilbert $H$-polynomial by
\[p_H(E,n)=\frac{\hi(E(nH))}{\rk(E)}.\]
\begin{remark}
By the Hirzebruch-Riemann-Roch Theorem we have
\[\hi(E(nH))=\oh n^2 H^2r+nH(c_1-\frac r2K_S)+\hi(E),\]
where $r=\rk(E)$, $c_1=c_1(E)$. Therefore, for $r>0$, we have
\[p_H(E,n)=\oh n^2 H^2+n\rbr{\mu_H(E)-\frac{HK_S}2}+\frac{\hi(E)}r.\]
\end{remark}

We say that a torsion free sheaf $E$ is slope $H$-semistable if for any proper nonzero torsion free subsheaf $F\subset E$ we have
\[\mu_H(F)\le \mu_H(E).\]
We say that a torsion free sheaf $E$ is Gieseker $H$-semistable if for any proper nonzero torsion free subsheaf $F\subset E$ we have
\[p_H(F,n)\le p_H(E,n),\qquad n\gg0.\]
According to the previous remark the last condition is equivalent to
\begin{enumerate}
	\item either $\mu_H(F)<\mu_H(E)$
	\item or $\mu_H(F)=\mu_H(E)$ and $\frac{\hi(F)}{\rk(F)}\le\frac{\hi(E)}{\rk(E)}$.
\end{enumerate}
In particular, Gieseker semistability implies slope semistability. Both slope semistability and Gieseker semistability can be interpreted in terms of Bridgeland stability conditions on the exact category of torsion free sheaves: for the slope $H$-semistability we consider the stability function
\[Z(E)=-H\cdot c_1(E)+i\rk(E)\]
and for the Gieseker $H$-semistability we consider the stability function
\[Z(E)=-(H\cdot c_1(E)+\eps\hi(E))+i\rk(E)\]
for $0<\eps\ll1$. One can prove the existence and uniqueness of the Harder-Narasimhan filtrations for the slope and Gieseker stability conditions with respect to nef divisors in the same way as with respect to ample divisors \cite{huybrechts_geometry}.

The following simple result is the reason for the wall-crossing formulas on the surface.

\begin{lemma}
\label{lm:ext2}
Let $H$ be a nef divisor such that $H\cdot K_S\le0$.
Let $E,F$ be two slope (or Gieseker) $H$-semistable sheaves such that $\mu_H(E)<\mu_H(F)$. Then $\Ext^2(E,F)=0$.
\end{lemma}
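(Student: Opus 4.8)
We have a nef divisor $H$ with $H\cdot K_S\le 0$, and two slope (or Gieseker) $H$-semistable sheaves $E,F$ with $\mu_H(E)<\mu_H(F)$. We must show $\Ext^2(E,F)=0$.

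The plan is to use Serre duality to convert the vanishing of $\Ext^2(E,F)$ into the vanishing of a $\Hom$ group, and then to exploit semistability to force that $\Hom$ to be zero. By Serre duality on the surface $S$ we have
\[\Ext^2(E,F)\iso \Hom(F,E\ts K_S)^\vee,\]
so it suffices to prove that $\Hom(F,E\ts K_S)=0$. I would observe that $E\ts K_S$ is again slope $H$-semistable (tensoring by a line bundle preserves semistability, since it shifts every subsheaf's slope by the same constant), and compute its slope:
\[\mu_H(E\ts K_S)=\mu_H(E)+H\cdot K_S.\]

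The key step is the standard fact that a nonzero homomorphism between slope-semistable sheaves can only exist when the slope of the source does not exceed the slope of the target; more precisely, if $\phi:F\to G$ is nonzero with $F,G$ slope $H$-semistable, then $\mu_H(F)\le\mu_H(G)$. (One applies semistability of $F$ to the subsheaf $\ker\phi\subset F$ and semistability of $G$ to the subsheaf $\im\phi\subset G$, comparing slopes; this is where torsion-freeness and the existence of Harder--Narasimhan filtrations discussed above are used.) Applying this with $G=E\ts K_S$, a nonzero element of $\Hom(F,E\ts K_S)$ would force
\[\mu_H(F)\le\mu_H(E\ts K_S)=\mu_H(E)+H\cdot K_S\le\mu_H(E),\]
where the last inequality is exactly the hypothesis $H\cdot K_S\le0$. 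This contradicts the assumption $\mu_H(E)<\mu_H(F)$, so $\Hom(F,E\ts K_S)=0$ and hence $\Ext^2(E,F)=0$.

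I expect the only genuine subtlety to be bookkeeping around the nef (rather than ample) setting: Serre duality holds on any smooth projective surface regardless of the polarization, so that step is unaffected, but I should make sure the slope comparison lemma for homomorphisms is valid for slope semistability defined via a merely nef $H$. Since the excerpt has already established existence and uniqueness of Harder--Narasimhan filtrations for nef divisors, the comparison argument goes through verbatim: a nonzero map $F\to G$ has image of slope at least $\mu_H(F)$ (by semistability of $F$ applied to the kernel) and at most $\mu_H(G)$ (by semistability of $G$ applied to the image). The Gieseker case reduces to the slope case because Gieseker semistability implies slope semistability, so the same slope inequality applies. Thus the main obstacle is essentially nonexistent once Serre duality and the $\Hom$-slope comparison are in hand; the argument is a short two-move deduction.
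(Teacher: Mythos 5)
Your proof is correct and follows exactly the same route as the paper's: Serre duality gives $\Ext^2(E,F)\iso\Hom(F,E\ts\om_S)^*$, the hypothesis $H\cdot K_S\le0$ gives $\mu_H(E\ts\om_S)\le\mu_H(E)<\mu_H(F)$, and the standard slope comparison for maps between semistable sheaves forces the $\Hom$ to vanish. The paper states this in three lines; you merely spell out the intermediate comparison lemma more explicitly.
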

\begin{proof}
By the Serre duality $\Ext^2(E,F)\iso\Hom(F,E\ts\om_S)^*$. But $\mu_H(E\ts\om_S)\le\mu_H(E)<\mu_H(F)$. Therefore $\Hom(F,E\ts\om_S)=0$.
\end{proof}

\subsection{Motivic measures}
Let \bk be a field and $\Sch_\bk$ be the category of schemes of finite type over \bk. Following \cite{kapranov_elliptic}, we define a measure $\mu$ on $\Sch_\bk$ with values in a commutative ring $R$ to be a function which associates with every $X\in\Sch_\bk$ an element $\mu(X)\in\bR$ such that:
\begin{enumerate}
	\item If $U\subset X$ is open then $\mu(X)=\mu(U)+\mu(X\ms U)$.
	\item $\mu(X\xx Y)\iso\mu(X)\mu(Y)$.
\end{enumerate}
We will also assume that $R$ is a \la-ring \cite{getzler_mixed,heinloth_note,mozgovoy_computational} and for any quasi-projective scheme $X\in\Sch_\bk$ we have
\[\mu(\Sym^kX)=\si_k(\mu(X)).\]

\begin{remark}
For $\bk=\bC$ and $R=\bQ[q^\oh]$, define
\[\mu(X)=\sum_{k,i}(-1)^kq^{i/2}\dim\Gr^W_iH_c^k(X,\bC),\]
where $W$ is the weight filtration on $H_c^k(X,\bC)$. If $X$ is smooth and projective then $\mu(X)=\sum_k(-1)^kq^{k/2}\dim H^k(X,\bC)$ is the Poincar\'e polynomial of $X$.
\end{remark}

\begin{remark}
For $\bk=\bF_q$ and $R=\bZ$, define $\mu(X)=\n{X(\bF_q)}$. This function satisfies the first two axioms of the motivic measure. For the last axiom we have to work with the ring of counting sequences \cite{mozgovoy_poincare} instead of the ring $\bZ$.
\end{remark}

We define the zeta function of $X\in\Sch_\bk$ as
\eq{Z_X(t)=\sum_{k\ge0}\si_k(\mu(X))t^k\in R\pser t.}
The ring $\hat R=R\pser t$ has a natural \la-ring structure:
\eq{\si_n(rt^k)=\si_n(r)t^{kn},\qquad \si_n(f+g)=\sum_{k=0}^n\si_k(f)\si_{n-k}(g).}
We define the plethystic exponential $\Exp:\hat R_+\to1+\hat R_+$, where $\hat R_+=t\hat R$, by the formula
\eq{\Exp(f)=\sum_{k\ge0}\si_k(f).}
In particular, $Z_X(t)=\Exp(\mu(X)t)$. The map $\Exp$ has the inverse $\Log:1+\hat R_+\to\hat R_+$ (see \cite{getzler_mixed}). We define a plethystic power map on $\hat R$ by the formula (see \cite{mozgovoy_motivicb} for its basic properties) \eql{f^g=\Exp(g\Log(f)).}{eq:power}

If $C$ is a curve of genus $g$ then $Z_C(t)$ can be written in the form (see \cite{kapranov_elliptic})
\eq{Z_C(t)=\frac{P_C(t)}{(1-t)(1-qt)},}
where $P_C(t)$ is a polynomial of degree $2g$ and $q=\mu(\bA^1)$. Moreover,
\eql{Z_C(t)=(qt^2)^{g-1}Z_C(1/qt).}{eq:Zqt}
The value $P_C(1)$ is equal to $\mu(\Jac C)$. Let $\Bun_{C,r,d}$ denote the moduli stack of vector bundles over $C$ having rank $r$ and degree $d$. Let $\Bun_{C,r}=\Bun_{C,r,0}$. The motive of $\Bun_{C,r,d}$ is independent of $d$ and equals (see \cite[\S6]{behrend_motivica})
\eq{\mu(\Bun_{C,r})=\frac{\mu(\Jac C)}{q-1}q^{(n^2-1)(g-1)}\prod_{i=2}^nZ_C(q^{-i}).}
Applying equation \eqref{eq:Zqt}, we obtain
\eq{\mu(\Bun_{C,r})=\frac{\mu(\Jac C)}{q-1}\prod_{i=1}^{r-1}Z_C(q^i).}

\subsection{Torsion free and locally free sheaves}
\def\maut#1{\mm{\Aut#1}}
\label{sec:tf and lf}
Let \fam be a set of isomorphism classes of sheaves on a surface $S$ having rank $r$ and first Chern class $c_1$. Define
\eql{\pf_\fam(t)=\sum_{E\in\fam}q^{\oh\hi(E,E)}\frac{t^{-\ch_2(E)}}{\maut E},}{eq:pf}
\eql{\pfa_\fam(t)=\sum_{E\in\fam}\frac{t^{r\De(E)}}{\maut E}.}{eq:pfa}
Then
\eql{\pf_\fam(t)=q^{\oh r^2\hi(\cO_S)}t^{-\frac{c_1^2}{2r}}\pfa_\fam(q^{-r}t).}{eq:ch and De}
\rem{$\hi(E,E)=-2r^2\De(E)+r^2\hi(\cO_S)$}
\rem{$r\De(E)=-\ch_2(E)+\frac{c_1^2}{2r}$}

\begin{remark}
Then reason for using $\ch_2(E)$ in \eqref{eq:pf} is that $\ch_2$ is additive with respect to exact sequences. This (as well as the factor $q^{\oh\hi(E,E)}$) will be important in the formulation of the wall-crossing formula (see Prop.~\ref{pr:wall-cross}). The reason for using $r\De(E)$ in \eqref{eq:pfa} is that $r\De(E)$ is invariant under tensoring with line bundles. This will be important in the proof of Corollary \ref{cr:pfa formula}.
\end{remark}

Given a rank $r$ locally free sheaf $E$ over $S$ and $n\ge0$, let $\Quot^n(E)$ denote the scheme of finite quotients of $E$ having length $n$. Yoshioka \cite[Theorem 0.4]{yoshioka_betti} proved that
\eq{\sum_{n\ge0}\mm{\Quot^n(E)}t^n=\prod_{k\ge1}\prod_{i=1}^rZ_S(q^{kr-i}t^k)=:H_r(t).}

Assume that the family \fam consists of locally free sheaves and let $\fam'$ be the set of isomorphism classes of torsion free sheaves $F$ such that $F^\refl\in\fam$.


\begin{lemma}
\label{lm:tf and lf}
We have
\[\pf_{\fam'}(t)=H_r(q^{-r}t)\pf_\fam(t),\qquad 
\pfa_{\fam'}(t)=H_r(t)\wtl\pf_\fam(t).\]
\end{lemma}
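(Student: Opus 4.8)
The plan is to stratify the family $\fam'$ by the reflexive hull of each sheaf and by the colength of the associated inclusion. Every torsion free sheaf $F$ on the smooth surface $S$ fits into a canonical short exact sequence
\[0\to F\to F\refl\to Q_F\to 0,\]
where $F\refl=F^{\vee\vee}$ is locally free (reflexive sheaves on a smooth surface are locally free) and $Q_F$ has finite length $n=\ell(Q_F)$, supported on the locus where $F$ fails to be locally free. Thus giving $F\in\fam'$ with $F\refl=E\in\fam$ and $\ell(Q_F)=n$ amounts to giving a subsheaf $F\subset E$ of colength $n$, i.e.\ a point of $\Quot^n(E)$. First I would record the effect on the numerical invariants: since $Q_F$ has rank $0$ and $c_1(Q_F)=0$, Hirzebruch–Riemann–Roch \eqref{eq:HRR} gives $\ch_2(Q_F)=\hi(Q_F)=n$, whence $\ch_2(F)=\ch_2(E)-n$ and, by \eqref{eq:discr}, $r\De(F)=r\De(E)+n$. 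Combining with the identity $\hi(E,E)=-2r^2\De(E)+r^2\hi(\cO_S)$ yields $\hi(F,F)=\hi(E,E)-2rn$. Hence $t^{-\ch_2(F)}=t^{-\ch_2(E)}t^n$, $t^{r\De(F)}=t^{r\De(E)}t^n$, and $q^{\oh\hi(F,F)}=q^{\oh\hi(E,E)}q^{-rn}$.

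The second and crucial step is a groupoid-cardinality argument identifying the inner sum over isomorphism classes with a Quot-scheme motive. The group $\Aut E$ acts on $\Quot^n(E)$ by $g\cdot F=g(F)$. The key observation is that any isomorphism $F\iso F'$ of torsion free sheaves extends uniquely to an isomorphism of reflexive hulls $E=F\refl\iso F'\refl=E$; consequently two colength-$n$ subsheaves of $E$ are abstractly isomorphic precisely when they lie in one $\Aut E$-orbit, and the stabilizer of $F$ is the image of the embedding $\Aut F\emb\Aut E$ obtained by extending automorphisms across the double dual, so $\mathrm{Stab}_{\Aut E}(F)\iso\Aut F$. The motivic orbit–stabilizer formula for the quotient stack $[\Quot^n(E)/\Aut E]$ then gives
\[\sum_{\substack{F\in\fam',\ F\refl=E\\ \ell(Q_F)=n}}\frac{1}{\maut F}=\frac{\mm{\Quot^n(E)}}{\maut E}.\]

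Finally I would assemble the generating functions. Substituting the invariant identities and the orbit count into \eqref{eq:pfa} and summing first over $n$, then over $E$, gives
\[\pfa_{\fam'}(t)=\sum_{E\in\fam}\frac{t^{r\De(E)}}{\maut E}\sum_{n\ge0}\mm{\Quot^n(E)}\,t^n.\]
By Yoshioka's evaluation $\sum_{n\ge0}\mm{\Quot^n(E)}t^n=H_r(t)$ the inner sum depends only on $r=\rk E$ and not on $E$, so it factors out of the outer sum, giving $\pfa_{\fam'}(t)=H_r(t)\pfa_\fam(t)$. The identical computation applied to \eqref{eq:pf}, where the extra weight $q^{\oh\hi(F,F)}t^{-\ch_2(F)}=q^{\oh\hi(E,E)}t^{-\ch_2(E)}(q^{-r}t)^n$ shifts the argument to $q^{-r}t$, yields $\pf_{\fam'}(t)=H_r(q^{-r}t)\pf_\fam(t)$; alternatively the $\pf$-identity is deduced from the $\pfa$-identity through \eqref{eq:ch and De}.

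The main obstacle is the groupoid count of the second step. Two things must be made precise there: that isomorphisms of torsion free sheaves extend canonically and bijectively to their reflexive hulls, giving the identification of $\Aut F$ with the stabilizer in $\Aut E$; and that the passage from the weighted sum over isomorphism classes to $\mm{\Quot^n(E)}/\maut E$ is valid for the chosen motivic measure. The latter requires working in a completed $\la$-ring in which $\maut E$ is invertible and using that $\Aut E$, being the unit group of the algebra $\End(E)$, is a special linear algebraic group, so that $\Aut E$-torsors are Zariski-locally trivial and the stacky orbit–stabilizer relation holds for both the Poincar\'e-polynomial and the point-counting measures.
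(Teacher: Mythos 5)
Your proposal is correct and follows essentially the same route as the paper: stratify $\fam'$ by the reflexive hull $E=F^{\vee\vee}$ and the colength $n=\ell(E/F)$, use the orbit--stabilizer relation for the $\Aut E$-action on colength-$n$ subsheaves to convert the weighted sum over isomorphism classes into $\mm{\Quot^n(E)}/\mm{\Aut E}$, and then invoke Yoshioka's evaluation of $\sum_n\mm{\Quot^n(E)}t^n=H_r(t)$. Your treatment is somewhat more explicit than the paper's (the shift $\ch_2(F)=\ch_2(E)-n$ versus $r\De(F)=r\De(E)+n$, the resulting substitution $t\mapsto q^{-r}t$ in the $\pf$-version, and the justification of the groupoid count), but there is no substantive difference in the argument.
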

\begin{proof}
Let $F\in\fam'$ and $E=F^\refl$, $n=l(E/F)$. Then
\[\ch_2(E)=\ch_2(F)+n.\]
Any automorphism of $F$ induces an automorphism of $E$. Conversely, $\Aut E$ acts on the embeddings $F\emb E$. The stabilizer corresponds to $\Aut F$ and the orbit corresponds to different subobjects of $E$ isomorphic to $F$. Therefore
\[\frac1{\maut F}=\frac{\mm{\sets{U\subset E}{U\iso F}}}{\maut E}.\]
This implies
\[\sum_{F\in\fam'}\frac{t^{-\ch_2(F)}}{\maut F}
=\sum_{E\in\fam}\frac1{\maut E}\sum_{n\ge0}\sum_{\overst{U\subset E}{l(E/U)=n}}t^{-\ch_2(E)+n}
=H_r(t)\sum_{E\in\fam}\frac{t^{-\ch_2(E)}}{\maut E}.
\]
\end{proof}

\subsection{Hall algebra of 
\texorpdfstring{\pp}{P1}}
Given an exact $\bF_q$-linear category \cA with finite $\Hom$ and $\Ext^1$ groups, we define its Hall algebra $\cH(\cA)$ as follows. Its basis is the set of isomorphism classes of the objects in \cA. Multiplication is given by
\eq{[M]\circ[N]=\sum_{[X]}g_{MN}^X[X],}
where
\eq{g_{MN}^X=\n{\sets{U\subset X}{X/U\iso M,\,U\iso N}}.}
It is known, that this product is associative.

Let us describe the Hall algebra $\cH$ of the category of vector bundles on \pp in more detail.
Let $\pr=\cP(\bZ)$ denote the set of maps $\al:\bZ\to\bN$ with finite support
$$\supp\al=\sets{k\in\bZ}{\al_k\ne0}.$$
The basis of $\cH$ is given by the elements $[\cO^\al]$, where
\eq{\cO^\al=\bop_{k\in\bZ}\cO(k)^{\oplus\al_k}.}
Define addition and multiplication in $\pr$ by
\[(\al+\be)_k=\al_k+\be_k,\qquad (\al\be)_k=\sum_{i}\al_i\be_{k-i}.\]
For any $\al\in \pr$, define $\al^*\in \pr$ by $\al^*_k=\al_{-k}$. Given $n\in\bZ$, define $[n]\in \pr$ by $[n]_k=\de_{nk}$.
Then the product $\al[n]\in \pr$ satisfies
$$\al[n]_k=\al_{k-n}.$$
It is clear that
\[\cO^\al\oplus\cO^\be\iso\cO^{\al+\be},\quad  \cO^\al\ts\cO^\be\iso\cO^{\al\be},\quad
(\cO^\al)^*\iso\cO^{\al^*},\quad
\cO^{\al[n]}=\cO^\al\ts\cO(n).\]
For any $\al\in \pr$, define
\[\mn_\al=\min\sets{k\in\bZ}{\al_k\ne0},\qquad \mx_\al=\max\sets{k\in\bZ}{\al_k\ne0}.\]
The product in \cH is described by the following relations \cite[Theorem 10]{baumann_hall}:
\begin{align*}
&[\cO^\al]\circ[\cO^\be]
=[\cO^{\al+\be}],\qquad\text{if }\mx_\al<\mn_\be,\\
&[\cO(n)]^k=[k]^!_q[\cO(n)^{\oplus k}],
\end{align*}
where $[k]^!_q=\prod_{i=1}^k[i]_q$ and $[i]_q=\frac{q^i-1}{q-1}$. Finally, for any $m<n$,
\begin{multline}
[\cO(n)]\circ[\cO(m)]
=q^{n-m+1}[\cO(m)\oplus\cO(n)]\\
+q^{n-m-1}(q^2-1)\sum_{i=1}^{[\frac{n-m}2]}[\cO(m+i)\oplus\cO(n-i)].
\end{multline}

\section{Elementary transformations and the local formula}
\label{sec:elem and local}
\subsection{Elementary transformations}
Let $S$ be a surface, $X\subset S$ be a curve and $I\subset\cO_S$ be an ideal of $X$ in $S$. A parabolic bundle over $(S,X)$ is a triple $(E,M,g)$, where $E$ is a vector bundle over $S$, $M$ is a vector bundle over $X$ and $g:E\to M$ is a surjection. 
We will usually denote a parabolic bundle just as $E\to M$. We define its elementary transformation to be a new parabolic bundle $E'\to M'$ with
$$E'=\ker(E\to M),\qquad M'=\ker(E|_X\to M)$$
and with the canonical surjection $E'\to M'$.
See \cite[\S5.2]{huybrechts_geometry} for the proof that $E'$ is locally free. 
An automorphism of the parabolic bundle $g:E\to M$ is a pair $(f_1,f_2)$ of automorphisms $f_1\in\Aut E$, $f_2\in\Aut M$ such that $gf_1=f_2g$. The groups of automorphisms of $E\to M$ and of $E'\to M'$ are isomorphic.

\begin{lemma}
\label{lm:elem trans}
Let $E''\to M''$ be the elementary transformation of $E'\to M'$.
Then
$$E''\iso I\ts E,\qquad M''\iso I\ts M.$$
\end{lemma}
\begin{proof}
Tensoring exact sequences
$$0\to I\to\cO_S\to\cO_X\to0,\qquad 0\to E'\to E\to M\to0$$
with each other, we obtain
\begin{diagram}
&&&&0&&0\\
&&&&\dTo&&\dTo\\
&&0&\rTo&I\ts E'&\rTo&I\ts E&\rTo&I\ts M&\rTo&0\\
&&&&\dTo&&\dTo&&\dTo_0\\
&&0&\rTo&E'&\rTo&E&\rTo&M&\rTo&0\\
&&&&\dTo&&\dTo&&\dEq\\
0&\rTo&M''&\rTo&E'|_X&\rTo&E|_X&\rTo&M&\rTo&0\\
&&&&\dTo&&\dTo\\
&&&&0&&0
\end{diagram}
Applying the snake lemma to the middle columns, we obtain $M''\iso I\ts M$. Next,
$$E''=\ker(E'\to M')\iso\ker(E\to E|_X)\iso I\ts E.$$
\end{proof}

\subsection{Negative sheaves}
\label{sec:negative}
Assume that $X\iso\bP^1$ and $X\cdot X=0$.
Our main example is a ruled surface $p:S\to C$ with a fiber $X=S_x=p\inv(x)$ over some point $x\in C$.
We have $\deg I|_X=-X\cdot X=0$ and therefore $I|_X\iso\cO_X$.
This implies that in the above lemma we have actually
\begin{equation}
M''\iso I\ts M\iso M.
\label{eq:M''}
\end{equation}

Let $\Coh_XS$ denote the category of coherent sheaves over $S$ having support in $X$. For any such sheaf $F$ we define $r(F)\ge0$ and $d(F)\in\bZ$ by the formulas
\begin{equation}
c_1(F)=r(F)X,\qquad d(F)=\hi(F)-r(F).
\end{equation}

\begin{remark}
If $F$ is a sheaf over $X$ then $r(F)$ is the rank of $F$ over $X$ and $d(F)$ is the degree of $F$ over $X$.
\end{remark}

\begin{remark}
Given $F\in\Coh_XS$ with $r(F)=n$, we have
\[\hi(F)=\ch_2(F)-\oh K_S\cdot c_1(F)+\rk(F)^2\hi(\cO_S)
=-c_2(F)-\oh K_S\cdot nX.\]
By \cite[II.8.20]{hartshorne_algebraic}, we have $\om_X\iso\om_S\ts I^\vee\ts\cO_X=\om_S|_X$. Therefore $K_S\cdot X=-2$. This implies $\hi(F)=-c_2(F)+n$ and $d(F)=-c_2(F)$.
\end{remark}

We will study semistable sheaves in the category $\Coh_XS$ with respect to the stability function
\[Z(F)=-d(F)+ir(F)\]
and the corresponding slope function $\mu(F)=\frac{d(F)}{r(F)}$.
There are obvious semistable sheaves in $\Coh_XS$ coming from the semistable sheaves on $X\iso\pp$.

\begin{proposition}
\label{pr:filt by line bundles}
Any sheaf $F\in\Coh_XS$ has a filtration $0\subset F_0\subset F_1\subset\dots F_r=F$ such that $F_0$ has dimension zero and the quotients $F_i/F_{i-1}$ are line bundles over $X$ with non-increasing degrees.
\end{proposition}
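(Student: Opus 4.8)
The plan is to strip off the zero–dimensional part, and then induct on $r(H)$ for a pure one–dimensional sheaf, peeling off at each stage a sub-line-bundle of maximal degree; the triviality $I|_X\iso\cO_X$ (\ie $X\cdot X=0$) is what keeps the degrees from increasing.

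\emph{First reduction.} Let $F_0\sbe F$ be the maximal subsheaf of dimension zero. Then $H:=F/F_0$ is either zero or pure of dimension one, and $r(H)=r(F)$. Any filtration $0=H_0\subset H_1\subset\dots\subset H_r=H$ whose quotients $H_i/H_{i-1}$ are line bundles over $X$ of non-increasing degree pulls back along $F\twoheadrightarrow H$ to a filtration $F_0\subset F_1\subset\dots\subset F_r=F$ (with $F_i$ the preimage of $H_i$) whose quotients $F_i/F_{i-1}\iso H_i/H_{i-1}$ are the same line bundles. So it suffices to treat a pure one-dimensional $H\in\Coh_XS$. I will use repeatedly the following sublemma: a pure $G\in\Coh_XS$ with $r(G)=1$ is a line bundle over $X$. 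Indeed, at the generic point $\eta$ of $X$ the local ring $\cO_{S,\eta}$ is a DVR and $G_\eta$ has length $r(G)=1$, so $G_\eta$ is killed by the local equation of $X$; hence $IG$ is supported in dimension zero, and purity of $G$ forces $IG=0$. Thus $G$ is a sheaf on $X\iso\pp$ which is torsion free (again by purity) of rank one, \ie a line bundle.

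\emph{Producing a maximal sub-line-bundle.} Every line bundle $\cO_X(d)\emb H$ is annihilated by $I$, so it factors through the coherent subsheaf $H':=\lHom(\cO_S/I,H)=\sets{s\in H}{Is=0}$, which is an $\cO_X$-module. Since $H$ is supported on $X$ we have $I^NH=0$ for some $N$, so $H'\supseteq I^{N-1}H\ne0$; and $H'$ has no zero-dimensional subsheaf (such would sit inside the pure $H$), so $H'$ is locally free on $\pp$, say $H'\iso\bigoplus_i\cO_X(a_i)$. Hence the sub-line-bundles of $H$ are exactly those of $H'$, their degrees are bounded above by $b:=\max_i a_i$, and $L:=\cO_X(b)\emb H'\emb H$ realizes this maximum. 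Purity of $H/L$ follows from the sublemma: a nonzero zero-dimensional subsheaf of $H/L$ would have preimage a pure rank-one $P\sbe H$, necessarily a line bundle $\cO_X(b')$ with $b'>b$, contradicting maximality.

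\emph{The main obstacle: monotonicity.} It remains to show that every sub-line-bundle $\cO_X(c)\emb H/L$ has $c\le b$; granting this, induction on $r(H)$ applied to $H/L$ (with $r(H/L)=r(H)-1$) produces line bundles of non-increasing degree $c_1\ge\dots\ge c_{r-1}$ with $c_1\le b$, and prepending $L$ gives the desired filtration of $H$. Let $P\sbe H$ be the preimage of $\cO_X(c)$, so $r(P)=2$ and $\mu^+:=$ (maximal degree of a sub-line-bundle of $P$) $\le b$. Every line-bundle quotient of $P$ is killed by $I$, hence factors through $P|_X=P/IP$, so $c$ is at most the degree of the locally free part of $P|_X$. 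If $IP=0$ then $P$ is a bundle on $\pp$ and this degree equals $\mu^+\le b$. If $IP\ne0$, then by the sublemma $IP\iso\cO_X(e)$ with $e\le\mu^+\le b$, and $I^2P=0$ for dimension reasons; the multiplication map $(I/I^2)\ts_{\cO_X}(P/IP)\twoheadrightarrow IP$ together with $I/I^2=I|_X\iso\cO_X$ yields a surjection $P|_X\twoheadrightarrow\cO_X(e)$, which forces the locally free part of $P|_X$ to have degree $e$. In either case $c\le b$. This final step is the crux, and it is precisely here that the triviality of the normal bundle $I|_X\iso\cO_X$, \ie the hypothesis $X\cdot X=0$, prevents the degree from jumping across the non-reduced thickening.
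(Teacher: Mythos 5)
Your proof is correct in substance but takes a genuinely different route from the paper's. The paper starts from an arbitrary filtration with indecomposable $\cO_X$-module quotients (line bundles and skyscrapers) and then reorganizes it by showing, via Serre duality on $S$, adjunction, and the triviality of $I|_X$, that any ``wrong-order'' extension --- a skyscraper sitting above a line bundle, or a line bundle of larger degree above one of smaller degree --- is already defined over $X$, where it can be re-filtered in the correct order. You instead build the filtration directly: strip the zero-dimensional part, then repeatedly peel off a maximal-degree sub-line-bundle $L=\cO_X(b)$ of the pure part (located inside the $I$-torsion $H'$), with the whole weight of the argument falling on the monotonicity claim for the rank-two preimage $P$, where $I/I^2\iso\cO_X$ enters through the surjection $P|_X\twoheadrightarrow IP$. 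Your version is more elementary (no Serre duality, only local algebra at the generic point of $X$) and makes the role of $X^2=0$ very transparent; the paper's is shorter because two Ext computations dispose of both exchange moves at once.

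One step is wrong as written, though easily repaired. In the monotonicity paragraph you assert that any line-bundle quotient of $P$ has degree at most the degree of the locally free part of $P|_X$, and, in the case $IP=0$, that ``this degree equals $\mu^+$''. When $IP=0$ the sheaf $P|_X=P$ is itself locally free of rank two on $X\iso\pp$, say $\cO(v_1)\oplus\cO(v_2)$ with $v_1\ge v_2$; its degree is $v_1+v_2$, not $\mu^+=v_1$, and a rank-two bundle on \pp admits line-bundle quotients of degree exceeding its own degree (e.g.\ $\cO^{\oplus 2}\twoheadrightarrow\cO(1)$ from the Euler sequence), so the general principle you invoke is false. What saves you is that you are not bounding an arbitrary line-bundle quotient but the specific quotient $P/L$ with $L$ of maximal degree: since $\mu^+=b$ forces $v_1=b$, you get $c=\deg P-b=v_2\le v_1=b$ directly. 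With that sentence replaced, the argument is complete; the case $IP\ne0$, which is the one where the thickening genuinely matters, is handled correctly as written.
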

\begin{proof}
We can always find a filtration such that the quotients are sheaves over $X$ and, moreover, are indecomposable (that is, are line bundles or skyscrapers). Our goal is to show that we can reorganize our filtration in such way that all skyscrapers are pushed to the left and the line bundles have non-increasing degrees. To do this we will show that given an extension
\[0\to L\to F\to Q\to0\]
over $S$ with $L$ a line bundle over $X$ and $Q$ a dimension zero sheaf over $X$, the sheaf $F$ is defined over $X$, and given an extension
\[0\to L\to F'\to L'\to0\]
over $S$ with $L,L'$ line bundles over $X$ and $\deg L<\deg L'$, the sheaf $F'$ is again defined over $X$. Then we can exchange a filtration with quotients $(L,Q)$ (resp.\ $(L,L')$) by an appropriate filtration of $F$ (resp.\ $F'$). To prove the first statement, we can assume that $L=\cO_X$. Applying the functor $\Hom_S(Q,-)$ to the exact sequence
\begin{equation}
0\to I\to\cO_S\to\cO_X\to0
\label{eq:ex seq}
\end{equation}
we obtain a long exact sequence
\[\Ext^1(Q,\cO_S)\to\Ext^1(Q,\cO_X)\to\Ext^2(Q,I)\to^0\Ext^2(Q,\cO_S).\]
Note that $\Ext^1(Q,\cO_S)\iso\Ext^1(\cO_S,\om_S\ts Q)^*=0$. Therefore (we use $\om_X\iso\om_S\ts I^\vee\ts\cO_X$ \cite[II.8.20]{hartshorne_algebraic})
\begin{multline*}
\Ext^1(Q,\cO_X)\iso\Ext^2(Q,I)\iso\Hom(\cO_S,\om_S\ts I^\vee\ts Q)^*\\
\iso\Hom(\cO_X,\om_X\ts Q)^*\iso\Ext^1_X(Q,\cO_X),
\end{multline*}
that is, all extensions are defined over $X$. To prove the second statement, we can assume that $L'=\cO_X$ and $\deg L<0$. Applying the functor $\Hom_S(-,L)$ to the exact sequence \eqref{eq:ex seq},
we obtain a long exact sequence
\[\Hom(I,L)\to\Ext^1(\cO_X,L)\to\Ext^1(\cO_S,L)\to^0\Ext^1(I,L).\]
Note that $\Hom(I,L)\iso\Hom(\cO_S,I^\vee\ts L)\iso\Hom(\cO_S,L)=0$ (recall that $I\ts\cO_X\iso\cO_X$). Therefore
\[\Ext^1(\cO_X,L)\iso\Ext^1(\cO_S,L)\iso H^1(X,L)\iso\Ext^1_X(\cO_X,L),\]
that is, all extensions are defined over $X$.
\end{proof}

\begin{corollary}
A semistable object in $\Coh_XS$ either has dimension zero, or has a filtration such that the quotients are line bundles over $X$ having the same degree.
\end{corollary}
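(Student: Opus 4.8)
The plan is to derive the corollary from Proposition \ref{pr:filt by line bundles} together with the defining slope inequality of semistability. If $F$ has dimension zero there is nothing to prove, so I assume $r(F)>0$, whence $F$ has finite slope $\mu(F)=d(F)/r(F)$. Applying Proposition \ref{pr:filt by line bundles} gives a filtration $0\subset F_0\subset F_1\subset\dots\subset F_r=F$ in which $F_0$ has dimension zero and each $F_i/F_{i-1}$ is a line bundle over $X$ of some degree $d_i$, with $d_1\ge d_2\ge\dots\ge d_r$.

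The first step is to show $F_0=0$. A nonzero dimension-zero sheaf $Q$ has $r(Q)=0$ and $d(Q)=\hi(Q)>0$, so it sits at slope $+\infty$, the maximal slope for the stability function $Z=-d+ir$. Since $F_0\subset F$ would then be a subobject of strictly larger slope than $\mu(F)$, semistability forces $F_0=0$. Thus the filtration begins at $F_1$, and its successive quotients are $r$ line bundles over $X$ of rank $1$ and degrees $d_1\ge\dots\ge d_r$.

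The second step is a squeezing argument. Since $r(-)$ and $d(-)$ are additive in short exact sequences, the filtration gives $r(F)=r$ and $d(F)=\sum_{i=1}^r d_i$, so $\mu(F)$ is the average $\frac1r\sum_{i=1}^r d_i$; as the $d_i$ are non-increasing, this yields $\mu(F)\le d_1$. On the other hand $F_1\subset F$ is a line-bundle subobject of slope $d_1$, so semistability gives $d_1\le\mu(F)$. Hence $\mu(F)=d_1$, and since the average of the non-increasing sequence $(d_i)$ equals its maximum $d_1$ only when every term equals $d_1$, we conclude $d_1=\dots=d_r$. The filtration $0=F_0\subset F_1\subset\dots\subset F_r=F$ then has all quotients line bundles over $X$ of the same degree, as required.

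I do not anticipate a genuine obstacle, as this is a short consequence of the preceding proposition. The only points needing care are recording that dimension-zero sheaves occupy the maximal slope $+\infty$ (so that they are always destabilizing and must be excluded from a positive-rank semistable sheaf), and the elementary observation that the slope of $F$ is the average of the degrees $d_i$, which is what lets the semistability bound on $F_1$ collapse the non-increasing sequence to a constant.
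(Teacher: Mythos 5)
Your proof is correct and is precisely the argument the paper leaves implicit (the corollary is stated without proof as an immediate consequence of Proposition \ref{pr:filt by line bundles}): a nonzero dimension-zero subsheaf sits at maximal phase for $Z=-d+ir$ and so must vanish, and then comparing the slope of the line-bundle subobject $F_1$ with the average $\mu(F)=\frac1r\sum d_i$ collapses the non-increasing degree sequence to a constant. No gaps.
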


Using the Harder-Narasimhan filtrations we can show that there exists a torsion pair $(\cT,\cF)=(\cA^{\ge 0},\cA^{<0})$ on the category $\cA=\Coh_XS$, such that $\cA^{\ge 0}$ is generated by semistable objects having non-negative slope and $\cA^{\ge 0}$ is generated by semistable objects having negative slope.
Note that $\cA^{<0}$ is closed under taking subobjects.
 A sheaf from $\cA^{<0}$ will be called negative. Thus, a sheaf in $\Coh_XS$ is negative if and only if it has a filtration such that all its quotients are line bundles over $X$ having negative degrees. In particular, a sheaf over $X$ is negative if and only if it is a vector bundle and all its summands have negative degrees.

\subsection{Local formula}
As in the previous section, let $X\subset S$ be a curve such that $X\iso\bP^1$ and $X^2=0$. Let $F$ be a rank $r$ locally free sheaf over $S$ such that $F|_X\iso\cO^r_\pp$. 
For any $n\ge0$ and $c_2\in\bZ$, define $A_{F,X}(n,c_2)$ to be the set of locally free sheaves $E\sp F$ (we can consider them as subsheaves $E^\vee\subset F^\vee$) such that
\begin{enumerate}
	\item $E/F\in\Coh_XS$ is negative.
	\item $c_1(E/F)=nX$, $c_2(E/F)=c_2$ (that is, $r(E/F)=n$, $d(E/F)=-c_2$).
\end{enumerate}

\begin{remark}
Our moduli problem should be compared to the one studied by Kapranov \cite{kapranov_elliptic}. He considered a curve $X\subset S$ with $X^2<0$, an $\SL(n)$-bundle $F$ over $S\ms X$, and the moduli space of $\SL(n)$-bundles $E$ over $S$ extending $F$.
\end{remark}

\begin{remark}
Generally, we have $\deg E|_X=X\cdot c_1(E)$. Therefore $X\cdot c_1(F)=\deg\cO_\pp^r=0$ and
\[\deg E|_X=X\cdot c_1(E)=X\cdot c_1(E/F)=0.\]
\end{remark}

\begin{remark}
If $E|_X\iso\cO^r_\pp$ then $E=F$. Indeed, if $E/F\ne0$ then there exists a negative line bundle $L$ over $X$ with a surjection $E/F\to L$. But this implies that there exists a surjection $\cO_\pp^r\iso E|_X\to (E/F)|_X\to L$, which is impossible.
\end{remark}

\begin{theorem}
\label{th:local}
We have
\[\pf_{F,X}
:=\sum_{n\ge0}\sum_{c_2}\sum_{E\in A_{F,X}(n,c_2)}\frac{u^nt^{c_2}}{\nnaut E}
=\frac{1}{\nnaut F}\prod_{k\ge1}\prod_{i=1}^{r-1}\frac{1-q^{rk-i}ut^k}{1-q^{rk+i}ut^k}.\]
\end{theorem}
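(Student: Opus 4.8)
The plan is to recognize the right-hand side as the evaluation $\sum_V\vi(V)$ appearing in Theorem \ref{th:hall intro} and to identify the geometric generating function $\pf_{F,X}$ with this Hall-algebra quantity. First I would record that for $E\in A_{F,X}(n,c_2)$ the restriction $V:=E|_X$ is a vector bundle on $X\iso\pp$ of rank $r$ and, by the remark computing $\deg E|_X=X\cdot c_1(E)=0$, of degree $0$, so that $[V]\in\cH_{r,0}$. I would then \emph{define} a $\bZ[q]$-linear function $\Psi$ on $\cH_{r,0}$ by setting, on the basis of isomorphism classes,
\[\Psi(V)=\nnaut F\sum_{E|_X\iso V}\frac{u^{r(E/F)}t^{c_2(E/F)}}{\nnaut E},\]
the inner sum running over $E\in A_{F,X}$ with $E|_X\iso V$. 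By construction $\nnaut F\cdot\pf_{F,X}=\sum_V\Psi(V)$, so the theorem will follow once I show $\Psi=\vi$ and invoke the product formula of Theorem \ref{th:hall intro}.

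By the uniqueness assertion in Theorem \ref{th:hall intro}, it suffices to verify that $\Psi$ satisfies the three defining properties of $\vi$. Linearity holds by construction. For the normalization, the last remark preceding the statement shows that $E=F$ is the only sheaf with $E|_X\iso\cO^r_\pp$, whence $\Psi(\cO^r_\pp)=\nnaut F/\nnaut F=1$. The whole content is therefore the commutation relation
\[\Psi([N]\circ[M])=\Psi([M]\circ[N])\,u^{\rk N}t^{-\deg N}\qquad(N\text{ negative}).\]

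To produce this relation I would translate the Hall product into geometry. Restricting the defining sequence $0\to F\to E\to Q\to0$ (with $Q=E/F$ negative) to $X$ and using $X^2=0$, so that $I|_X\iso\cO_X$ and hence $\Tor_1^S(Q,\cO_X)\iso Q$, yields the four-term sequence
\[0\to Q\to\cO^r_\pp\to E|_X\to Q\to0,\]
which exhibits the negative bundle $Q$ simultaneously as a subobject of $F|_X=\cO^r_\pp$ and as a quotient of $V=E|_X$. This is exactly the transfer of a negative factor from one side of a Hall product to the other that the commutation relation encodes. Concretely, using the line-bundle filtration of Proposition \ref{pr:filt by line bundles} I would filter $N$ by negative line bundles (a dévissage) and reduce to $N=\cO(-a)$, $a>0$. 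A single elementary transformation of $E$ along a surjection $E|_X\to\cO(-a)$ that factors through the negative quotient $Q$ peels one line bundle off $E/F$, producing $E'\spe F$ with $(r(E'/F),c_2(E'/F))$ shifted from $(r(E/F),c_2(E/F))$ by $(1,a)=(\rk N,-\deg N)$, hence the weight by the factor $u^{\rk N}t^{-\deg N}$; simultaneously the surjection onto $\cO(-a)$ becomes a subobject $\cO(-a)\emb E'|_X$, which is precisely the passage from a factor in $[N]\circ[M]$ to one in $[M]\circ[N]$ on restrictions. The isomorphism of parabolic automorphism groups recorded after Lemma \ref{lm:elem trans} keeps the $\nnaut$-weights in step, and Lemma \ref{lm:elem trans} together with \eqref{eq:M''} guarantees the transformation is reversible with exactly this effect on the invariants.

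The main obstacle is this last step: matching the Hall structure constants $g^V_{NM}$ with the count of admissible elementary transformations and keeping an exact account of the factors $\nnaut E$, so that the two sides of the commutation relation agree coefficient by coefficient and not merely up to the scalar $u^{\rk N}t^{-\deg N}$. Once $\Psi=\vi$ is established, the product formula of Theorem \ref{th:hall intro} gives
\[\sum_V\Psi(V)=\prod_{k\ge1}\prod_{i=1}^{r-1}\frac{1-q^{rk-i}ut^k}{1-q^{rk+i}ut^k},\]
and dividing by $\nnaut F$ yields the asserted expression for $\pf_{F,X}$.
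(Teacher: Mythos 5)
Your proposal follows essentially the same route as the paper: define the Hall-algebra function geometrically (your $\Psi$ is $\nnaut F\cdot\vi$ in the paper's normalization), check the normalization at $\cO^r_\pp$, establish the commutation relation by elementary transformations, and then invoke the product formula of Theorem \ref{th:hall intro}, whose proof in Section \ref{sec:counts on P1} is independent of this argument. The one genuine divergence is in how you propose to prove the commutation relation, and here the paper's version is both simpler and resolves the step you flag as ``the main obstacle.'' Rather than filtering $N$ by negative line bundles and peeling them off one at a time (which forces you to iterate the two-factor relation inside longer Hall products), the paper performs a \emph{single} elementary transformation for an arbitrary negative bundle $M$ of any rank: a sub-bundle $U\subset E_0=E|_X$ with $E_0/U\iso M$, $U\iso M'$ is literally a parabolic structure $E\to N$ of type $(M,M')$ modulo $\Aut M$, so the Hall structure constant $g^{E_0}_{MM'}$ needs no further matching --- the left-hand side $\sum_{[E]}\#\{U\}/\nnaut E$ is the groupoid count of parabolic bundles $(E,E\to N)$ of type $(M,M')$ weighted by the parabolic automorphism groups, Lemma \ref{lm:elem trans} together with \eqref{eq:M''} gives an automorphism-preserving bijection onto parabolic bundles of type $(M',M)$ (using $\Hom(F,N)=0$ to see $F\subset E'$ and that $E'/F\subset E/F$ stays negative), and converting back gives the relation with the shift $(n,c_2)\mto(n-\rk M,c_2+\deg M)$. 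So the bookkeeping you worry about is just orbit--stabilizer, and your dévissage is unnecessary (though it would also work, since only the line-bundle case of the relation is used in Section \ref{sec:counts on P1}). One small inaccuracy in your motivation: $\Tor_1^S(Q,\cO_X)\iso Q$ requires $Q$ to be an $\cO_X$-module, whereas elements of $\Coh_XS$ may live on a thickening of $X$; this does not affect the actual argument, which only uses surjections $E|_X\to N$ onto bundles on $X$.
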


\begin{proof}
For any vector bundle $E_0$ of rank $r$ and degree zero on $\bP^1$, define
$$A(E_0,n,c_2)=\sets{E\in A_{F,X}(n,c_2)}{E|_X\iso E_0}$$
and the generating function
\[\vi(E_0)
=\sum_{n\ge0}\sum_{c_2}\sum_{E\in A(E_0,n,c_2)}\frac{u^nt^{c_2}}{\nnaut E}
.\]
We extend $\vi$ to the Hall algebra of $\bP^1$ by linearity. 
Note that if $E_0=\cO^r_\pp$ and $E\in A(E_0,n,c_2)$, then $E=F$ and therefore $n=0$, $c_2=0$ and
$$\vi(\cO^r_\pp)=\frac{1}{\nnaut{F}}.$$

Given a pair $(M,M')$ of vector bundles on $\bP^1$, we say that a parabolic bundle $E\to N$ over $(S,X)$ has type $(M,M')$ if
$N\iso M$ and $\ker(E|_X\to N)\iso M'$. We have seen (see Lemma \ref{lm:elem trans} and equation~\eqref{eq:M''}) that elementary transformations of parabolic bundles induce a bijection between parabolic bundles of type $(M,M')$ and parabolic bundles of type $(M',M)$. Assume that $M$ is negative. Let $E\to N$ be a parabolic bundle of type $(M,M')$ with $E\in A_{F,X}(n,c_2)$ and let $E'\to N'$ be its elementary transformation. Then
\[\Hom(F,N)\iso\Hom(F|_X,N)\iso\Hom(\cO^r_\pp,M)=0\]
and therefore $F\subset\ker(E\to N)=E'$. The sheaf $E'/F\subset E/F$ is negative, as a subsheaf of a negative sheaf.
This implies that
\[E'\in A_{F,X}(n-\rk M,c_2+\deg M).\]
Summarizing, for any vector bundles $M,M'$ on $\bP^1$ with negative $M$, we have:
\begin{multline*}
\sum_{[E_0]}\sum_{\overst{U\subset E_0}{E_0/U\iso M,U\iso M'}}
\sum_{E\in A(E_0,n,c_2)}\frac{u^nt^{c_2}}{\nnaut E}\\
=\sum_{[E_0]}\sum_{\overst{U\subset E_0}{E_0/U\iso M',U\iso M}}
\sum_{E\in A(E_0,n-\rk M,c_2+\deg M)}\frac{u^nt^{c_2}}{\nnaut E}.
\end{multline*}
Using generating functions we can write
\[\vi(M\circ M')=u^{\rk M}t^{-\deg M}\vi(M'\circ M),\]
where $\circ$ is the product in the Hall algebra of $\bP^1$.
We will prove in Section \ref{sec:counts on P1} that the last condition implies
$$\pf_{F,X}=\sum_{[E_0]}\vi(E_0)=\frac{1}{\nnaut F}\prod_{k\ge1}\prod_{i=1}^{r-1}\frac{1-q^{rk-i}ut^k}{1-q^{rk+i}ut^k}.$$
\end{proof}
\section{Canonical filtration}
\label{sec:canonical}
Let $p:S\to C$ be a ruled surface as in Section \ref{sec:prelim-ruled}. For each rank $2$ vector bundle over $S$, Brosius \cite{brosius_rank} constructed a canonical short exact sequence with that bundle in the middle, and used such sequences to classify rank $2$ vector bundles over $S$. In this section we will construct canonical filtrations for arbitrary rank torsion free sheaves over $S$. 

Let $E$ be a coherent sheaf over $S$. For any point $x\in C$, we define the fiber $S_x=p\inv(x)$ and we call the restriction $E_x=E|_{S_x}$ the fiber of $E$ along $p$ at the point $x$.
If $E$ is a torsion free sheaf over $S$ then $E^\refl$ is a locally free sheaf and $E^\refl/E$ has dimension zero. This implies that generic fibers of $E$ and $E^\refl$ along $S\to C$ are isomorphic and $c_1(E)=c_1(E^\refl)$. A generic fiber $E_x$ is a vector bundle isomorphic to $\cO_\pp^\al$ for some $\al\in\pr$ independent of $x$.

\begin{theorem}
Let $E$ be a torsion free sheaf over $S$. Then there exists a unique filtration
\[0=F_0\subset F_1\subset\dots\subset F_n=E\]
such that, for any $1\le i\le n$, the sheaf $F_{i}/F_{i-1}$ is torsion free, its generic fiber is isomorphic to $\cO_\pp(k_i)^{r_i}$ for some $k_i\in\bZ$, $r_i\in\bZ_{>0}$, and
\[k_1>k_2>\dots>k_n.\]
The generic fiber of $E$ is isomorphic to $\bop_{i=1}^n\cO_\pp(k_i)^{r_i}$. If $E$ is locally free then every sheaf $F_i$ is locally free.
\end{theorem}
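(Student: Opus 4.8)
The plan is to build the filtration one step at a time by extracting the most positive part of the generic fiber, realizing it globally through a relative pushforward, and then inducting on the number of distinct fiberwise degrees.

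First recall that the generic fiber $E_x$ is isomorphic to $\cO_\pp^\al$ for a fixed $\al\in\pr$; writing $k_1>\dots>k_n$ for the elements of $\supp\al$ and $r_i=\al_{k_i}$ gives at once the asserted decomposition $E_x\iso\bop_{i=1}^n\cO_\pp(k_i)^{r_i}$ of the generic fiber. It remains to globalize the first step $\cO_\pp(k_1)^{r_1}\sbe E_x$ of the fiberwise Harder--Narasimhan filtration. Since $\cO_S(C_0)$ restricts to $\cO_\pp(1)$ on every fiber (because $C_0f=1$), the twist $E(-k_1C_0)$ has generic fiber $\bop_i\cO_\pp(k_i-k_1)^{r_i}$, whose only non-negative summand is $\cO_\pp(0)^{r_1}$. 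I would then let $F_1$ be the saturation in $E$ of the image of the adjunction morphism
\[p^*p_*\bigl(E(-k_1C_0)\bigr)\ts\cO_S(k_1C_0)\to E.\]
Over a dense open $U\sbe C$ one has $R^1p_*=0$ together with cohomology-and-base-change, so for $x\in U$ this map restricts on $S_x$ to the evaluation $H^0(E_x(-k_1))\ts\cO_\pp\to E_x(-k_1)$ (twisted back by $k_1$), whose image is exactly the summand $\cO_\pp(k_1)^{r_1}$; as the generic quotient is already locally free, saturation does not alter the generic fiber. Thus $F_1$ is torsion free with generic fiber $\cO_\pp(k_1)^{r_1}$, while $E/F_1$ is torsion free with generic fiber $\bop_{i\ge2}\cO_\pp(k_i)^{r_i}$. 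Applying the construction to $E/F_1$ and pulling back to $E$ produces $F_2$, and induction on $n$ yields the filtration with $F_i/F_{i-1}$ of generic fiber $\cO_\pp(k_i)^{r_i}$.

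For uniqueness I would argue fiberwise and then spread out. Because $\Hom(\cO_\pp(k_1),\cO_\pp(k_j))=0$ for $j>1$, any inclusion $\cO_\pp(k_1)^{r_1}\emb E_x$ with locally free cokernel is the inclusion of the top summand; hence any saturated $F_1'\sbe E$ satisfying the fiber hypotheses agrees with $F_1$ on $E_x$ for generic $x$, that is, on a dense open subset of $S$. If $F_1,F_1'$ are two saturated subsheaves of the torsion free sheaf $E$ agreeing on a dense open set, then the saturation $F''$ of $F_1+F_1'$ contains both, and $F''/F_1\sbe E/F_1$ is a torsion subsheaf of a torsion free sheaf, hence $0$; so $F_1=F''=F_1'$. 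Uniqueness of the full filtration follows by induction. Finally, if $E$ is locally free then each $F_i$ is the kernel of the surjection $E\to E/F_i$ from a locally free sheaf onto a torsion free sheaf, hence reflexive, and since $S$ is a smooth surface reflexive sheaves are locally free, so every $F_i$ is locally free.

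The main obstacle is the globalization step: verifying that the image of the adjunction morphism, and its saturation, has the predicted generic fiber $\cO_\pp(k_1)^{r_1}$ and that the quotient remains torsion free. This is precisely where the base-change behaviour of $p_*$ over the locus of good fibers is essential, and care is needed because the pushforward is only as well behaved as one wants over the dense open $U\sbe C$, so all the generic-fiber identities must be read off there rather than over all of $C$.
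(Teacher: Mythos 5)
Your proposal is correct, but it takes a genuinely different route from the paper. The paper's proof is three lines: it takes the Harder--Narasimhan filtration of $E$ for slope $f$-stability (having noted earlier that HN filtrations exist and are unique for slope stability with respect to a nef divisor) and identifies its graded pieces using Lemma~\ref{lm:f sst}, which says a torsion free sheaf is slope $f$-semistable iff its generic fiber is of the form $\cO_\pp(k)^r$. You instead re-derive the relative HN filtration by hand: you construct the maximal destabilizing subsheaf explicitly as the saturation of the image of $p^*p_*(E(-k_1C_0))\ts\cO_S(k_1C_0)\to E$, check its generic fiber via base change, and prove uniqueness by a generic-fiber-plus-saturation argument. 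What you gain is self-containedness (no appeal to HN theory for nef polarizations) and an explicit description of $F_1$ close in spirit to the paper's own Lemmas~\ref{lm:negative2} and \ref{lm:negative1}; what the paper gains is brevity and reuse of Lemma~\ref{lm:f sst}, which it needs anyway (note that $\mu_f(F)=\frac{f\cdot c_1(F)}{\rk F}$ is exactly the generic fiberwise slope, so the two filtrations coincide). Two small repairs: (i) $R^1p_*(E(-k_1C_0))$ need not vanish generically, since the negative summands of the fiber contribute $H^1$; what you actually need is Grauert's theorem applied to the constancy of $h^0(E_x(-k_1))=r_1$ on a dense open subset of $C$, which identifies the fibers of $p_*$ with $H^0$ and shows the evaluation map has image the top summand. (ii) In the uniqueness and saturation steps it is cleaner to restrict to the fiber over the generic point of $C$ (a projective line over the function field), so that ``agreeing on a dense open subset of $S$'' and ``the torsion of $E/G$ is vertical'' become literal statements about a single exact restriction; your argument then closes correctly. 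Your observation that for locally free $E$ each $F_i$ is the kernel of a surjection from a locally free sheaf onto a torsion free sheaf, hence reflexive and therefore locally free on the smooth surface $S$, supplies a detail the paper leaves implicit.
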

\begin{proof}
Consider the Harder-Narasimhan filtration of $E$ with respect to the slope $f$-stability. The assertion of the theorem follows from the uniqueness of the Harder-Narasimhan filtration and the fact that a rank $r$ torsion free sheaf over $S$ is slope $f$-semistable if and only if its generic fiber is of the form $\cO_\pp(k)^r$ for some $k\in\bZ$. The last statement will be proved in Lemma \ref{lm:f sst}.
\end{proof}

In view of the last theorem, the classification of torsion free sheaves over the ruled surface $S$ is reduced to the classification of $f$-semistable sheaves and extensions between them. We will count $f$-semistable sheaves in the next sections.

\begin{lemma}
Let $E$ be a torsion free sheaf over $S$. Then $p_*E$ is a locally free sheaf.
\end{lemma}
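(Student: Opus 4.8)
The plan is to reduce the statement to torsion-freeness on the base curve. Since $p$ is projective and $E$ is coherent, $p_*E$ is a coherent sheaf on $C$. As $C$ is a smooth curve, each local ring $\cO_{C,x}$ is a discrete valuation ring, and a finitely generated torsion-free module over a DVR is free; hence $p_*E$ is locally free if and only if it has no torsion. So it suffices to prove that the torsion subsheaf $\cT\subset p_*E$ vanishes.

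To establish this I would argue downstairs on $S$. Recall that for open $U\subseteq C$ one has $(p_*E)(U)=E(p^{-1}(U))$. Let $s$ be a local section of $\cT$; after shrinking $U$ around a point, there is a nonzero $t\in\cO_C(U)$ with $t\cdot s=0$, that is, $(p^*t)\cdot s=0$ in $E(p^{-1}(U))$. The key observation is that $S$ is an integral scheme (it is a smooth projective surface, being a projective bundle over the smooth curve $C$) and that $p$ is dominant, so the pullback $p^*t$ of the nonzero function $t$ is again a nonzero regular function on $p^{-1}(U)$, hence a non-zero-divisor in $\cO_S$. Since $E$ is torsion-free, multiplication by a non-zero-divisor is injective on $E$ (equivalently, $E$ injects into $E\otimes_{\cO_S}K(S)$, on which $p^*t$ acts invertibly). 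Therefore $s=0$, so $\cT=0$.

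The argument is essentially routine; the only point requiring care is the passage from a torsion relation on $C$ to a relation on $S$, namely verifying that a nonzero function on an open subset of $C$ pulls back to a non-zero-divisor on the corresponding open subset of $S$. This is exactly where integrality of the ruled surface $S$ is used. Once $\cT=0$ is in hand, local freeness of $p_*E$ follows from the structure of finitely generated torsion-free modules over the discrete valuation rings $\cO_{C,x}$.
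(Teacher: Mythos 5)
Your proof is correct and follows essentially the same route as the paper: both arguments reduce the claim to showing that $p_*E$ has no torsion (local freeness then being automatic over the DVRs $\cO_{C,x}$), and both derive this from the torsion-freeness of $E$ by pulling the torsion phenomenon back to $S$. The paper phrases this via adjunction---a nonzero map $Q\to p_*E$ from a zero-dimensional sheaf would give a nonzero map $p^*Q\to E$ with $p^*Q$ of dimension at most one---whereas you argue directly on sections using that $p^*t$ is a non-zero-divisor on the integral scheme $S$; the two formulations are interchangeable.
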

\begin{proof}
Assume that $p_*E$ has a torsion. Then there exists a nonzero morphism $Q\to p_*E$, where $Q$ is a sheaf having dimension zero. This implies that there exists a nonzero morphism $p^*Q\to E$. But $p^*Q$ has dimension at most one, while $E$ is torsion free.
\end{proof}

\begin{lemma}
\label{lm:f sst}
A torsion free sheaf $E$ over $S$ is slope $f$-semistable if and only if a generic fiber of $E$ along $S\to C$ is semistable.
\end{lemma}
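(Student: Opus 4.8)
The plan is to convert slope $f$-semistability of $E$ into a statement about subbundles of its generic fibre, using that the $f$-slope of any subsheaf is computed on a generic fibre. Concretely, for a generic closed point $x\in C$ the remark $\deg E|_X=X\cdot c_1(E)$, applied with $X=S_x$ and using $f^2=0$, gives $\deg E_x=f\cdot c_1(E)$, so that
\[\mu_f(E)=\frac{f\cdot c_1(E)}{\rk E}=\frac{\deg E_x}{\rk E}=\mu(E_x),\]
and likewise $\mu_f(G)=\mu(G_x)$ for every subsheaf $G\subset E$ and generic $x$. I would also record two reductions. First, replacing a subsheaf $G$ by its saturation $\overline{G}$ does not decrease the $f$-slope: $\overline{G}/G$ is torsion of dimension at most one, hence has effective first Chern class, and since $f$ is nef and $\rk\overline{G}=\rk G$ we get $\mu_f(\overline{G})\ge\mu_f(G)$. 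Thus semistability may be tested on saturated subsheaves, for which $E/\overline{G}$ is torsion free and $\overline{G}_x\subset E_x$ is a genuine subbundle for generic $x$. Second, the generic fibre type $\al\in\pr$ is constant on a dense open, and a bundle $\cO_\pp^\al$ on $\bP^1$ is semistable precisely when $\supp\al$ consists of a single integer.

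For the implication ``$\Leftarrow$'' assume the generic fibre is semistable, say $E_x\iso\cO_\pp(k)^r$ with $r=\rk E$, so $\mu_f(E)=k$. Given a proper nonzero subsheaf $F\subset E$, I pass to its saturation so that $F_x$ becomes a subbundle of $\cO_\pp(k)^r$ for generic $x$. Since $\cO_\pp(k)^r$ is semistable of slope $k$, every subbundle has slope at most $k$, whence $\mu_f(F)=\mu(F_x)\le k=\mu_f(E)$. Therefore $E$ is slope $f$-semistable.

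For the implication ``$\Rightarrow$'' I argue by contraposition, so assume the generic fibre is not semistable, i.e. $\supp\al$ contains more than one integer. I would pass to the generic point $\eta=\Spec K(C)$, where $E_\eta$ is a vector bundle on $S_\eta\iso\bP^1_{K(C)}$ whose Harder--Narasimhan filtration is canonical and hence defined over $K(C)$. Let $G_\eta\subset E_\eta$ be its maximal destabilising subbundle, so $\mu(G_\eta)>\mu(E_\eta)$. Spreading $G_\eta$ out to a subsheaf over $p\inv(U)$ for some nonempty open $U\subset C$ and letting $G\subset E$ be the subsheaf of sections of $E$ that restrict into it, I obtain a proper nonzero torsion free subsheaf whose generic fibre is again $G_\eta$. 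Then $\mu_f(G)=\mu(G_x)=\mu(G_\eta)>\mu(E_\eta)=\mu_f(E)$, so $G$ destabilises $E$.

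The main obstacle is this last ``spreading out'' step: one must extend the maximal destabilising subbundle, which a priori exists only over the generic point of $C$, to an honest coherent subsheaf $G\subset E$ on all of $S$ with the prescribed generic fibre. This is precisely the relative Harder--Narasimhan construction for the family of curves $p:S\to C$, and verifying that the destabilising subobject extends over an open of the base and that its closure in $E$ is coherent with generic fibre $G_\eta$ is the only point requiring care. Granting it, the slope identity $\mu_f(-)=\mu((-)_\eta)$ transports semistability back and forth between $E$ and its generic fibre, yielding both implications.
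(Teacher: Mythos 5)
Your ``if'' direction coincides in substance with the paper's: the paper tests slope semistability on torsion free quotients, you on saturated subsheaves, and your saturation inequality (using that $f$ is nef and that $c_1(\ub G/G)$ is effective for the torsion sheaf $\ub G/G$ of dimension at most one) is correct. For the ``only if'' direction you take a genuinely different route. The paper twists $E$ by $\cO_S(-mC_0)$ so that the top Harder--Narasimhan slope of the generic fibre becomes $0$, notes that $G=p_*E$ is then a nonzero locally free sheaf on $C$ whose pullback $p^*G$ is slope $f$-semistable of slope $0$ by the direction already proved, and derives a contradiction because the nonzero adjunction map $p^*G\to E$ would be a morphism between $f$-semistable sheaves of strictly decreasing slope, since $\mu_f(E)=\nn\al/\n\al<0$. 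That argument is self-contained given the earlier lemma that $p_*E$ is locally free, and it never has to extend anything from the generic fibre of $p$. Your argument instead spreads out the maximal destabilising subbundle of $E_{\eta}$ over $\bP^1_{K(C)}$ to a coherent subsheaf $G\subset E$; this is the standard relative Harder--Narasimhan construction and it does go through (extend $G_{\eta}$ to $G_U\subset E|_{p^{-1}(U)}$ over a nonempty open $U\subset C$, then intersect $E$ with the pushforward of $G_U$ inside the pushforward of $E|_{p^{-1}(U)}$ to obtain a coherent subsheaf of $E$ with generic fibre $G_{\eta}$; one should also note that the HN filtration of a bundle on $\pp$ over the non-closed field $K(C)$ is already defined over $K(C)$, which is immediate from Grothendieck's splitting theorem). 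So your proof is correct once this standard but nontrivial step is actually carried out; what the paper's $p_*E$ trick buys is a complete avoidance of spreading out, at the price of being tailored to the ruled situation, while your route is the one that generalises to arbitrary fibrations.
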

\begin{proof}
Assume that a generic fiber of $E$ is semistable. Let $E\to F$ be a torsion free quotient. It induces a surjective morphism of fibers $E_x\to F_x$ for any $x\in C$. By the semistability of a generic fiber of $E$ we obtain (for generic~$x$)
\[\frac{c_1(E)\cdot f}{\rk E}=\frac{\deg E_x}{\rk E_x}\le\frac{\deg F_x}{\rk F_x}=\frac{c_1(F)\cdot f}{\rk F}.\]
Therefore $E$ is slope $f$-semistable.

Let $E$ be slope $f$-semistable and let the generic fiber of $E$ along $S\to C$ be isomorphic to $\cO^\al$ for some $\al\in\pr$.
Assume that $\cO^\al$ is not semistable, that is, $m=\max_\al>\min_\al$. Tensoring $E$ with $\cO_S(-mC_0)$, we can assume that $\max_\al=0$. Let $k=\al_0$.
Then $G=p_*E$ is a locally free sheaf of rank $k$ and there is a natural nonzero morphism $p^*G\to E$. We have seen that $p^*G$ is $f$-semistable. Moreover
\[\frac{c_1(p^*G)\cdot f}{\rk G}=0>\frac{\nn\al}{\n\al}=\frac{c_1(E)\cdot f}{\rk E}.\]
This implies that there are no nonzero morphisms $p^*G\to E$.
\end{proof}

\subsection{Properties of 
\texorpdfstring{$f$}{f}%
-semistable sheaves}
If $E$ is a rank $r$ slope $f$-semistable sheaf then its generic fiber is isomorphic to $\cO_\pp(k)^r$ for some $k\in\bZ$. We have $c_1(E)\cdot f=kr$. The sheaf $E\ts\cO_S(-kC_0)$ is again $f$-semistable and its generic fiber is isomorphic to $\cO_\pp^r$. In this section we will study such sheaves. 

\begin{lemma}
Let $E$ be a locally free sheaf and assume that the set $U\subset C$ of points $x\in C$ such that $E_x\iso\cO^r_\pp$ is open in $C$. Then the natural morphism $p^*p_*E\to E$ is bijective over $p\inv(U)$ and is a monomorphism.
\end{lemma}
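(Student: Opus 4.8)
The plan is to study the counit $\phi\colon p^*p_*E\to E$ of the adjunction $(p^*,p_*)$, to show it is an isomorphism on the fibres lying over $U$, and then to spread this out: first to an isomorphism over $p\inv(U)$, and finally to injectivity on all of $S$. Note first that $U\ne\es$: in the setting of this subsection the generic fibre of $E$ is $\cO^r_\pp$, so the generic point of $C$ lies in $U$, and hence $U$ is a dense open subset of the irreducible curve $C$.

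First I would work over $U$ and invoke cohomology and base change. For $x\in U$ we have $E_x\iso\cO^r_\pp$, so $h^0(S_x,E_x)=r$ is constant and $h^1(S_x,E_x)=0$. By the earlier lemma $p_*E$ is locally free, and over $U$ Grauert's criterion shows it has rank $r$ and that its formation commutes with base change; thus the fibre of $p_*E$ at $x$ is canonically $H^0(S_x,E_x)$. Restricting $\phi$ to a fibre $S_x$ therefore identifies $\phi|_{S_x}$ with the evaluation morphism $H^0(S_x,E_x)\ts\cO_{S_x}\to E_x$, and since $E_x\iso\cO^r_\pp$ this evaluation is an isomorphism of sheaves on $S_x$.

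Next I would upgrade this to an isomorphism over $p\inv(U)$. Both $p^*p_*E$ and $E$ are locally free of rank $r$ there. Because pulling back to the fibre $S_x$ and then to a point $s\in S_x$ is the same as pulling back directly to $s$, the fact that $\phi|_{S_x}$ is an isomorphism gives that $\phi\ts k(s)$ is bijective for every closed point $s\in p\inv(U)$. A morphism of locally free sheaves of equal rank that is bijective on every fibre is an isomorphism (Nakayama), so $\phi$ is bijective over $p\inv(U)$, which is the first assertion.

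Finally, for injectivity I would examine $K=\ker\phi$. Since $p_*E$ is locally free, so is $p^*p_*E$, and hence $K$ is a subsheaf of a locally free sheaf on the integral surface $S$; in particular $K$ is torsion free. We have just seen that $K$ vanishes on the dense open set $p\inv(U)$, so its stalk at the generic point of $S$ is zero, and a torsion-free sheaf with vanishing generic stalk is zero. Thus $K=0$ and $\phi$ is a monomorphism. I expect the base-change step to be the main obstacle: one must confirm the vanishing of $h^1$ and the local constancy of $h^0$ over $U$, since it is exactly this that makes $p_*E$ of rank $r$ over $U$, legitimizes the identification of $\phi|_{S_x}$ with evaluation, and thereby feeds both assertions.
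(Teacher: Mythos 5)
Your argument is correct and follows essentially the same route as the paper: Grauert's theorem identifies the fibres of $p_*E$ over $U$ with $H^0(S_x,E_x)$, the restriction of the counit to a fibre $S_x$ is then an isomorphism $\cO^r_\pp\to\cO^r_\pp$, and injectivity everywhere follows because the kernel is a subsheaf of a locally free sheaf supported in dimension at most one. The only cosmetic difference is that you phrase the fibrewise map as the evaluation morphism for $E$ alone, whereas the paper applies Grauert to both $E$ and $p^*p_*E$ and uses that $p_*$ of the counit is an isomorphism; both reduce to the same check.
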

\begin{proof}
Let $F=p^*p_*E$ and let $f:F\to E$ be the natural morphism. It induces an isomorphism $p_*F\to p_*E$ and a morphism of fibers $F_x\to E_x$ for any $x\in C$. By the Grauert theorem \cite[III.12.9]{hartshorne_algebraic}, there are isomorhisms
\[
p_*E\ts\bk(x)\iso H^0(S_x,E_x),\qquad
p_*F\ts\bk(x)\iso H^0(S_x,F_x) 
\]
for any $x\in U$. This implies that the maps
\[H^0(S_x,F_x)\to H^0(S_x,E_x)\]
are bijective for $x\in U$ and therefore also the maps of fibers \[F_x\iso\cO^r\to\cO^r\iso E_x\]
are bijective for $x\in U$. This implies that $f$ is surjective over $p\inv(U)$. Therefore $\ker f$ is at most one-dimensional. But $F$ is locally free, hence $\ker f=0$. 
\end{proof}

\begin{lemma}
\label{lm:negative2}
Let $E$ be a locally free sheaf with a generic fiber isomorphic to $\cO^r$. Then $E/p^*p_*E$ is a direct sum of negative sheaves over fibers of $S\to C$.
\end{lemma}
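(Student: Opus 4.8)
The statement to prove is Lemma~\ref{lm:negative2}: if $E$ is locally free with generic fiber $\cO^r_\pp$, then $E/p^*p_*E$ is a direct sum of negative sheaves supported on fibers of $p:S\to C$. By the previous lemma, the natural morphism $\vi:p^*p_*E\to E$ is a monomorphism and is bijective over $p\inv(U)$, where $U\subset C$ is the open set where $E_x\iso\cO^r_\pp$. Hence the quotient $Q=E/p^*p_*E$ is supported over the complement $C\ms U$, a finite set of points, so $Q$ is supported on finitely many fibers $S_x$, $x\in C\ms U$. The content of the lemma is twofold: first that $Q$ decomposes as a direct sum of sheaves, one supported on each bad fiber, and second that each summand is \emph{negative} in the sense of Section~\ref{sec:negative}.

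\textbf{Plan of proof.} Since $Q$ has support on the finite set of fibers over $C\ms U$, and these fibers are disjoint, $Q$ automatically splits as $Q\iso\bigoplus_{x\in C\ms U}Q_x$ with $Q_x\in\Coh_{S_x}S$; this is the standard decomposition of a sheaf according to the connected components of its (finite) support along the base. So the real work is to show that each $Q_x$ is negative, i.e.\ lies in $\cA^{<0}$ for $\cA=\Coh_{S_x}S$, equivalently that $Q_x$ admits a filtration whose quotients are line bundles over $S_x\iso\pp$ of negative degree. By the characterization at the end of Section~\ref{sec:negative}, it suffices to show that $Q_x$ contains no subsheaf (or has no quotient) that is a line bundle over $S_x$ of nonnegative degree, or more directly, to analyze the restriction sequence at $x$.

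\textbf{The key local computation.} Fix a bad point $x$ and set $F=p^*p_*E$. I would restrict the short exact sequence $0\to F\to E\to Q\to 0$ to the fiber $S_x$ and use the right-exactness of restriction together with a $\Tor$ term. Because $F=p^*p_*E$ is pulled back from $C$, its fiber $F_x=p^*p_*E|_{S_x}\iso\cO_\pp^r$ is trivial, coming from $p_*E\ts\bk(x)$ which surjects onto $H^0(S_x,E_x)$ by Grauert. The crucial point is that over the bad fiber $E_x\iso\cO^\al$ is a nontrivial bundle of degree $0$ and rank $r$ (degree $0$ because $f\cdot c_1(E)=0$, as computed in the remarks of Section~\ref{sec:negative}), so $E_x$ has some summands of positive and some of negative degree while remaining balanced in total degree. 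The induced map $F_x\to E_x$ factors through the global sections, so its image lands in the subsheaf of $E_x$ generated by $H^0(S_x,E_x)$, i.e.\ the sum of the nonnegative-degree summands. Taking cokernels fiberwise, I expect $Q_x$ to be built from the negative-degree summands of $E_x$ (possibly with skyscraper corrections), hence negative.

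\textbf{Main obstacle.} The delicate step is controlling the $\Tor$ contribution when restricting to $S_x$ and showing that the cokernel $Q_x$ really lies in $\cA^{<0}$ rather than merely being supported on $S_x$. Restriction to $S_x$ is not exact, so from $0\to F\to E\to Q\to0$ one gets $\Tor_1(Q,\cO_{S_x})\to F_x\to E_x\to Q|_{S_x}\to 0$, and I must rule out nonnegative-degree pieces appearing in $Q$. I would argue this by the torsion-pair/semistability machinery: any nonnegative-slope subobject of $Q_x$ in $\Coh_{S_x}S$ would produce, via the generic-fiber triviality and the fact that $F\to E$ is an isomorphism on a neighborhood minus the fiber, a destabilizing map contradicting that $E_x$ has a trivial subsheaf image of full rank generated by global sections. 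Concretely, since $\cA^{<0}$ is closed under subobjects (noted in Section~\ref{sec:negative}), it suffices to produce \emph{one} negative filtration of $Q_x$, and the fiberwise structure of $E_x=\cO^\al$ with its canonical splitting into nonnegative and negative parts supplies exactly such a filtration. Verifying that the skyscraper/$\Tor$ terms do not inject a nonnegative line bundle into $Q_x$ is where the careful diagram chase must be carried out.
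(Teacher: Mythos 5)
Your reduction of the statement to the negativity of each summand $Q_x$ is correct as far as it goes (the splitting of $Q$ over the disjoint bad fibers is indeed automatic), but the second and essential half of the argument is left as a sketch with an unresolved hole, and the sketch itself points at a real problem. You say you expect $Q_x$ to be ``built from the negative-degree summands of $E_x$ (possibly with skyscraper corrections)'' --- but a skyscraper correction would \emph{destroy} negativity: a dimension-zero sheaf has $r=0$ and $d=\hi>0$, so it lies in $\cA^{\ge0}$, and by Lemma \ref{lm:negative1} a negative sheaf can contain no such subsheaf. So ruling out the skyscraper/$\Tor_1(Q,\cO_{S_x})$ contributions is not a technical afterthought; it is the entire content of the lemma, and you explicitly defer it (``this is where the careful diagram chase must be carried out''). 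There is a further structural issue with the restriction approach: $Q_x$ is in general supported on a \emph{thickened} fiber, so it is an object of $\Coh_{S_x}S$ rather than a sheaf on $S_x$, and the restriction $Q\ts\cO_{S_x}$ only sees its top layer; a fiberwise analysis of $F_x\to E_x$ cannot by itself certify a negative filtration of all of $Q_x$.

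The paper avoids all of this with a two-line argument you should compare against. Since $F=p^*p_*E$ is pulled back from $C$, the projection formula gives $R^1p_*F\iso R^1p_*(\cO_S)\ts p_*E=0$ and $p_*F\iso p_*E$, so the long exact sequence of $0\to F\to E\to Q\to0$ under $p_*$ forces $p_*Q=0$. Lemma \ref{lm:negative1} (negativity of a fiber-supported sheaf is \emph{equivalent} to vanishing of its pushforward) then finishes the proof, with no restriction to fibers, no $\Tor$ terms, and no case analysis of $E_x\iso\cO^\al$. The moral is that the characterization $p_*Q=0$ is exactly the global reformulation of ``no nonnegative subobject'' that your local approach is struggling to verify by hand; if you want to salvage your route, the cleanest fix is to prove $p_*Q_x=0$ directly rather than chasing the restricted sequence.
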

\begin{proof}
Let $F=p^*p_*E$. Then $p_*F=p_*E$ and, by the projection formula, \[R^1p_*F=R^1p_*(\cO_S\ts p^*(p_*E))\iso R^1p_*(\cO_S)\ts p_*E=0.\]
An exact sequence
\[0\to F\to E\to Q\to0\]
induces a long exact sequence
\[0\to p_*F\to p_*E\to p_*Q\to R^1p_*F=0.\]
This implies that $p_*Q=0$ and, by Lemma \ref{lm:negative1}, $Q$ is a direct sum of negative sheaves over fibers of $S\to C$.
\end{proof}

\begin{lemma}
\label{lm:negative1}
Let $F$ be a sheaf over $S$ with a support contained in a fiber $S_x$. Then $F$ is negative if and only if $p_*F=0$.
\end{lemma}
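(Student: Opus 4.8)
The plan is to identify $p_*F$ with a skyscraper sheaf of global sections and thereby reduce both implications to an elementary statement about $H^0$ on \pp. Since $F$ is supported on the single fiber $S_x=p\inv(x)$, the pushforward $p_*F$ is the skyscraper sheaf at $x\in C$ whose stalk is $\Gamma(S,F)=\Hom(\cO_S,F)$: for an open $V\subset C$ containing $x$ one has $\supp F\subset p\inv(V)$, and as $F$ is the extension by zero of its restriction to $p\inv(V)$ we get $(p_*F)(V)=\Gamma(p\inv(V),F)=\Gamma(S,F)$, while $(p_*F)(V)=0$ when $x\notin V$. Consequently $p_*F=0$ if and only if $\Gamma(S,F)=0$, and it remains to prove that $F$ is negative if and only if it has no nonzero global sections.

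For the forward implication I would argue directly from the definition. If $F$ is negative it admits a filtration $0=F_0\subset\dots\subset F_m=F$ whose quotients $F_i/F_{i-1}$ are line bundles $\cO_X(d_i)$ over $X\iso\pp$ with $d_i<0$. Each such quotient has $\Gamma(S,\cO_X(d_i))=H^0(\pp,\cO(d_i))=0$, and since $\Gamma(S,-)$ (equivalently $p_*$) is left exact, an induction on the length of the filtration using $0\to F_{i-1}\to F_i\to F_i/F_{i-1}\to0$ yields $\Gamma(S,F)=0$, hence $p_*F=0$.

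For the converse I would invoke Proposition \ref{pr:filt by line bundles}, which supplies a filtration $0\subset F_0\subset F_1\subset\dots\subset F_r=F$ with $F_0$ of dimension zero and with quotients $F_i/F_{i-1}$ line bundles over $X$ of non-increasing degrees. Assume $p_*F=0$, that is $\Gamma(S,F)=0$. Because every subsheaf $G\subset F$ gives an inclusion $\Gamma(S,G)\hookrightarrow\Gamma(S,F)=0$, the dimension-zero sheaf $F_0$ must vanish (a nonzero dimension-zero sheaf has global sections equal to its length), so $F_1=F_1/F_0$ is itself a line bundle $\cO_X(d_1)$ over $X$ and a subsheaf of $F$; thus $H^0(\pp,\cO(d_1))=0$, i.e.\ $d_1<0$. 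Since the degrees of the successive quotients are non-increasing, $d_1$ is the largest among them, so every $d_i<0$; together with $F_0=0$ this exhibits $F$ as an iterated extension of negative line bundles over $X$, whence $F$ is negative by definition.

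The argument is short because the structural work has already been done in Proposition \ref{pr:filt by line bundles}; the only points requiring care are the identification of $p_*F$ with the global-sections skyscraper, which rests on $F$ being supported set-theoretically inside the fiber, and the observation that passing to a subsheaf only decreases global sections. The main obstacle, such as it is, lies in correctly exploiting the non-increasing degree ordering in the filtration: it is exactly this monotonicity that lets the vanishing of $H^0$ of the topmost quotient $F_1/F_0$ propagate to all lower quotients, and without it one could only conclude that a single quotient is negative rather than all of them.
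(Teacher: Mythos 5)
Your proof is correct and follows essentially the same route as the paper: the forward direction uses the defining filtration by negative line bundles together with left exactness of $p_*$, and the converse rests on Proposition \ref{pr:filt by line bundles} combined with the fact that a subsheaf of $F$ injects its (co)sections into those of $F$. The only cosmetic difference is that you argue the converse directly from the filtration (forcing $F_0=0$ and $d_1<0$, then using monotonicity of the degrees), whereas the paper argues by contradiction, extracting a single offending subsheaf $G$ with $p_*G\ne0$; your preliminary identification of $p_*F$ with the skyscraper of global sections is a harmless clarification not spelled out in the paper.
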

\begin{proof}
If $F$ is negative, then it has a filtration with quotients being negative line bundles over $S_x$. This implies that $p_*F=0$.
Conversely, assume that $p_*F=0$ and $F$ is not negative. 
By Proposition \ref{pr:filt by line bundles}, there exists either a skyscraper $G\subset F$ over $S_x$ or a line bundle $G\subset F$ over $S_x$ with $\deg G\ge0$. In both cases $p_*G\ne0$. This contradicts to $p_*G\subset p_*F=0$.
\end{proof}

\begin{remark}
Let $E$ be a locally free sheaf over $S$ and $G$ be a locally free sheaf over $C$ such that $p^*G\subset E$ and $E/p^*G$ is a direct sum of negative sheaves over fibers of $S\to C$. Then a generic fiber of $E$ is isomorphic to $\cO^r$ and $G\iso p_*E$. We can use this to parametrize locally free sheaves $E$ with a generic fiber isomorphic to $\cO^r$ and with $p_*E\iso G$. 
Applying the duality functor
\[D:D(S)\to D(S),\qquad F\mto\rhom(F,\om_S[2])\]
to the exact sequence
\[0\to p^*G\to E\to Q\to 0\]
we obtain an exact sequence
\[0\to E\dual\ts\om_S\to (p^*G)\dual\ts\om_S\to Q'=\lb\Ext^1(Q,\om_S)\to0.\]
The sheaf $Q'$ is supported on the fibers of $S\to C$ and satisfies the conditions
\[p_*Q'=\lb\Ext^1(R^1p_*Q,\om_C),\qquad R^1p_*Q'=0.\]
Indeed, note that 
$DQ=Q'[1]$, $Rp_*D=DRp_*$, and
\[D(Rp_*Q)=\rhom((R^1p_*Q)[-1],\om_C[1])=\lb\Ext^1(R^1p_*Q,\om_C)[1].\]
Thus, locally free sheaves $E$ as above are parametrized by the quotients $(p^*G)\dual\ts\om_S\to Q'$ (pure of dimension one, to ensure that the kernel is locally free) such that $R^1p_*Q'=0$. One can show, similarly to Lemma \ref{lm:negative1}, that $Q'$ is a direct sum of positive sheaves along fibers (that is, sheaves that are successive extensions of line bundles having positive degrees).
\end{remark}
\section{Counting 
\texorpdfstring{$f$}{f}%
-semistable sheaves}
\label{sec:global}

Let $p:S\to C$ be a ruled surface over a curve $C$ of genus $g$. Given a nef divisor $H$ and a class $\ga=(r,c_1,c_2)$, let $\tf_H(\ga)$ be the moduli stack of slope $H$-semistable torsion free sheaves having rank $r$ and Chern classes $(c_1,c_2)$.
Let $\vb_f(\ga)\subset\tf_f(\ga)$ be the substack of locally free sheaves.
Following Section \ref{sec:tf and lf}, for any $r\ge0$ and $c_1\in\NS(S)$, we define
\eq{\pf_H(r,c_1;t)=\sum_{\overst{c_2\in\bZ}{\ga=(r,c_1,c_2)}}q^{\oh\hi(\ga,\ga)}\mm{\tf_H(\ga)}t^{-\ch_2(\ga)},}
\eq{\pfa_H(r,c_1;t)=\sum_{\overst{c_2\in\bZ}{\ga=(r,c_1,c_2)}}\mm{\tf_H(\ga)}t^{r\De(\ga)}.}
Similarly, we define the series $\pf^\circ_H(r,c_1)$ and $\pfa^\circ_H(r,c_1)$ for locally free sheaves. In this section we will compute the above series for the divisor $H=f$.
We will study first the moduli stack of the form $\vb_f(r,nf,c_2)$ for some $n\in\bZ$.
Note that $c_2=r\De(r,nf,c_2)$ is nonnegative by the Bogomolov inequality \cite[\S3.4]{huybrechts_geometry}, whenever $\vb_f(r,nf,c_2)$ is nonempty.

\begin{theorem}
\label{th:nf}
Let $r>0$ and $n\in\bZ$. Then
\[\pfa^\circ_f(r,nf)=\sum_{c_2}\mm{\vb_f(r,nf,c_2)}t^{c_2}
=\mm{\Bun_{C,r}}\prod_{k\ge1}\prod_{i=1}^{r-1}
\frac{Z_C(q^{rk+i}t^k)}{Z_C(q^{rk-i}t^k)}.\]
\end{theorem}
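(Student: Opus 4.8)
The plan is to reduce the computation of $\pfa^\circ_f(r,nf)$ to the local formula of Theorem~\ref{th:local} by parametrizing all locally free $f$-semistable sheaves with generic fiber $\cO^r_\pp$ in terms of a fixed reference bundle on each fiber. First I would observe that tensoring by $\cO_S(-kC_0)$ identifies $\vb_f(r,nf,c_2)$ with a moduli stack whose objects have generic fiber isomorphic to $\cO^r_\pp$, so we may assume $n$ corresponds to the normalized case with generic fiber $\cO^r_\pp$; the discriminant $r\De$ is invariant under this twist, so the generating series is unchanged. By the structural lemmas of Section~\ref{sec:canonical} (Lemmas~\ref{lm:negative2} and~\ref{lm:negative1} together with the concluding remark), such a sheaf $E$ fits into an exact sequence $0\to p^*G\to E\to Q\to 0$ where $G=p_*E$ is a rank $r$ bundle on $C$ and $Q$ is a direct sum of negative sheaves supported on the fibers of $p$.

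The key step is then to stratify by the isomorphism class $G\in\Bun_{C,r}$ and to recognize that, once $G$ is fixed, the choice of $E$ is exactly a local datum at each fiber. Specifically, for each point $x\in C$ the bundle $E$ is obtained from $p^*G$ by a modification along $S_x$ of the type counted in Theorem~\ref{th:local}, with $F=p^*G$ restricting to $\cO^r_\pp$ on the generic (hence on all but finitely many) fibers. Since $Q$ is supported on finitely many fibers and decomposes as a direct sum over those fibers, and since the local contributions at distinct fibers are independent, the full generating function factors as an Euler product over the points of $C$ of the local series $\pf_{F,X}$. Because $\mm-$ is a motivic measure satisfying the cut-and-paste and product axioms, this Euler product over $C$ is precisely what turns the local factor $\prod_{k\ge1}\prod_{i=1}^{r-1}\frac{1-q^{rk-i}ut^k}{1-q^{rk+i}ut^k}$ into the zeta-function expression $\prod_{k\ge1}\prod_{i=1}^{r-1}\frac{Z_C(q^{rk+i}t^k)}{Z_C(q^{rk-i}t^k)}$, by the defining relation $Z_C(s)=\Exp(\mm C\,s)$ applied termwise.

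I would then account for the automorphisms and the sum over $G$. Integrating the local count $\pf_{F,X}=\frac{1}{\nnaut F}\prod_{k,i}(\cdots)$ over the stack $\Bun_{C,r}$ means summing $\frac{1}{\nnaut G}$ times the local factor over all isomorphism classes $G$, which produces the prefactor $\mm{\Bun_{C,r}}=\sum_{G}\frac{1}{\nnaut G}$. The variable $u$ in Theorem~\ref{th:local} tracks $r(E/F)=n$, i.e. the relative $c_1$ along fibers; since $c_1(E)$ is fixed to $nf$ we specialize $u$ appropriately (effectively setting the weight so that the $u$-grading is absorbed), leaving only the $t$-series in $c_2$. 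Matching $c_2=r\De$ against the exponent bookkeeping from the twist completes the identification of the two sides.

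The main obstacle I expect is justifying rigorously that the global count genuinely factors as an Euler product over fibers. This requires showing that the modification data at distinct fibers are independent and that the decomposition $E/p^*G=\bigoplus_x Q_x$ with each $Q_x$ negative on $S_x$ sets up a bijection (compatible with automorphisms) between global sheaves $E$ and collections of local modifications. One must also verify that the motivic measure behaves multiplicatively over this stratification of $C$ by the support of $Q$—that is, that $\mm-$ of the relevant configuration space is the product of local contributions weighted correctly—which is where the $\la$-ring structure and the symmetric-power axiom $\mm{\Sym^k X}=\si_k(\mm X)$ enter to convert the pointwise local series into the plethystic exponential $Z_C$. Handling the interaction between this factorization and the automorphism groups, so that $\nnaut F$ assembles correctly into $\mm{\Bun_{C,r}}$, is the technically delicate part.
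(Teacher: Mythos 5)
Your proposal is correct and follows essentially the same route as the paper: decompose a semistable $E$ via $p^*G\subset E$ with negative quotient along fibers, invoke the local count of Theorem~\ref{th:local} at each fiber, assemble the fibers into the plethystic power $(\cdot)^{\mm C}$ to produce the $Z_C$ factors, and sum over $G$ to obtain the prefactor $\mm{\Bun_{C,r}}$. The only step you leave implicit is that absorbing the $u$-grading (extracting the coefficient of a fixed $u^n$ after summing over all degrees of $G$, which amounts to setting $u=1$ in the local product) relies on the fact that $\mm{\Bun_{C,r,d}}$ is independent of $d$.
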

\begin{proof}
Let $E$ be a rank $r$ slope $f$-semistable locally free sheaf over $S$. Then $G=p_*E$ is a locally free sheaf over $C$, $p^*G\subset E$, and the quotient $E/p^*G$ is a direct sum of negative sheaves along fibers. Let 
\[\cM^\circ_G(n,c_2)\subset\cM^\circ_f(r,(n+\deg G)f,c_2)\]
be the substack of sheaves $E$ such that $p_*E\iso G$. Then $c_1(E/p^*G)=nf$ and $c_2(E/p^*G)=c_2$.

For any $x\in X$, there exists a unique locally free sheaf $p^*G\subset F\subset E$ such that $\supp E/F\subset S_x$ and $F_x\iso\cO_\pp^r$. The sheaf $E/F$ is negative by Lemma \ref{lm:negative2}. Therefore $E\in A_{F,S_x}(n,c_2)$ for some $n,c_2\ge0$ and we can apply Theorem \ref{th:local} to count such sheaves. Varying the point $x\in C$, we obtain
\begin{multline*}
\pf_G:=\sum_{n,c_2}\mm{\cM^\circ_G(n,c_2)}u^nt^{c_2}
=\frac{1}{\naut G}\left(\prod_{k\ge1}\prod_{i=1}^{r-1}\frac{1-q^{rk-i}ut^k}{1-q^{rk+i}ut^k}\right)^{\mm{C}}\\
=\frac{1}{\naut G}\prod_{k\ge1}\prod_{i=1}^{r-1}
\frac{Z_C(q^{rk+i}ut^k)}{Z_C(q^{rk-i}ut^k)},
\end{multline*}
where we used the plethystic power map from equation \eqref{eq:power} and the formula
\[\rbr{\frac1{1-t}}^{\mm C}=\Exp(t)^{\mm C}=\Exp(\mm Ct)=Z_C(t).\]
Summing up over all rank $r$ vector bundles $G$ over $C$, we obtain 
\begin{multline*}
\label{eq:gl1}
\sum_{n,c_2}\mm{\cM_f^\circ(r,nf,c_2)}u^nt^{c_2}
=\sum_{[G]}\pf_G\cdot u^{\deg G}\\
=\sum_{n\in\bZ}\mm{\Bun_{C,r}}u^n\prod_{k\ge1}\prod_{i=1}^{r-1}
\frac{Z_C(q^{rk+i}ut^k)}{Z_C(q^{rk-i}ut^k)}\\
=\sum_{n\in\bZ}\mm{\Bun_{C,r}}u^n\prod_{k\ge1}\prod_{i=1}^{r-1}
\frac{Z_C(q^{rk+i}t^k)}{Z_C(q^{rk-i}t^k)}.
\end{multline*}
Therefore, for any $n\in\bZ$,
\[\sum_{c_2}\mm{\vb_f(r,nf,c_2)}t^{c_2}
=\mm{\Bun_{C,r}}\prod_{k\ge1}\prod_{i=1}^{r-1}
\frac{Z_C(q^{rk+i}t^k)}{Z_C(q^{rk-i}t^k)}.\]
\end{proof}

\begin{corollary}
\label{cr:pfa formula}
Let $r>0$ and $c_1\in \NS(S)$. 
If $r\ndiv f\cdot c_1$ then $\tf_f(r,c_1,c_2)$ is empty for any $c_2\in\bZ$. Otherwise,
\begin{gather*}
\pfa^\circ_f(r,c_1)
=\mm{\Bun_{C,r}}\prod_{k\ge1}\prod_{i=1}^{r-1}
\frac{Z_C(q^{rk+i}t^k)}{Z_C(q^{rk-i}t^k)},\\
\pfa_f(r,c_1)
=\mm{\Bun_{C,r}}\prod_{k\ge1}\prod_{i=-r}^{r-1}Z_C(q^{rk+i}t^k).
\end{gather*}
\end{corollary}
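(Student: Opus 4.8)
The plan is to deduce the whole statement from Theorem~\ref{th:nf}, from Lemma~\ref{lm:tf and lf}, and from the behaviour of the invariants under tensoring by a line bundle.

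First the emptiness claim. If $E$ is slope $f$-semistable of rank $r$, then by Lemma~\ref{lm:f sst} its generic fiber is a semistable bundle on $\bP^1$, hence isomorphic to $\cO_\pp(k)^r$ for a single $k\in\bZ$. Since $\deg E_x=f\cdot c_1(E)$ for a fiber $S_x\sim f$, we get $f\cdot c_1=kr$, so $r\mid f\cdot c_1$. Thus $\tf_f(r,c_1,c_2)$ is empty whenever $r\ndiv f\cdot c_1$. Now assume $r\mid f\cdot c_1$ and write $c_1=aC_0+bf$, so that $f\cdot c_1=a=rk_0$ for some $k_0\in\bZ$. Tensoring with $\cO_S(-k_0C_0)$ is an autoequivalence preserving rank, torsion-freeness and local-freeness; it shifts $\mu_f$ by the constant $-k_0$ and hence preserves slope $f$-semistability, and it sends $c_1$ to $c_1-rk_0C_0=bf$. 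Because $\De$ is invariant under tensoring by line bundles, it induces for each $c_2$ an isomorphism $\vb_f(r,c_1,c_2)\iso\vb_f(r,bf,c_2')$ with the same value of $r\De$. Summing over $c_2$ gives $\pfa^\circ_f(r,c_1)=\pfa^\circ_f(r,bf)$, and the right-hand side is computed by Theorem~\ref{th:nf} (which is independent of the $f$-coefficient). This establishes the formula for $\pfa^\circ_f(r,c_1)$.

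For the torsion-free series I apply Lemma~\ref{lm:tf and lf}. Take $\fam$ to be the set of locally free slope $f$-semistable sheaves of class $(r,c_1)$ and $\fam'$ the associated torsion-free sheaves $F$ with $F^\refl\in\fam$. A torsion-free sheaf $F$ and its reflexive hull $F^\refl$ have isomorphic generic fibers and equal first Chern class, so by Lemma~\ref{lm:f sst} one is $f$-semistable iff the other is; hence $\fam'$ is exactly the set of torsion-free $f$-semistable sheaves of class $(r,c_1)$. Lemma~\ref{lm:tf and lf} then gives $\pfa_f(r,c_1)=H_r(t)\,\pfa^\circ_f(r,c_1)$, and it remains to simplify this product into the claimed form.

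The simplification rests on the identity $Z_S(t)=Z_C(t)Z_C(qt)$. Since $S\to C$ is a Zariski-locally trivial $\bP^1$-bundle we have $\mm S=(1+q)\mm C$, so $Z_S(t)=\Exp(\mm S\,t)=\Exp(\mm C\,t)\Exp(q\,\mm C\,t)=Z_C(t)Z_C(qt)$, where the last step uses that $q=\mm{\bA^1}$ is a line element. Substituting this into $H_r(t)=\prod_{k\ge1}\prod_{i=1}^rZ_S(q^{kr-i}t^k)$ and multiplying by the $\pfa^\circ_f$ formula, I track, for each fixed $k$, the total multiplicity of each factor $Z_C(q^jt^k)$: the two families coming from $Z_S=Z_C(t)Z_C(qt)$ contribute exponents $j\in[r(k-1),rk-1]$ and $j\in[r(k-1)+1,rk]$, the numerator factors of $\pfa^\circ_f$ contribute $j\in[rk+1,rk+r-1]$, and the denominator factors subtract $j\in[r(k-1)+1,rk-1]$. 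These combine so that every $j$ with $r(k-1)\le j\le rk+r-1$ survives with multiplicity exactly one, i.e. $\prod_{i=-r}^{r-1}Z_C(q^{rk+i}t^k)$, which is the asserted formula.

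The only genuine obstacle is this last exponent bookkeeping: one must verify that the doubling produced by $Z_S=Z_C(t)Z_C(qt)$ exactly fills the gap left by the numerator and denominator of the locally free formula, leaving each surviving factor with multiplicity one. Every other step is formal once Theorem~\ref{th:nf} and Lemma~\ref{lm:tf and lf} are in hand.
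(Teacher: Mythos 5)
Your proposal is correct and follows essentially the same route as the paper: reduce to $c_1=nf$ by tensoring with $\cO_S(-k_0C_0)$ (using invariance of $\De$), invoke Theorem~\ref{th:nf}, and then pass to torsion-free sheaves via Lemma~\ref{lm:tf and lf} together with $Z_S(t)=Z_C(t)Z_C(qt)$; your exponent bookkeeping checks out. The only addition is your explicit observation that $F$ is $f$-semistable iff $F^\refl$ is, which the paper leaves implicit.
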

\begin{proof}
Let $c_1=mC_0+nf$ for some $m,n\in\bZ$.
If $\tf_f(r,c_1,c_2)$ is nonempty then, for any $E\in\tf_f(r,c_1,c_2)$, a general fiber $E_x$ is isomorphic to $\cO_\pp(k)^r$ for some $k\in\bZ$. Therefore
\[m=c_1\cdot f=\deg E_x=kr\]
and $r\mid c_1\cdot f$.
The sheaf $E\ts\cO(-kC_0)$ is slope $f$-semistable and has the first Chern class $nf$ and the discriminant $\De(E)$. Therefore
\[\pfa^\circ_f(r,c_1)=\pfa^\circ_f(r,nf)\]
and the first formula of the statement follows. 
To prove the second formula, we will apply Lemma \ref{lm:tf and lf}.
Note that
\[H_r(t)
=\prod_{k\ge1}\prod_{i=1}^rZ_S(q^{rk-i}t^k)
=\prod_{k\ge1}\prod_{i=1}^rZ_C(q^{rk-i}t^k)Z_C(q^{rk-i+1}t^k)
,\]
where we used the fact that $\mm S=\mm C\mm\pp=\mm C(q+1)$ and therefore
\[Z_S(t)=\Exp(\mm St)=Z_C(t)Z_C(qt).\]
Therefore the generating function for torsion free sheaves is
\begin{multline*}
\pfa^\circ_f(r,c_1;t)\cdot H_r(t)
=\mm{\Bun_{C,r}}\prod_{k\ge1}\prod_{i=1}^{r-1}
\frac{Z_C(q^{rk+i}t^k)}{Z_C(q^{rk-i}t^k)}\cdot H_r(t)\\
=\mm{\Bun_{C,r}}\prod_{k\ge1}\prod_{i=-r}^{r-1}Z_C(q^{rk+i}t^k).
\end{multline*}
\end{proof}

\begin{corollary}
Let $r>0$ and $c_1\in \NS(S)$. 
If $r\ndiv f\cdot c_1$ then $\tf_f(r,c_1,c_2)$ is empty for any $c_2\in\bZ$. Otherwise,
\begin{gather*}
\pf^\circ_f(r,c_1)
=q^{\oh r^2(1-g)}t^{-\frac{c_1^2}{2r}}
\mm{\Bun_{C,r}}\prod_{k\ge1}\prod_{i=1}^{r-1}\frac{Z_C(q^{i}t^k)}{Z_C(q^{i}t^k)},\\
\pf_f(r,c_1)
=q^{\oh r^2(1-g)}t^{-\frac{c_1^2}{2r}}\mm{\Bun_{C,r}}\prod_{k\ge1}\prod_{i=-r}^{r-1}Z_C(q^{i}t^k).
\end{gather*}
\end{corollary}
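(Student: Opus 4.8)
The plan is to deduce both identities directly from the already-established Corollary~\ref{cr:pfa formula} by passing between the two normalizations of the generating series. The emptiness assertion is literally the same as in Corollary~\ref{cr:pfa formula}: if $r\ndiv f\cdot c_1$ then a general fiber of any $E\in\tf_f(r,c_1,c_2)$ would be $\cO_\pp(k)^r$ with $kr=f\cdot c_1$, which is impossible, so $\tf_f(r,c_1,c_2)=\es$ for every $c_2$ and there is nothing to prove. It therefore suffices to treat the case $r\mid f\cdot c_1$.

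The key tool is the relation \eqref{eq:ch and De}, namely $\pf_\fam(t)=q^{\oh r^2\hi(\cO_S)}t^{-\frac{c_1^2}{2r}}\pfa_\fam(q^{-r}t)$, valid for any family $\fam$ of sheaves of fixed rank $r$ and fixed first Chern class $c_1$. I would apply it twice: once with $\fam$ the set of isomorphism classes in $\tf_f(r,c_1,c_2)$ as $c_2$ ranges over $\bZ$, producing $\pf_f(r,c_1)$, and once with the locally free analogue, producing $\pf^\circ_f(r,c_1)$. On a ruled surface we have $\hi(\cO_S)=1-g$, so the prefactor becomes $q^{\oh r^2(1-g)}t^{-\frac{c_1^2}{2r}}$, which already matches the claimed shape.

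It then remains to carry out the substitution $t\mto q^{-r}t$ in the two formulas of Corollary~\ref{cr:pfa formula}. The only computation involved is the effect of this substitution on the arguments of $Z_C$: since $t^k\mto q^{-rk}t^k$, each factor $Z_C(q^{rk+i}t^k)$ becomes $Z_C(q^{rk+i}q^{-rk}t^k)=Z_C(q^{i}t^k)$, i.e. the $q^{rk}$ in the exponent cancels and only the shift $q^{i}$ survives. Applying this to $\pfa_f(r,c_1)=\mm{\Bun_{C,r}}\prod_{k\ge1}\prod_{i=-r}^{r-1}Z_C(q^{rk+i}t^k)$ immediately yields the second formula, and applying it to the ratio formula for $\pfa^\circ_f(r,c_1)$ yields the first, the numerator and denominator arguments becoming $q^{i}t^k$ and $q^{-i}t^k$ respectively.

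There is essentially no serious obstacle here: the entire content is packaged in Corollary~\ref{cr:pfa formula} together with the normalization identity \eqref{eq:ch and De}, and the argument reduces to a one-line bookkeeping of exponents. The only point warranting care is checking that \eqref{eq:ch and De} is invoked for precisely the family it was proved for (fixed $r$ and $c_1$, summed over $c_2$) and that the locally free and torsion free cases are governed by the same identity; both are immediate.
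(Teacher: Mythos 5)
Your proposal is correct and is exactly the (implicit) argument the paper intends: the corollary is stated without proof because it follows immediately from Corollary~\ref{cr:pfa formula} and the normalization identity \eqref{eq:ch and De}, with $\hi(\cO_S)=1-g$ and the substitution $t\mto q^{-r}t$ turning each $Z_C(q^{rk\pm i}t^k)$ into $Z_C(q^{\pm i}t^k)$. Note that your computation also yields the denominator $Z_C(q^{-i}t^k)$ in the locally free formula, which is the correct form (the printed statement's $Z_C(q^{i}t^k)/Z_C(q^{i}t^k)$ is evidently a typo).
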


%

\section{Some counts on
\texorpdfstring{$\bP^1$}{P1}}
\label{sec:counts on P1}

Let \cH be the Hall algebra of the category of vector bundles on $\bP^1$.
For any $r>0$ let $\cH_{r,0}\subset\cH$ be the vector space generated by the isomorphism classes of vector bundles having rank $r$ and degree zero.

\begin{theorem}
\label{th:phi}
For any $r>0$, there exists a unique $\bZ[q]$-linear function
$\vi:\cH_{r,0}\to\bZ[q]\pser{u,t}$ such that $\vi(\cO^r_\pp)=1$ and, for any vector bundles $E,F$ on $\bP^1$ such that $\rk E+\rk F=r$, $\deg E+\deg F=0$ and all summands of $E$ have negative degree, we have
\[\vi([E]\circ[F])=\vi([F]\circ[E])u^{\rk E}t^{-\deg E}.\]
This function satisfies
\begin{equation}
\sum_{\overst{\rk E=r}{\deg E=0}}\vi(E)=\prod_{k\ge1}\prod_{i=1}^{r-1}\frac{1-q^{rk-i}ut^k}{1-q^{rk+i}ut^k}.
\label{eq:vi theorem}
\end{equation}
\end{theorem}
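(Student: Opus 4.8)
The plan is to read Theorem \ref{th:phi} as a triangularity statement: the defining relation, applied to the splitting of a bundle into its negative and non-negative parts, should compute $\vi$ on any basis vector from its values on strictly ``more balanced'' bundles, which both forces uniqueness and lets one telescope the sum \eqref{eq:vi theorem}. First I would order the basis of $\cH_{r,0}$ by the dominance (majorization) order on splitting types $d_1\ge\dots\ge d_r$, $\sum d_j=0$: write $\be\preceq\al$ when the type of $\al$ majorizes that of $\be$, so that $\cO^r_\pp$ is the unique minimum and the set $\{\be:\be\prec\al\}$ is finite. For $\al\neq0$ split $\cO^\al=\cO^{\al^-}\oplus\cO^{\al^{\ge0}}$ into its negative and non-negative summands and set $E=\cO^{\al^-}$, $F=\cO^{\al^{\ge0}}$; then $\rk E+\rk F=r$, $\deg E+\deg F=0$, $E$ is negative, and since $\mx_{\al^-}<0\le\mn_{\al^{\ge0}}$ the first relation of \cite{baumann_hall} gives $[E]\circ[F]=[\cO^\al]$, so that $\vi(\cO^\al)=u^{\rk E}t^{-\deg E}\vi([F]\circ[E])$.

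Next I would expand $[F]\circ[E]=\sum_\be g_{FE}^{\cO^\be}[\cO^\be]$, the sum over middle terms of extensions $0\to E\to X\to F\to0$. Because the split bundle $E\oplus F$ specialises every such extension, its type dominates all others, so $\be\preceq\al$ with equality only for the split term; isolating it gives
\[\vi(\cO^\al)\bigl(1-u^{\rk E}t^{-\deg E}g_{FE}^{\cO^\al}\bigr)=u^{\rk E}t^{-\deg E}\sum_{\be\prec\al}g_{FE}^{\cO^\be}\vi(\cO^\be).\]
Since $\rk E\ge1$, the coefficient $u^{\rk E}t^{-\deg E}$ lies in the maximal ideal $(u,t)$, so the bracket is a unit in $\bZ[q]\pser{u,t}$ and $\vi(\cO^\al)$ is determined by the finitely many smaller values $\vi(\cO^\be)$. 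Well-founded induction with base $\vi(\cO^r_\pp)=1$ then yields uniqueness, and taking this recursion as a definition produces a candidate $\vi$; for existence one must still verify the relation for every admissible $(E,F)$, which I would reduce — via associativity of $\circ$ and the line-bundle relations — to the extreme split case already used (existence is in any case furnished by the geometric realisation of $\vi$ in the proof of Theorem \ref{th:local}).

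Finally, for the evaluation \eqref{eq:vi theorem} I would sum the recursion, the mechanism being transparent at $r=2$. Writing $a_d=\vi(\cO(-d)\oplus\cO(d))$ and feeding the product $[\cO(d)]\circ[\cO(-d)]$ into the relation gives $a_d(1-q^{2d+1}ut^d)=q^{2d-1}(q^2-1)ut^d\sum_{j<d}a_j$; the partial sums $b_d=\sum_{j\le d}a_j$ then collapse to $b_d=b_{d-1}\frac{1-q^{2d-1}ut^d}{1-q^{2d+1}ut^d}$, which telescopes in the limit to $\prod_{k\ge1}\frac{1-q^{2k-1}ut^k}{1-q^{2k+1}ut^k}$, exactly the right-hand side for $r=2$. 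For general $r$ the plan is to run the same computation by peeling off minimal-degree summands and substituting the products $[\cO(n)]\circ[\cO(m)]$, yielding a multi-indexed recursion whose partial sums should again telescope into the stated double product. The hard part will be exactly this bookkeeping: controlling the many mixing terms $g_{FE}^{\cO^\be}$ produced in higher rank and arranging the summation so that the clean rank-$2$ telescoping survives; this combinatorial core is what Section \ref{sec:counts on P1} must carry out.
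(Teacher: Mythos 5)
Your uniqueness argument is correct and is essentially the ``straightforward'' argument the paper leaves implicit: split $\cO^\al$ as $E\oplus F$ with $E$ the negative part, use $[E]\circ[F]=[\cO^\al]$ together with the fact that every non-split middle term of $[F]\circ[E]$ has splitting type strictly dominated by $\al$, and invert $1-u^{\rk E}t^{-\deg E}g_{FE}^{\cO^\al}$ in $\bZ[q]\pser{u,t}$ (legitimate since $\rk E\ge1$ and $\deg E<0$). Deferring existence to the geometric realisation in the proof of Theorem \ref{th:local} is exactly what the paper does, and your $r=2$ computation coincides with the paper's warm-up proposition.

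The genuine gap is the evaluation \eqref{eq:vi theorem} for $r\ge3$, which is the main content of the theorem and which you leave as ``bookkeeping.'' It is not bookkeeping that happens to work out: the rank-$2$ telescoping does not survive naively, and the paper needs two nontrivial inputs that your plan does not identify. First, the right objects to recurse on are not the individual values $\vi(\cO^\al)$ — in your pointwise recursion the pair $(E,F)$ varies with $\al$, so the recursions do not sum to anything recognisable — but the aggregated elements $B_{r',d,n}=\sum[\cO^\al]$ over all $\al$ of fixed rank, degree and minimal twist $>n$. Showing that the skew commutator of $[\cO(n)]$ with $B_{r',d,n}$ is a scalar multiple of $B_{r'+1,d+n,n}$ requires the closed formula of Corollary \ref{cr:surj} for the number of surjections $\cO^\al\to\cO(n)$ with $n$ exceeding all twists of $\al$, proved by an inversion argument in Section \ref{sec:quotients on pp}. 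Second, after stratifying the bundles of minimal twist $-n$ by the multiplicity $k=\al_{-n}$ of $\cO(-n)$, the resulting sum over $k=0,\dots,r-1$ is a terminating ${}_2\phi_1$ series, and identifying it with the single ratio $\prod_{i=1}^{r-1}\frac{1-zq^{-i}}{1-zq^{i}}$ (where $z=q^{nr}ut^n$) requires the Heine--Gauss summation formula; only after this collapse does the recursion in $n$ telescope to the stated double product. Without these two steps the ``multi-indexed recursion'' does not close, so the proposal establishes the theorem only for $r\le2$.
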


Existence of the function \vi follows from the proof of Theorem \ref{th:local}. Uniqueness is straightforward. The rest of this section is devoted to the proof of equation \eqref{eq:vi theorem}. Before starting the proof in full generality let us consider the case $r=2$.
The following result is equivalent to the computation of Yoshioka \cite{yoshioka_bettia}, although the usage of Hall algebras is novel.

\begin{proposition}
For $r=2$, we have
\[\sum_{\overst{\rk E=r}{\deg E=0}}\vi(E)
=\prod_{k\ge1}\frac{1-q^{2k-1}ut^k}{1-q^{2k+1}ut^k}.\]
\end{proposition}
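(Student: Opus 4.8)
The plan is to solve a recursion on the finitely-described basis of $\cH_{2,0}$. By Grothendieck's splitting theorem every rank-$2$ degree-$0$ bundle is $\cO(a)\oplus\cO(-a)$ for a unique $a\ge0$, so the sum in question is $\sum_{a\ge0}b_a$, where $b_a:=\vi(\cO(a)\oplus\cO(-a))$ and $b_0=\vi(\cO^2_\pp)=1$. To produce relations among the $b_a$, I will feed the defining functional equation of $\vi$ the pair $E=\cO(-m)$, $F=\cO(m)$ with $m\ge1$: here $E$ has its unique summand of negative degree, $\rk E=1$, $\deg E=-m$, and $\rk E+\rk F=2$, $\deg E+\deg F=0$, so the hypotheses of Theorem \ref{th:phi} are met and we get $\vi([\cO(-m)]\circ[\cO(m)])=\vi([\cO(m)]\circ[\cO(-m)])\,u\,t^{m}$.

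Next I will evaluate the two Hall products using the structure constants recalled in Section \ref{sec:prelim}. Since $-m<m$, the product $[\cO(-m)]\circ[\cO(m)]$ is the split one, equal to $[\cO(-m)\oplus\cO(m)]$, so its $\vi$-value is simply $b_m$. The opposite product $[\cO(m)]\circ[\cO(-m)]$ is the nontrivial one; with difference of degrees $2m$ the formula from \cite{baumann_hall} gives
\[[\cO(m)]\circ[\cO(-m)]=q^{2m+1}[\cO(-m)\oplus\cO(m)]+q^{2m-1}(q^2-1)\sum_{i=1}^{m}[\cO(-m+i)\oplus\cO(m-i)],\]
and reindexing by $j=m-i$ rewrites the sum as $\sum_{j=0}^{m-1}[\cO(-j)\oplus\cO(j)]$, i.e.\ the terms are exactly the basis bundles $\cO(j)\oplus\cO(-j)$ with $j<m$. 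Crucially, no basis element outside $\{b_0,\dots,b_m\}$ appears, so applying $\vi$ by $\bZ[q]$-linearity and substituting into the functional equation yields the closed recursion $b_m=\big(q^{2m+1}b_m+q^{2m-1}(q^2-1)s_{m-1}\big)ut^m$, where $s_{m-1}=\sum_{j=0}^{m-1}b_j$.

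Finally I will solve this recursion via partial sums. Rearranging gives $b_m(1-q^{2m+1}ut^m)=q^{2m-1}(q^2-1)ut^m\,s_{m-1}$, hence $s_m=s_{m-1}\big(1+\tfrac{q^{2m-1}(q^2-1)ut^m}{1-q^{2m+1}ut^m}\big)$. The bracketed factor simplifies to $\tfrac{1-q^{2m-1}ut^m}{1-q^{2m+1}ut^m}$, so by induction (base $s_0=1$) one obtains $s_m=\prod_{k=1}^{m}\tfrac{1-q^{2k-1}ut^k}{1-q^{2k+1}ut^k}$; since $b_a$ is divisible by $t^a$, the partial sums converge $t$-adically in $\bZ[q]\pser{u,t}$ to $\sum_{a\ge0}b_a=\prod_{k\ge1}\tfrac{1-q^{2k-1}ut^k}{1-q^{2k+1}ut^k}$, which is the claim. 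I expect the only genuine obstacle to be the Hall-algebra bookkeeping in the second paragraph: correctly identifying which ordering gives the split product and verifying that the nontrivial product re-expands only into the $b_j$ with $j\le m$, so that the recursion truly closes; the telescoping identity for the factor is then routine.
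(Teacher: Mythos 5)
Your proposal is correct and follows essentially the same route as the paper: the paper also applies the functional equation to the pair $\cO(m)$, $\cO(-m)$, uses the Baumann--Kassel product formula to get the recursion $a_m=ut^m\bigl(q^{2m+1}a_m+(q^{2m+1}-q^{2m-1})\sum_{k<m}a_k\bigr)$, and telescopes the partial sums to the stated product. The only (harmless) difference is that you spell out the identification of the split product and the $t$-adic convergence, which the paper leaves implicit.
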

\begin{proof}
For any $n>0$,
$$[\cO(n)]\circ[\cO(-n)]=q^{2n+1}[\cO(-n)\oplus\cO(n)]+q^{2n-1}(q^2-1)\sum_{k=0}^{n-1}[\cO(-k)\oplus\cO(k)].$$
Let $a_n=\vi([\cO(-n)\oplus\cO(n)])$ for $n\ge0$. Then, for any $n>0$,
$$a_n=ut^n\vi([\cO(n)]\circ[\cO(-n)])=ut^n\rbr{q^{2n+1}a_n+(q^{2n+1}-q^{2n-1})\sum_{k=0}^{n-1}a_k}.$$
This implies
\[a_n=\frac{ut^n(q^{2n+1}-q^{2n-1})}{1-ut^nq^{2n+1}}\sum_{k=0}^{n-1}a_k\]
and
$$\sum_{k=0}^na_k
=\rbr{1+\frac{ut^n(q^{2n+1}-q^{2n-1})}{1-ut^nq^{2n+1}}}\cdot \sum_{k=0}^{n-1}a_k
=\frac{1-q^{2n-1}ut^n}{1-q^{2n+1}ut^n}\cdot\sum_{k=0}^{n-1}a_k.$$
Therefore
\[\sum_{k=0}^na_k
=\prod_{k=1}^n\frac{1-q^{2k-1}ut^k}{1-q^{2k+1}ut^k}.\]
\end{proof}

\subsection{Counting quotients on 
\texorpdfstring{\pp}{P1}}
\label{sec:quotients on pp}
Let $E$ be a coherent sheaf on \pp and let $\Quot(E)$ be the Grothendieck Quot-scheme of $E$. Given another coherent sheaf $F$ on \pp, let $\Quot(E,F)$ be the subscheme of $\Quot(E)$ corresponding to the epimorphisms $E\to F$. It is a natural problem to compute the motive of $\Quot(E,F)$, as both $E$ and $F$ are discretely parametrized (apart from the torsion parts). In this section we will do this in the case when $E$ is a vector bundle and $F$ is a line bundle. So, we assume that $E=\cO^{\al}$ for some $\al\in\pr$, and $F=\cO(n)$ for some $n\in\bZ$. Denote by $g(\al,n)$ the motive of $\Quot(E,F)$, that is, the number of epimorphisms $E\to F$ up to the action of $\Aut F$.

\begin{proposition}
For any $\al\in \pr$ and $n\in\bZ$, we have
\[g(\al,n)
=\frac1{q-1}q^{\sum_{k\le n}(n-k+1)\al_k}\left(1-(q+1)q^{-\sum_{k\le n}\al_k}+q^{1+\al_n-2\sum_{k\le n}\al_k}\right).\]
\end{proposition}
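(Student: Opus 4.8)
The goal is to compute $g(\alpha,n)$, the motive of the scheme of epimorphisms $\cO^\alpha \to \cO(n)$ modulo $\Aut \cO(n)$. Here $\Aut \cO(n) = \bk^*$ acts by scaling, so $g(\alpha,n)$ equals the number of surjections divided by $q-1$. The plan is to count surjections $\phi: \cO^\alpha \to \cO(n)$ directly by homological bookkeeping, and then apply inclusion-exclusion to cut down from all morphisms to the surjective ones.

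First I would record that a morphism $\cO^\alpha \to \cO(n)$ is a collection of maps $\cO(k)^{\oplus \alpha_k} \to \cO(n)$, and since $\Hom(\cO(k),\cO(n)) = H^0(\pp,\cO(n-k))$ has dimension $\max(n-k+1,0)$, the total space of morphisms $\Hom(\cO^\alpha,\cO(n))$ has dimension $\sum_{k\le n}(n-k+1)\alpha_k$; this already explains the leading factor $q^{\sum_{k\le n}(n-k+1)\alpha_k}$. The essential point is then to characterize when $\phi$ fails to be surjective. A map to a line bundle on $\pp$ is surjective if and only if it is nonzero at every point, equivalently its image is all of $\cO(n)$ rather than a proper subsheaf $\cO(n)(-D)=\cO(n-d)$ for some effective divisor $D$ of degree $d\ge 1$. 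So the non-surjective locus is where the image lands in $\cO(n-1)$, i.e.\ where all the component sections share a common zero.

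The key step is to count, for each point-condition, the morphisms whose image factors through a given subsheaf. A morphism factors through $\cO(n-1)\subset\cO(n)$ precisely when all its defining sections vanish at a common point; more usefully, I would filter by the image line bundle. The clean way is inclusion-exclusion on the cokernel support, but it is cleaner still to count non-surjective morphisms as those landing in $\cO(n-1)(\text{twist by the unique line through the chosen point})$. Concretely, the non-surjective morphisms are a union over points $x\in\pp$ of the subspaces $\Hom(\cO^\alpha,\cO(n)(-x))=\Hom(\cO^\alpha,\cO(n-1))$, and a short inclusion-exclusion over the $\pp = q+1$ points (at the level of the two possible ``overlap'' strata, since two distinct common zeros force the image into $\cO(n-2)$) yields the count. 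The dimension of $\Hom(\cO^\alpha,\cO(n-1))$ is $\sum_{k\le n}(n-k)\alpha_k = \sum_{k\le n}(n-k+1)\alpha_k - \sum_{k\le n}\alpha_k$, which produces the factor $q^{-\sum_{k\le n}\alpha_k}$, and the $\cO(n-2)$ stratum contributes the $q^{1+\alpha_n - 2\sum_{k\le n}\alpha_k}$ term after accounting for the extra drop $\alpha_n$ in passing from $\cO(n-1)$ to $\cO(n-2)$.

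I expect the main obstacle to be bookkeeping the overlap strata correctly: a surjection onto $\cO(n)$ fails to be surjective over a closed subscheme $Z\subset\pp$, and since $\Coh$ on $\pp$ is governed by effective divisors, the ``non-surjective over at least one point'' locus must be stratified by the minimal common-vanishing divisor. The subtlety is that once the image factors through $\cO(n-2)$ the dimension count changes by $\alpha_n$ rather than by $\sum_{k\le n-1}\alpha_k + \alpha_n$, because sections of $\cO(n-k)$ for $k<n$ still have room to vanish to order two while sections of $\cO(0)=\cO$ (the $k=n$ summand) drop entirely. Tracking this asymmetry is where the $\alpha_n$ in the exponent comes from, and getting the inclusion-exclusion signs and the $(q+1)$ point-count to collapse into the stated $1-(q+1)q^{-\sum_{k\le n}\alpha_k}+q^{1+\alpha_n-2\sum_{k\le n}\alpha_k}$ is the delicate computation. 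Finally I would divide by $q-1=\mm{\Aut\cO(n)}$ to obtain $g(\alpha,n)$.
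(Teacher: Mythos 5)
Your overall strategy --- count surjections directly by excising the non-surjective locus, then divide by $\n{\Aut\cO(n)}=q-1$ --- is genuinely different from the paper's. The paper never runs an inclusion--exclusion on vanishing loci: it factors every nonzero morphism $\cO^\al\to\cO$ uniquely (up to $\Gm$) as a surjection onto its image $\cO(-d)$ followed by an embedding $\cO(-d)\emb\cO$, obtains the recursion $q^{h^0(\al,0)}-1=\sum_{d\ge0}g(\al[d])(q^{d+1}-1)$, and inverts it by differencing twice; the coefficients $1,\,-(q+1),\,q$ appear there as the coefficients of $(1-T)(1-qT)$. Your route can be made to work, but as written it has a genuine gap.

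The gap is in the sentence where you take the union over ``the $\pp=q+1$ points'' and stop the inclusion--exclusion at pairwise overlaps. First, the common vanishing locus of the component sections need not contain a rational point: over $\bF_q$ a morphism can fail to be surjective exactly at a closed point of degree $\ge 2$ (e.g.\ two sections of $\cO(2)$ sharing an irreducible quadratic factor), so the union over $\bF_q$-rational points does not cover the non-surjective locus. Second, even for the rational-point part, the naive inclusion--exclusion does not truncate: there are $\binom{q+1}{2}$ pairs of rational points (not $q$ of them), and triple intersections $\Hom(\cO^\al,\cO(n)(-x-y-z))\cong\Hom(\cO^\al,\cO(n-3))$ are nonzero in general, so nothing justifies stopping at the $\cO(n-2)$ stratum. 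The correct statement is a M\"obius inversion over \emph{squarefree effective divisors of all degrees}: the coefficient of $q^{\dim\Hom(\cO^\al,\cO(n-d))}$ is the degree-$d$ coefficient of $\prod_P(1-T^{\deg P})=Z_{\bP^1}(T)^{-1}=(1-T)(1-qT)$, which is $1$, $-(q+1)$, $q$, and $0$ for $d\ge3$. Concretely, $c_2=q$ arises as $\binom{q+1}{2}$ (pairs of rational points, sign $+$) minus $\tfrac{q^2-q}{2}$ (degree-two closed points, sign $-$), and all higher-degree contributions cancel. Once you replace your point-by-point argument with this divisor-level M\"obius inversion (and keep your correct dimension counts $h^0(\al,n)$, $h^0(\al,n)-\sum_{k\le n}\al_k$, $h^0(\al,n)-2\sum_{k\le n}\al_k+\al_n$), you recover the stated formula; at that point your argument and the paper's are two faces of the same identity $N=\sum_d c_d\,q^{h^0(\al,n-d)}$ with $\sum c_dT^d=Z_{\bP^1}(T)^{-1}$.
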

\begin{proof}
It is enough to prove the formula for $g(\al)=g(\al,0)$. Let
\[h^0(\al,n)=\dim\Hom(\cO^\al,\cO(n))=\sum_{k\le n}(n-k+1)\al_k.\]
Any nonzero morphism $\cO^\al\to\cO$ can be uniquely written (up to the action of \Gm) as a composition of a surjection $\cO^\al\to\cO(n)$ and an embedding $\cO(n)\to\cO$ for some $n\le0$. Therefore
\[q^{h^0(\al,0)}-1=\sum_{n\le0}g(\al,n)(q^{h^0(\cO(n),\cO)}-1)\]
or, equivalently,
\[q^{h^0(\al,0)}-1=\sum_{n\ge0}g(\al[n])(q^{n+1}-1).\]
Applying this to $\al[1]$, we obtain
\[q^{h^0(\al[1],0)}-1
=\sum_{n\ge0}g(\al[n+1])(q^{n+1}-1)
=\sum_{n\ge0}g(\al[n])(q^{n}-1).\]
The last two formulas imply
\[\sum_{n\ge0}g(\al[n])(q-1)
=q^{h^0(\al,0)}-q^{h^0(\al[1],0)+1}+q-1
.\]
Applying this to $\al[1]$, we obtain
\[\sum_{n\ge0}g(\al[n+1])(q-1)
=q^{h^0(\al[1],0)}-q^{h^0(\al[2],0)+1}+q-1
.\]
Finally, subtracting the last formula from the previous one, we get
\[g(\al)=\frac{1}{q-1}\rbr{q^{h^0(\al,0)}-(q+1)q^{h^0(\al[1],0)}
+q^{h^0(\al[2],0)+1}}.\]
We note that
\[h^0(\al[1],n)=\sum_{k\le n-1}(n-1-k+1)\al_k
=\sum_{k\le n}(n-k+1)\al_k-\sum_{k\le n}\al_k,\]
\[h^0(\al[2],n)=\sum_{k\le n-2}(n-2-k+1)\al_k
=\sum_{k\le n}(n-k+1)\al_k-2\sum_{k\le n}\al_k+\al_n.\]
\end{proof}

\begin{corollary}
\label{cr:surj}
Assume that $\n\al=r$, $\nn\al=d$, and $n>\mx_\al$. Then
\[g(\al,n)=q^{nr-r-d+1}\frac{(q^r-1)(q^{r-1}-1)}{q-1}.\]
\end{corollary}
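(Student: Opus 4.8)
The plan is to specialize the closed form for $g(\al,n)$ from the preceding Proposition to the range $n>\mx_\al$, where the expression collapses to a product of two cyclotomic-type factors. The decisive observation is that once $n$ lies strictly above the support of $\al$, every index $k$ with $\al_k\ne0$ satisfies $k\le n$, so each truncated sum $\sum_{k\le n}$ coincides with the corresponding full sum over $\bZ$, and moreover $\al_n=0$.

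Granting this, I would evaluate the three quantities appearing in the Proposition. Using $\n\al=r$ and $\nn\al=d$ I get $\sum_{k\le n}\al_k=\n\al=r$ and $\sum_{k\le n}(n-k+1)\al_k=(n+1)\n\al-\nn\al=(n+1)r-d$, while $\al_n=0$. Substituting these into
\[g(\al,n)=\frac1{q-1}q^{\sum_{k\le n}(n-k+1)\al_k}\left(1-(q+1)q^{-\sum_{k\le n}\al_k}+q^{1+\al_n-2\sum_{k\le n}\al_k}\right)\]
turns the formula into
\[g(\al,n)=\frac1{q-1}q^{(n+1)r-d}\left(1-(q+1)q^{-r}+q^{1-2r}\right).\]

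The only remaining step, and the sole place where any manipulation is required, is to factor the parenthesized expression. Pulling out $q^{1-2r}$ gives $1-(q+1)q^{-r}+q^{1-2r}=q^{1-2r}\bigl(q^{2r-1}-q^r-q^{r-1}+1\bigr)$, and the remaining polynomial factors as $q^{2r-1}-q^r-q^{r-1}+1=(q^r-1)(q^{r-1}-1)$. Combining the leftover power $q^{1-2r}$ with the prefactor $q^{(n+1)r-d}$ produces $q^{(n+1)r-d+1-2r}=q^{nr-r-d+1}$, which yields exactly $g(\al,n)=q^{nr-r-d+1}\frac{(q^r-1)(q^{r-1}-1)}{q-1}$. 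I expect no genuine obstacle here: the entire content is the support condition $n>\mx_\al$ (which flattens the truncated sums and kills $\al_n$), the bookkeeping of exponents, and the elementary factorization above.
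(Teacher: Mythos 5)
Your proposal is correct and follows essentially the same route as the paper: since $n>\mx_\al$ the truncated sums become $\sum_{k\le n}\al_k=r$ and $\sum_{k\le n}(n-k+1)\al_k=(n+1)r-d$ with $\al_n=0$, and the bracket factors as $q^{1-2r}(q^r-1)(q^{r-1}-1)$, exactly as in the paper's computation (which writes the result as $q^{(n-1)r-d}(q^r-1)(q^r-q)/(q-1)$, the same quantity).
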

\begin{proof}
By the previous proposition we have
\begin{multline*}
g(\al,n)
=\frac1{q-1}q^{\sum_{k}(n-k+1)\al_k}
\rbr{1-(q+1)q^{-\sum_{k}\al_k}+q^{1-2\sum_{k}\al_k}}\\
=\frac{q^{(n+1)r-d}}{q-1}\rbr{1-(q+1)q^{-r}+q^{1-2r}}
=\frac{q^{(n-1)r-d}}{q-1}(q^r-1)(q^r-q).
\end{multline*}
\end{proof}

\subsection{Skew derivations}
Given a line bundle $L$ over $\bP^1$ define the skew derivation $\de_L$ on the Hall algebra by
\[\de_L(F)=q^{-\hi(L,F)}[F]\circ[L]-[L]\circ[F].\]
In this section we will compute skew derivations of some elements in the Hall algebra.

\begin{proposition}
For any $r>0$ and $d,n\in\bZ$, let
\[B_{r,d,n}=\sum_{\over{\n\al=r,\nn\al=d}{\mn_\al>n}}[\cO^{\al}].\]
Then
\[\de_{\cO(n)}(B_{r,d,n})
=f_r B_{r+1,d+n,n},\qquad f_r=q^{-2r}\frac{(q^r-1)(q^{r+1}-1)}{q-1}.\]
%
\end{proposition}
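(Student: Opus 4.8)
The plan is to expand $\de_{\cO(n)}(B_{r,d,n})$ on the basis $\{[\cO^\be]\}$ and identify each coefficient. Write $B_{r,d,n}=\sum_\al[\cO^\al]$ over $\al\in\pr$ with $\n\al=r$, $\nn\al=d$, $\mn_\al>n$. Since every summand $\cO(k)$ of such a bundle has $k>n$, the Euler form $\hi(\cO(n),\cO^\al)=\sum_k\al_k(k-n+1)=d+r-nr$ depends only on $(r,d,n)$, so the twist $q^{-\hi(\cO(n),\cO^\al)}=q^{nr-r-d}$ can be pulled out of the sum. Reading the two Hall products through $[M]\circ[N]=\sum_X g^X_{MN}[X]$, the coefficient of a fixed $[\cO^\be]$ (necessarily $\n\be=r+1$, $\nn\be=d+n$) becomes $q^{nr-r-d}N_1(\be)-N_2(\be)$, where $N_1(\be)$ is the number of subbundles of $\cO^\be$ isomorphic to $\cO(n)$ whose quotient is a bundle with all summands of degree $>n$, and $N_2(\be)$ is the number of surjections $\cO^\be\to\cO(n)$, up to $\Aut\cO(n)$, whose kernel has all summands of degree $>n$.

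The key device is duality. A saturated sub-line-bundle $\cO(n)\emb\cO^\be$ with quotient $Q$ corresponds to a quotient line bundle $\cO^{\be^*}\to\cO(-n)$ with kernel $Q^\vee$, so $N_1(\be)$ equals the number of surjections $\cO^{\be^*}\to\cO(-n)$, up to scalars, whose kernel has all summands of degree $<-n$; these surjection counts are precisely the quantities $g(\cdot,\cdot)$ computed in Section~\ref{sec:quotients on pp}. I would then split into cases according to the position of $\mn_\be$ relative to $n$.

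In the main case $\mn_\be>n$ one has $\mx_{\be^*}=-\mn_\be<-n$. Because a subbundle of $\cO^{\be^*}$ cannot contain a summand of degree exceeding $\mx_{\be^*}$, the kernel of any surjection $\cO^{\be^*}\to\cO(-n)$ automatically has all summands of degree $\le\mx_{\be^*}<-n$; thus the kernel condition defining $N_1$ is vacuous and $N_1(\be)=g(\be^*,-n)$. Since $-n>\mx_{\be^*}$, Corollary~\ref{cr:surj} applies and yields $N_1(\be)=q^{d-nr-r}\frac{(q^{r+1}-1)(q^r-1)}{q-1}$, whence $q^{nr-r-d}N_1(\be)=f_r$. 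On the other side $\Hom(\cO^\be,\cO(n))=0$, as all summands of $\cO^\be$ have degree $>n$, so $N_2(\be)=0$ and the coefficient is exactly $f_r$, as required.

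It remains to show the coefficient vanishes whenever $\mn_\be\le n$, and this boundary analysis is the main obstacle. If $\mn_\be<n$, a direct $\Hom/\Ext$ argument on $\pp$, combined with the summand bound for subbundles applied to both $\cO^\be$ and $\cO^{\be^*}$, shows $N_1(\be)=N_2(\be)=0$. If $\mn_\be=n$, write $\cO^\be\iso\cO(n)^{\oplus c}\oplus W$ with $c=\be_n\ge1$ and $W$ a bundle all of whose summands have degree $>n$. Here $\Hom(W,\cO(n))=0$ and $\Ext^1(\cO(n),W)=0$ force every relevant kernel to split off a copy of $\cO(n)^{\oplus(c-1)}$ (respectively its dual analogue), so both $N_1(\be)$ and $N_2(\be)$ vanish for $c\ge2$, while for $c=1$ a short count gives $N_1(\be)=q^{d-nr+r}$ and $N_2(\be)=1$, so that again $q^{nr-r-d}N_1(\be)-N_2(\be)=0$. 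Collecting the cases gives $\de_{\cO(n)}(B_{r,d,n})=f_r\sum_{\mn_\be>n}[\cO^\be]=f_r B_{r+1,d+n,n}$.
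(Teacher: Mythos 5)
The proposal is correct and follows essentially the same route as the paper: both identify the coefficient of $[\cO^\be]$ in the skew derivation by dualizing sub-line-bundles $\cO(n)\emb\cO^\be$ into surjections $\cO^{\be^*}\to\cO(-n)$ and invoking Corollary~\ref{cr:surj}. The only difference is that you verify explicitly, case by case on $\mn_\be$, the cancellation of the boundary terms with $\mn_\be\le n$, which the paper packages into the asserted structural identity $[\cO^\al]\circ[\cO]=q^{\hi(\cO,\cO^\al)}[\cO]\circ[\cO^\al]+\sum_{\be\in\pr_{r+1,d}^+}c_\be[\cO^\be]$.
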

\begin{proof}
It is enough to prove the statement for $n=0$ and $B_{r,d}=B_{r,d,0}$. Let
\[\pr_{r,d}^+=\sets{\al\in \pr}{\n\al=r,\nn\al=d,\mn_\al>0}.\]
Multiplication rules in the Hall algebra imply that, for any $\al\in \pr_{r,d}^+$,
\[[\cO^\al]\circ[\cO]=q^{\hi(\cO,\cO^\al)}[\cO]\circ[\cO^\al]+\sum_{\be\in \pr_{r+1,d}^+}c_\be[\cO^\be]\]
for some coefficients $c_\be$ (note that $\hi(\cO,\cO^\al)=d+r$). Similarly,
\[B_{r,d}\circ[\cO]-q^{d+r}[\cO]\circ B_{r,d}=\sum_{\be\in \pr_{r+1,d}^+}c_\be[\cO^\be]\]
for some coefficients $c_\be$. For any $\be\in \pr_{r+1,d}^+$, the coefficient of $\cO^\be$ in the product $B_{r,d}\circ[\cO]$ equals the number of embeddings $\cO\subset \cO^\be$ such that the quotient is locally free. This number is equal to $g(\be^*,0)$ introduced in Section \ref{sec:quotients on pp}. By Corollary \ref{cr:surj} we have
\[g(\be^*,0)=q^{-r+d}\frac{(q^r-1)(q^{r+1}-1)}{q-1}.\]
This implies
\begin{multline*}
q^{-d-r}B_{r,d}\circ[\cO]-[\cO]\circ B_{r,d}=
q^{-d-r}\sum_{\be\in \pr_{r+1,d}^+}g(\be^*,0)[\cO^\be]\\
=q^{-2r}\frac{(q^r-1)(q^{r+1}-1)}{q-1} B_{r+1,d}.
\end{multline*}
\end{proof}

\subsection{Proof of the theorem}
We assume that $r>0$ is fixed.

\begin{proposition}
Given $n\ge0$, let
\[\pr^0_n=\sets{\al\in \pr}{\n\al=r,\nn\al=0,\mn_\al\ge-n}.\]
Then
\begin{equation}
\sum_{\al\in \pr^0_{n}}\vi(\cO^\al)
=\prod_{k=1}^n\prod_{i=1}^{r-1}\frac{1-q^{rk-i}ut^k}{1-q^{rk+i}ut^k}.
\label{eq:phi n}
\end{equation}
\end{proposition}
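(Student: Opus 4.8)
The plan is to argue by induction on $n$. The base case $n=0$ is immediate: $\pr^0_0$ forces $\al=r[0]$, so $\cO^\al=\cO^r_\pp$ and the sum is $\vi(\cO^r_\pp)=1$, matching the empty product. For the inductive step set $S_n=\sum_{\al\in\pr^0_n}\vi(\cO^\al)$ and note that $\sum_{\al\in\pr^0_n}[\cO^\al]=B_{r,0,-n-1}$. Every $\al$ with $\mn_\al\ge-n$ splits uniquely as $\al=s[-n]+\be$ with $s=\al_{-n}\ge0$ and $\mn_\be>-n$; since $\mx$ of $\cO(-n)^{\oplus s}$ is $-n<\mn_\be$, the multiplication rule gives $[\cO^\al]=[\cO(-n)^{\oplus s}]\circ[\cO^\be]$. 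Collecting terms of fixed $s$ yields
\[B_{r,0,-n-1}=\sum_{s=0}^{r-1}[\cO(-n)^{\oplus s}]\circ B_{r-s,ns,-n}.\]
Applying $\vi$ and the defining commutation relation with the negative bundle $E=\cO(-n)^{\oplus s}$ (for which $u^{\rk E}t^{-\deg E}=u^st^{ns}$) gives $S_n=\sum_{s=0}^{r-1}u^st^{ns}\Phi_s$, where $\Phi_s:=\vi\big(B_{r-s,ns,-n}\circ[\cO(-n)^{\oplus s}]\big)$ and $\Phi_0=\vi(B_{r,0,-n})=S_{n-1}$.

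Next I would produce a recursion for $\Phi_s$. The skew-derivation proposition, applied with the line bundle $\cO(-n)$, gives $\de_{\cO(-n)}(B_{r-s,ns,-n})=f_{r-s}B_{r-s+1,(s-1)n,-n}$, and since $\hi(\cO(-n),\cO^\be)=nr+r-s$ is constant on the relevant $\be$ this reads
\[q^{-(nr+r-s)}B_{r-s,ns,-n}\circ[\cO(-n)]=f_{r-s}B_{r-s+1,(s-1)n,-n}+[\cO(-n)]\circ B_{r-s,ns,-n}.\]
Writing $[\cO(-n)^{\oplus s}]=\tfrac1{[s]_q}[\cO(-n)]\circ[\cO(-n)^{\oplus(s-1)}]$, substituting this identity, and applying $\vi$ — moving the stray $\cO(-n)$ to the far right by the commutation relation, which reassembles $[\cO(-n)^{\oplus(s-1)}]\circ[\cO(-n)]=[s]_q[\cO(-n)^{\oplus s}]$ and contributes a factor $ut^n[s]_q$ — yields
\[\Phi_s=\frac{q^{nr+r-s}}{[s]_q}\big(f_{r-s}\Phi_{s-1}+ut^n[s]_q\,\Phi_s\big),\]
hence the first-order recursion $\Phi_s=\dfrac{q^{nr+r-s}f_{r-s}}{[s]_q\,(1-q^{nr+r-s}ut^n)}\,\Phi_{s-1}$. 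The chain terminates automatically at $\Phi_r=0$ because $f_0=0$. The delicate point is that $\vi$ is defined only on $\cH_{r,0}$, so every intermediate expression must have total rank $r$ and total degree $0$; this is exactly why one peels a single $\cO(-n)$ at a time and always carries the remaining copies on the right.

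Unwinding gives $\Phi_s=S_{n-1}\prod_{j=1}^s\frac{q^{nr+r-j}f_{r-j}}{[j]_q(1-q^{nr+r-j}ut^n)}$, so that
\[\frac{S_n}{S_{n-1}}=\sum_{s=0}^{r-1}\;\prod_{j=1}^{s}\frac{ut^n\,q^{nr+r-j}f_{r-j}}{[j]_q\,(1-q^{nr+r-j}ut^n)},\]
and it remains to identify this finite sum with $\prod_{i=1}^{r-1}\frac{1-q^{rn-i}ut^n}{1-q^{rn+i}ut^n}$. Inserting $f_{r-j}$ and $[j]_q$ and setting $a=q^{rn}ut^n$, both sides become rational functions of $a$ with the same simple poles at $a=q^{-i}$ $(1\le i\le r-1)$ and the same value $1$ at $a=0$; I would finish by matching residues at each pole, equivalently by recognising the sum as a terminating $q$-Chu--Vandermonde summation, the $r=2$ case above being the first instance. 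The main obstacle is the middle step: organising the iterated skew-derivation bookkeeping so the commutation relation can be applied at each stage without ever leaving the domain $\cH_{r,0}$ of $\vi$. Once the clean recursion $\Phi_s=c_s\Phi_{s-1}$ is in hand, the remaining $q$-series identity is routine.
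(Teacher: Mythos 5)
Your argument is correct and is essentially the paper's own proof: the paper likewise stratifies $\pr^0_n$ by the multiplicity $k=\al_{-n}$, writes each stratum as $[\cO(-n)^{\oplus k}]\circ B_{r-k,kn,-n}$, peels off one copy of $\cO(-n)$ via $[\cO(-n)^{\oplus k}]=\frac1{[k]_q}[\cO(-n)]\circ[\cO(-n)^{\oplus(k-1)}]$, and uses the skew-derivation formula together with the commutation relation for $\vi$ to obtain the same first-order recursion (your $\Phi_s$ is just $u^{-s}t^{-ns}\vi(A_{n,s})$ in the paper's notation). The concluding finite sum is evaluated in the paper by Heine's $q$-Gauss summation, which is exactly the terminating $q$-hypergeometric identity you defer to.
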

\begin{proof}
For any $0\le k\le r-1$, define
\[\pr^0_{n,k}=\sets{\al\in \pr^0_n}{\al_{-n}=k}\]
and
\[A_{n,k}
=\sum_{\al\in \pr^0_{n,k}}[\cO^\al]
=[\cO(-n)^{\oplus k}]\circ B_{r-k,kn,-n}.\]
We know that $[\cO(n)^{\oplus k}]=\frac{1}{[k]^!_q}[\cO(n)]^k$.
Define
\[A'_{n,k}=\frac{1}{[k]^!_q}[\cO(-n)]^{k-1}\circ B_{r-k,kn,-n}
=\frac{1}{[k]_q}[\cO(-n)^{\oplus k-1}]\circ B_{r-k,kn,-n},\]
so that $A_{n,k}=[\cO(-n)]\circ A'_{n,k}$.
If $\n\al=r-k$, $\nn\al=nk$ then
\[\hi(\cO(-n),\cO^\al)=nk+n(r-k)+r-k=nr+r-k.\]
Therefore
\begin{multline*}
q^{-nr-r+k}A'_{n,k}\circ[\cO(-n)]-[\cO(-n)]\circ[A'_{n,k}]\\
=\frac{f_{r-k}}{[k]_q}[\cO(-n)^{\oplus k-1}]\circ B_{r-k+1,kn-n,-n}
=\frac{f_{r-k}}{[k]_q}A_{n,k-1}.
\end{multline*}
Applying the function \vi, we obtain
\[(q^{-nr-r+k}u\inv t^{-n}-1)\vi(A_{n,k})=\frac{f_{r-k}}{[k]_q}\vi(A_{n,k-1}).\]
Let $z=q^{nr}ut^n$. Then
\[\vi(A_{n,k})
=\frac{f_{r-k}}{(q^{k-r}z\inv-1)[k]_q}\vi(A_{n,k-1})
=\frac{zq^{k-r}(q^{r-k}-1)(q^{r-k+1}-1)}{(1-zq^{r-k})(q^k-1)}\vi(A_{n,k-1}).\]
\rem{$r-k-2(r-k)=k-r$}
Therefore
\[\sum_{\al\in \pr_n^0}\vi(\cO^\al)=\sum_{k=0}^{r-1}\vi(A_{n,k})
=\vi(A_{n,0})\sum_{m=0}^{r-1}\prod_{k=1}^m
\frac{zq^{k-r}(q^{r-k}-1)(q^{r-k+1}-1)}{(1-zq^{r-k})(q^k-1)}
\]
while $\vi(A_{n,0})=\sum_{\al\in \pr_{n-1}^0}\vi(\cO^\al)$. Our proposition will be proved by induction if we will show that
\[\sum_{m=0}^{r-1}\prod_{k=1}^m
\frac{zq^{k-r}(q^{r-k}-1)(q^{r-k+1}-1)}{(1-zq^{r-k})(q^k-1)}
=\frac{(1-zq\inv)\dots(1-zq^{1-r})}{(1-zq)\dots(1-zq^{r-1})}\]
or, equivalently,
\[\sum_{m=0}^{r-1}\prod_{k=1}^m\frac{(1-q^{r-k})(1-q^{r+1-k})}{(1-zq^{r-k})(1-q^{-k})}(zq^{-r})^m=\frac{(1-zq\inv)\dots(1-zq^{1-r})}{(1-zq)\dots(1-zq^{r-1})}.\]
Inverting $q$, we can write this equation in the form
\[\sum_{m=0}^{r-1}\frac{(q^{-r+1};q)_m(q^{-r};q)_m}{(zq^{-r+1};q)_m(q;q)_m}(zq^{r})^m=\frac{(1-zq)\dots(1-zq^{r-1})}{(1-zq\inv)\dots(1-zq^{1-r})},\]
where $(z;q)_m=\prod_{i=0}^{m-1}(1-zq^i)$.
The last equation follows from the Heine-Gauss summation formula
\cite[1.5.1]{gasper_basic}

\def\qhyper#1#2#3#4#5{{}_#1\phi_#2\left(\left.\begin{matrix}#3\\ #4\end{matrix}\,\right|#5\right)}

$${}_2\phi_1\left(\left.\begin{matrix}a,b\\ c\end{matrix}\, \right| q;\frac c{ab}\right)=\frac{(c/a;q)_\infty(c/b;q)_\infty}{(c;q)_\infty(c/ab;q)_\infty},$$
where $(z;q)_\infty=\prod_{i=0}^{\infty}(1-zq^i)$ and the $q$-hypergeometric series ${}_2\phi_1$ is defined by 
\cite[1.2.14]{gasper_basic}
\[\qhyper 21{a,b}c{q;z}=\sum_{m\ge0}\frac{(a;q)_m(b;q)_m}{(c;q)_m(q;q)_m}z^m.\]
\end{proof}

Taking the limit in the equation \eqref{eq:phi n} as $n\to\infty$ we obtain
\[\sum_{\n\al=r,\nn\al=0}\vi(\cO^\al)=\prod_{k\ge1}\prod_{i=1}^{r-1}\frac{1-q^{rk-i}ut^k}{1-q^{rk+i}ut^k}.\]
This proves Theorem \ref{th:phi}.
\section{Wall-crossing and blow-up formulas}
\label{sec:wall-cross}
\subsection{Wall-crossing formula}
Let $S$ be a surface and let $\Ga=\bZ\xx\NS(S)\xx\bZ$ and
\eq{\Ga_+=(\bZ_{>0}\xx\NS(S)\xx\bZ)\cup(\set0\xx(\NS^+(S)\ms0)\xx\bZ)\cup(\set0\xx\set0\xx\bZ_{\ge0}),}
where $\NS^+(S)\subset\NS(X)$ is an effective monoid, generated by the classes of irreducible curves in $S$.
For any elements $\ga=(r,c,d)$, $\ga'=(r',c',d')$ in \Ga, we define, following \eqref{eq:ang EF},
\eq{\ang{\ga,\ga'}=K_S(r'c-rc').}
Let $\bA_S$ be a suitable completion of $\bQ(q^\oh)[\Ga_+]$ (see e.g., \cite[\S3]{bridgeland_hall}) with a basis $x^\ga=v^{r}u^{c}t^{d}$ for $\ga=(r,c,d)\in\Ga_+$, and multiplication
\eq{x^\ga\circ x^{\ga'}=q^{\oh\ang{\ga,\ga'}}x^{\ga+\ga'}.}
For any nef divisor $H$, integer $r\ge0$, and element $c\in\NS(X)$, we defined
\eq{\pf_H(r,c)
=\sum_{\ga=(r,c,d)}q^{\oh\hi(\ga,\ga)}\mm{\cM_H(\ga)}t^{-\ch_2(\ga)}
.}
Given another nef divisor $H'$, we define $H_{\pm}=H\pm\eps H'$ for $0<\eps\ll1$. Define $\mu_H(r,c)=\frac{H\cdot c}{r}$.

\begin{proposition}[Wall-crossing formula]
\label{pr:wall-cross}
Assume that $H\cdot K_S<0$. Then
\[\pf_H(r,c)
=\sum_{\overst{(r,c)=\sum(r_i,c_i)}{\overst{\mu_H(r,c)=\mu_H(r_i,c_i)}{\mu_{H'}(r_1,c_1)>\dots>\mu_{H'}(r_k,c_k)}}}
q^{\oh\sum_{i<j}K_S(r_jc_i-r_ic_j)}\prod_{i=1}^k\pf_{H_+}(r_i,c_i).\]
If $H_-$ is nef then also
\[\pf_H(r,c)
=\sum_{\overst{(r,c)=\sum(r_i,c_i)}{\overst{\mu_H(r,c)=\mu_H(r_i,c_i)}{\mu_{H'}(r_1,c_1)<\dots<\mu_{H'}(r_k,c_k)}}}
q^{\oh\sum_{i<j}K_S(r_jc_i-r_ic_j)}\prod_{i=1}^k\pf_{H_-}(r_i,c_i).\]
\end{proposition}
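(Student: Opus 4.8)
The plan is to run a Harder--Narasimhan wall-crossing argument inside the motivic Hall algebra $\cH(\cA)$ of the exact category $\cA$ of torsion-free sheaves on $S$, and then transport the resulting identity to the quantum torus $\bA_S$ through the integration morphism. First I would fix the wall slope $\mu=\mu_H(r,c)$ and, for each class $\ga=(r,c,d)$ on this wall, consider the characteristic stack function $[\cM_H(\ga)]\in\cH(\cA)$ of $H$-semistable sheaves, together with the analogous functions $[\cM_{H_+}(\ga_i)]$ for the perturbed divisor $H_+=H+\eps H'$. Applying the integration morphism $I:\cH(\cA)\to\bA_S$ and summing over the $d$-coordinate recovers $\pf_H(r,c)$ and the factors $\pf_{H_+}(r_i,c_i)$.

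The combinatorial input is the Harder--Narasimhan filtration with respect to $H_+$. Since the classes of subquotients of a semistable sheaf of type $(r,c)$ form a bounded, hence discrete, set of slopes, for $0<\eps\ll1$ an $H$-semistable sheaf $E$ of $H$-slope $\mu$ is a successive extension of its $H_+$-HN factors $E_1,\dots,E_k$, each $H_+$-semistable, all of the \emph{same} $H$-slope $\mu$ (the perturbation only breaks ties on the wall), with strictly decreasing $H'$-slopes $\mu_{H'}(E_1)>\dots>\mu_{H'}(E_k)$; conversely any such ordered collection assembles uniquely into its HN filtration. By the existence and uniqueness of HN filtrations for nef divisors established above, this yields the identity in $\cH(\cA)$
\[
\sum_{d}[\cM_H(r,c,d)]
=\sum_{\substack{(r,c)=\sum_i(r_i,c_i)\\ \mu_H(r_i,c_i)=\mu\ \forall i\\ \mu_{H'}(r_1,c_1)>\dots>\mu_{H'}(r_k,c_k)}}
[\cM_{H_+}(\ga_1)]\ast\dots\ast[\cM_{H_+}(\ga_k)],
\]
where $\ast$ is the Hall product and each inner factor $[\cM_{H_+}(\ga_i)]$ is summed over its $d$-coordinate.

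The decisive step is that $I$ is an algebra homomorphism on the subalgebra spanned by these wall classes, so that $I$ of the ordered Hall product becomes the twisted product $I([\cM_{H_+}(\ga_1)])\circ\dots\circ I([\cM_{H_+}(\ga_k)])$ in $\bA_S$. This requires the vanishing of the relevant second extension groups: for $H_+$-semistable $E_i,E_j$ on the wall I would use Serre duality $\Ext^2(E_i,E_j)\iso\Hom(E_j,E_i\ts\om_S)^*$ and note that, because $H\cdot K_S<0$ \emph{strictly}, $\mu_H(E_i\ts\om_S)=\mu+H\cdot K_S<\mu=\mu_H(E_j)$, whence the $\Hom$ vanishes by semistability. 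This is a strict-slope refinement of Lemma \ref{lm:ext2}, and is precisely why the hypothesis is the strict inequality $H\cdot K_S<0$ rather than $\le0$: on the wall the two factors share the $H$-slope $\mu$, so only the $\om_S$-twist, which strictly lowers the slope, forces the vanishing. With $\Ext^2=0$ the extension stacks are unobstructed of expected dimension, so $I$ multiplies the factors with motivic weight $q^{-\hi(\ga_i,\ga_j)}$; combining this with the quadratic refinement $q^{\oh\hi(\ga,\ga)}$ built into the definition of $\pf$ cancels the symmetric part of the Euler form and leaves exactly the antisymmetric twist $q^{\oh\sum_{i<j}\ang{\ga_i,\ga_j}}=q^{\oh\sum_{i<j}K_S(r_jc_i-r_ic_j)}$, giving the first formula.

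For the second formula I would repeat the argument verbatim with $H_-=H-\eps H'$, which is legitimate exactly when $H_-$ is nef; the $\Ext^2$-vanishing persists since $H_-\cdot K_S<0$ for small $\eps$, and the HN ordering relative to $H_-$ reverses the $H'$-slope inequalities to $\mu_{H'}(r_1,c_1)<\dots<\mu_{H'}(r_k,c_k)$. The main obstacle is the homomorphism property of the integration morphism, namely that the motives of the extension stacks between wall factors are governed by the Euler form; this is controlled precisely by the $\Ext^2$-vanishing above, and the only remaining subtlety is the bookkeeping of the factor $q^{\oh\hi(\ga,\ga)}$ ensuring that the symmetric Euler contributions cancel and only the wall-crossing twist survives.
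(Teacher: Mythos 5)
Your proposal is correct and follows essentially the same route as the paper: decompose $H$-semistable sheaves by their Harder--Narasimhan filtrations with respect to the perturbed divisor, record the resulting identity in the Hall algebra, and push it to $\bA_S$ via the integration map, whose multiplicativity on the relevant products is guaranteed by vanishing of the second extension groups between the filtration factors. The only cosmetic difference is that you deduce this $\Ext^2$-vanishing from the common $H$-slope of the factors together with the strict inequality $H\cdot K_S<0$ (which requires noting that the $H_\pm$-semistable factors are also $H$-semistable), whereas the paper simply applies Lemma \ref{lm:ext2} to the nef divisor $H_\pm$ and the strictly ordered $H'$-slopes; both arguments are valid.
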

\begin{proof}
There exists a canonical map $I$ (called integration map) from the (opposite) Hall algebra of the category of coherent sheaves over $S$ to the quantum affine plane $\bA_S$ (see e.g., \cite{mozgovoy_wall-crossing}). This map does not preserve products in general, but if $\Ext^2(E,F)=0$, then $I([F]\circ[E])=I([F])\circ I([E])$. For any $H$-semistable sheaf, we consider its Harder-Narasimhan filtration with respect to $H_+$. In this way we obtain a relation in the Hall algebra between $H$-semistable sheaves and $H_+$-semistable sheaves. By Lemma \ref{lm:ext2} the second extension groups between factors of Harder-Narasimhan filtrations are zero. Therefore the integration map transforms the above relation in the Hall algebra into a relation in the quantum affine plane $\bA_S$. This gives the first formula of the proposition. The proof of the second formula is the same.
\end{proof}

\begin{remark}
Assume that $S=\Si_n$ is a Hirzebruch surface. Then $K_S=-2C_0-(2+n)f$ and for any nef divisor $H\ne0$ we have $H\cdot K_S<0$. This means that the wall-crossing formula is always satisfied. A similar wall-crossing formula for the Hirzebruch surfaces can be found in \cite{manschot_bps}.
\end{remark}

\subsection{Blow-up formula}
Let $S$ be a smooth projective surface, $\pi:\what S\to S$ be the blow-up at a point and $C_0$ be the exceptional divisor of \pi.
As in Section \ref{sec:tf and lf}, let $\fam$ be a set of isomorphism classes of locally free sheaves on $S$ having rank $r$ and the first Chern class $c_1$. For any $m\in\bZ$, define
\begin{enumerate}
\item $\fam'$ to be the set of isomorphism classes of torsion free sheaves $E$ over $S$ such that $E^\refl\in\fam$.
\item $\what\fam$ to be the set of isomorphism classes of locally free sheaves $E$ over $S$ such that $\pi_*E\in\fam'$ and $c_1(E)=\pi^*c_1-mC_0$ (that is, $c_1(E)\cdot C_0=m$).
\item $\what\fam'$ to be the set of isomorphism classes of torsion free sheaves $E$ over $\what S$ such that $\pi_*E\in\fam'$ and $c_1(E)=\pi^*c_1-mC_0$ (equivalently, if $E^\refl\in\what\fam$).
\end{enumerate}

\rem{If $E$ is torsion free over $\what S$ then $\pi_*E$ is torsion free over $S$. Indeed, if $Q\subset\pi_*E$ is torsion then $\pi^*Q\to E$ is nonzero, but the dimension of $\pi^*Q$ is at most one}

\rem{If $E$ is locally free over $\what S$ then $(\pi_*E)^\refl=F$ if and only if $E\in\pi^*F$ and $\pi^*F/E$ is supported on $C_0$.}

The following result was proved by Yoshioka \cite[Prop.~3.4]{yoshioka_chamber} (see also \cite[\S3]{gottsche_theta}).

\begin{proposition}
We have
\[
\pfa_{\what\fam'}(t)
=\prod_{k\ge1}\frac{1}{(1-q^{rk}t^k)^r}
\sum_{\overst{\sum_{i=1}^r a_i=0}{a_i\in\bZ+\frac mr}}
q^{\sum_{i<j}\binom{a_j-a_i}{2}}t^{-\sum_{i<j}a_ia_j}
\pfa_{\fam'}(t).
\]
\end{proposition}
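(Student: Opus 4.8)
The plan is to analyze the morphism $E\mapsto\pi_*E$, which by the very definition of $\what\fam'$ maps $\what\fam'$ into $\fam'$ (landing among torsion free sheaves by the first marginal remark), and to show that its fibres contribute a single universal factor. Concretely, I would first pass to reflexive hulls: for $E\in\what\fam'$ the sheaf $E^\refl$ lies in $\what\fam$, so by the second marginal remark it sits as a locally free subsheaf $E^\refl\sbe\pi^*F$ for a unique $F\in\fam$, with $\pi^*F/E^\refl$ supported on the exceptional curve $C_0\iso\pp$; the sheaf $E$ is then recovered from $E^\refl$ by a further zero-dimensional modification. Since $\pi$ contracts $C_0$, one has $\pi^*F|_{C_0}\iso\cO_\pp^r$ for every $F$, so the entire count localizes to a neighbourhood of $C_0$ and is independent of the global sheaf; this independence is what lets the local factor be pulled out of the sum over $\fam'$, leaving $\pfa_{\fam'}(t)$.

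For the local count I would use elementary transformations along $C_0$ in the sense of Section~\ref{sec:elem and local}, now with $C_0^2=-1$. The decisive difference from the fibre case treated there (where $X^2=0$) is that the ideal $I=\cO_{\what S}(-C_0)$ restricts to $I|_{C_0}\iso\cO_\pp(1)$ rather than $\cO_\pp$, so each double transformation twists the bundle on $\pp$ by $\cO(1)$ and raises its degree. I would stratify by the splitting type $E^\refl|_{C_0}\iso\bop_{i=1}^r\cO_\pp(b_i)$, where $\sum_i b_i=c_1(E)\cdot C_0=m$ because $c_1(E)=\pi^*c_1-mC_0$ and $C_0^2=-1$. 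Setting $a_i=m/r-b_i$, so that $\sum_i a_i=0$ and $a_i\in\bZ+m/r$, the discrete sum over splitting types produces the theta-like factor $\sum_{\vec a}q^{\sum_{i<j}\binom{a_j-a_i}2}t^{-\sum_{i<j}a_ia_j}$. Here I would identify $-\sum_{i<j}a_ia_j=\oh\sum_i a_i^2$, a positive definite quadratic form (whence convergence), with the shift of $\ch_2$, and the exponent $\sum_{i<j}\binom{a_j-a_i}2$ with the dimension of the associated stratum, i.e.\ with the relevant difference of $\Hom$- and $\Ext^1$-dimensions among the $\cO_\pp(b_i)$.

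Within a fixed splitting type the remaining freedom is the tower of thickenings of the quotient along the infinitesimal neighbourhoods of $C_0$, filtered with successive quotients being sheaves on $\pp$ twisted by powers of $\cO(1)$, together with the extra zero-dimensional modification distinguishing $E$ from $E^\refl$. Counting these with weight $t^{r\De(E)}/\nnaut E$ is a Quot-scheme computation on $\pp$ in the spirit of Yoshioka's formula for $H_r(t)$ and of Theorem~\ref{th:local}, and it is expected to yield the universal product $\prod_{k\ge1}(1-q^{rk}t^k)^{-r}$: one factor $(1-q^{rk}t^k)^{-1}$ for each thickening level $k\ge1$ and each of the $r$ summands. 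Granting that the splitting-type sum and this thickening product decouple, their product is the fibre generating function and, summing over $\fam'$, one obtains the stated formula.

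The main obstacle is exactly this decoupling together with the precise bookkeeping of the exponents, and it is where $C_0^2=-1$ bites: because $I|_{C_0}\iso\cO(1)$, the degree shift along $C_0$ is coupled to the thickening level, so the splitting and thickening contributions are not a priori independent and the naive factorization must be justified. Carrying this out rigorously calls for the same Hall-algebra computation on $\pp$ as in Section~\ref{sec:counts on P1}—with an additional $t$-twist inserted at each elementary transformation—followed by the evaluation of the resulting $q$-series through a Heine--Gauss type summation. This is precisely the calculation performed by Yoshioka \cite[Prop.~3.4]{yoshioka_chamber} (see also \cite[\S3]{gottsche_theta}).
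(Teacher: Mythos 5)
The first thing to say is that the paper does not prove this proposition at all: it is quoted verbatim from Yoshioka \cite[Prop.~3.4]{yoshioka_chamber} (see also \cite[\S3]{gottsche_theta}), so there is no internal proof to compare your argument against. Your proposal is an outline of how such a proof could go, and it ends by deferring the decisive computation to the very same citation; as a standalone proof it therefore does not establish the statement, and you say as much yourself. The skeleton is the right one, and the elementary numerology checks out: $\deg(E|_{C_0})=c_1(E)\cdot C_0=m$ gives $\sum_i b_i=m$ and $a_i=m/r-b_i\in\bZ+\frac mr$ with $\sum a_i=0$; the identity $-\sum_{i<j}a_ia_j=\oh\sum_i a_i^2$ follows from $\sum a_i=0$; and the $r=1$, $m=0$ specialization of $\prod_{k\ge1}(1-q^{rk}t^k)^{-r}$ correctly recovers the Hilbert-scheme blow-up factor $\prod_k(1-q^kt^k)^{-1}$.

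The genuine gaps are the three steps you assert rather than prove. (1) The groupoid bookkeeping: to pull the local factor out of the sum over $\fam'$ you need the analogue of the orbit--stabilizer argument of Lemma \ref{lm:tf and lf}, i.e.\ that the weights $1/\nnaut{E}$ on $\what S$ reassemble into $1/\nnaut{F}$ times a count of local data; this is not addressed. (2) The identification of $q^{\sum_{i<j}\binom{a_j-a_i}{2}}$ with the measure of the stratum of a fixed splitting type and of $t^{-\sum_{i<j}a_ia_j}$ with the shift of $r\De$ is exactly the content of the formula and is left as an expectation. (3) The decoupling of the splitting-type sum from the thickening product is, as you note, obstructed by $\cO_{\what S}(-C_0)|_{C_0}\iso\cO_\pp(1)$, which ties the degree on $C_0$ to the thickening level; without resolving this, the claimed factorization into a theta sum times $\prod_k(1-q^{rk}t^k)^{-r}$ is unjustified. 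A smaller but real issue: the inclusion $E^\refl\sbe\pi^*F$ with quotient supported on $C_0$ only holds for one sign of $m$ (for the other sign the inclusion is reversed), so the reduction to ``subsheaves of $\pi^*F$'' needs a preliminary twist by a multiple of $\cO(C_0)$. Since every one of these points is ultimately discharged by citing Yoshioka, your proposal leaves the proposition in exactly the state it already occupies in the paper: an externally quoted result.
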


\begin{corollary}
We have
\eql{
\frac{\pf_{\what\fam'}(t)}{\pf_{\fam'}(t)}
=\prod_{k\ge1}\frac1{(1-t^k)^r}
\sum_{\overst{a\in\bZ^r}{\sum_{i=1}^r a_i=-m}}q^{(\rho,a)}t^{\oh(a,a)},
}{blow tf}
where $\rho=\oh\sum_{i<j}(e_i-e_j)\in\bR^r$ and $(a,b)=\sum_{i=1}^r a_ib_i$ for $a,b\in\bR^n$.
\end{corollary}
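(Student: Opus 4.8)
The plan is to deduce this from the blow-up formula of the preceding Proposition by passing from the $\pfa$-normalization to the $\pf$-normalization through equation \eqref{eq:ch and De}. First I would apply that relation to both families,
\[\pf_{\fam'}(t)=q^{\oh r^2\hi(\cO_S)}t^{-\frac{c_1^2}{2r}}\pfa_{\fam'}(q^{-r}t),\qquad
\pf_{\what\fam'}(t)=q^{\oh r^2\hi(\cO_{\what S})}t^{-\frac{\what{c_1}^2}{2r}}\pfa_{\what\fam'}(q^{-r}t),\]
where $\what{c_1}=\pi^*c_1-mC_0$ is the first Chern class of the sheaves in $\what\fam'$. Dividing, the two $q$-prefactors cancel since $\hi(\cO_{\what S})=\hi(\cO_S)$ for the blow-up of a point, and the $t$-prefactor collapses to $t^{\frac{m^2}{2r}}$ once I insert $\what{c_1}^2=(\pi^*c_1-mC_0)^2=c_1^2-m^2$ (using $\pi^*c_1\cdot C_0=0$ and $C_0^2=-1$). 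Hence the ratio equals $t^{\frac{m^2}{2r}}\,\pfa_{\what\fam'}(q^{-r}t)/\pfa_{\fam'}(q^{-r}t)$, and this last quotient I read off from the Proposition.

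Next I would substitute $q^{-r}t$ into that formula. The product $\prod_{k\ge1}\rbr{1-q^{rk}(q^{-r}t)^k}^{-r}$ becomes $\prod_{k\ge1}(1-t^k)^{-r}$, already the product in the claimed formula, while the monomial $(q^{-r}t)^{-\sum_{i<j}a_ia_j}$ contributes an extra factor $q^{\,r\sum_{i<j}a_ia_j}$. It then remains to identify the resulting $(q,t)$-sum, indexed by $a_i\in\bZ+\frac mr$ with $\sum_i a_i=0$, with the sum over $b\in\bZ^r$, $\sum_i b_i=-m$. I would effect this by the bijective shift $b_i=a_i-\frac mr$ and reduce both exponents to the bilinear form using the identities valid when $\sum_i a_i=0$: namely $\sum_{i<j}a_ia_j=-\oh(a,a)$, and $\sum_{i<j}\binom{a_j-a_i}{2}=\tfrac r2(a,a)+(\rho,a)$ (the latter by expanding $\binom{a_j-a_i}{2}=\oh(a_j-a_i)^2-\oh(a_j-a_i)$ and summing over pairs, with $\rho_k=\oh(r+1)-k$ the half-sum in the statement).

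Combining these, the total power of $q$ is $\sum_{i<j}\binom{a_j-a_i}{2}+r\sum_{i<j}a_ia_j=(\rho,a)$, the two $\tfrac r2(a,a)$ contributions cancelling; and writing $a_i=b_i+\frac mr$, the relations $(a,a)=(b,b)-\frac{m^2}{r}$ and $\sum_k\rho_k=0$ give $(\rho,a)=(\rho,b)$ together with $t^{\frac{m^2}{2r}}t^{-\sum_{i<j}a_ia_j}=t^{\oh(a,a)+\frac{m^2}{2r}}=t^{\oh(b,b)}$. Substituting these back yields exactly $\prod_{k\ge1}(1-t^k)^{-r}\sum_b q^{(\rho,b)}t^{\oh(b,b)}$. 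Everything is elementary; the only steps needing care are the blow-up intersection numbers that make the prefactors cancel, and checking that the shift by $\frac mr$ leaves the $q$-exponent invariant, which is precisely the content of $\sum_k\rho_k=0$ with the normalization $\rho_k=\oh(r+1)-k$. I expect that bookkeeping, rather than any genuine difficulty, to be the crux.
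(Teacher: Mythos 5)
Your proof is correct and follows essentially the same route as the paper's: both convert from $\pfa$ to $\pf$ via equation \eqref{eq:ch and De}, use the identities $-\sum_{i<j}a_ia_j=\oh(a,a)$ and $\sum_{i<j}\binom{a_j-a_i}{2}=\frac r2(a,a)+(\rho,a)$, and finish with the shift $a_i\mapsto a_i-\frac mr$. The only difference is that you make explicit two points the paper leaves implicit, namely the cancellation of the $q^{\oh r^2\hi(\cO_S)}$ prefactors and the invariance $(\rho,a)=(\rho,a-\frac mr\mathbf{1})$ coming from $\sum_k\rho_k=0$.
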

\begin{proof}
Let $\sum_{i=1}^ra_i=0$ and $a_i\in\bZ+\frac mr$. Define $a'_i=a_i-\frac mr$. Then $\sum_{i=1}^ra'_i=-m$ and
\[(a',a')=(a,a)+\frac{m^2}{r}.\]
We have
\[-\sum_{i<j}a_ia_j=\oh\sum a_i^2=\oh(a,a),\]
\begin{multline*}
\sum_{i<j}\binom{a_j-a_i}{2}=\oh\sum_{i<j}(a_j-a_i)^2
-\oh\sum_{i<j}(a_j-a_i)\\
=\frac{r-1}{2}(a,a)-\sum_{i<j}a_ia_j+(\rho,a)
=\frac r2(a,a)+(\rho,a).
\end{multline*}
If $c_1'=\pi^*c_1-mC_0$ then
\[-\frac{c_1'^2}{2r}=-\frac{c_1^2}{2r}+\frac{m^2}{2r}.\]
Applying equation \eqref{eq:ch and De}, we obtain
\begin{multline*}
\frac{\pf_{\what\fam'}(t)}{\pf_{\fam'}(t)}
=\frac{t^{-\frac{c_1'^2}{2r}}\pfa_{\what\fam'}(q^{-r}t)}{t^{-\frac{c_1^2}{2r}}\pfa_{\fam'}(q^{-r}t)}
=t^{\frac{m^2}{2r}}\prod_{k\ge1}\frac{1}{(1-t^k)^r}
\sum_{\overst{\sum a_i=0}{a_i\in\bZ+\frac mr}}q^{\frac r2(a,a)+(\rho,a)}(q^{-r}t)^{\oh(a,a)}\\
=\prod_{k\ge1}\frac{1}{(1-t^k)^r}
\sum_{\sum a'_i=-m}q^{(\rho,a')}t^{\oh(a',a')}.
\end{multline*}
\end{proof}

\begin{corollary}
We have
\[
\frac{\pf_{\what\fam}(t)}{\pf_{\fam}(t)}
=\prod_{k\ge1}\prod_{i=1}^{r-1}\frac{1-q^{-i}t^k}{1-t^k}
\sum_{\sum_{i=1}^r a_i=-m}q^{(\rho,a)}t^{\oh(a,a)}.
\]
\end{corollary}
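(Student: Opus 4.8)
The plan is to reduce the ratio for locally free sheaves to the ratio for torsion free sheaves — already computed in the previous corollary — by dividing out the Quot-scheme contributions on each of the two surfaces. The sets $\fam,\fam'$ on $S$ and $\what\fam,\what\fam'$ on $\what S$ are exactly of the shape to which Lemma~\ref{lm:tf and lf} applies: $\fam$ (resp.\ $\what\fam$) consists of locally free sheaves of rank $r$, and $\fam'$ (resp.\ $\what\fam'$) of the torsion free sheaves whose reflexive hull lies in $\fam$ (resp.\ $\what\fam$). Writing $H_r^S(t)=\prod_{k\ge1}\prod_{i=1}^r Z_S(q^{rk-i}t^k)$ and $H_r^{\what S}$ for the analogous product formed with $Z_{\what S}$, the lemma gives
\[\pf_{\fam'}(t)=H_r^S(q^{-r}t)\,\pf_\fam(t),\qquad \pf_{\what\fam'}(t)=H_r^{\what S}(q^{-r}t)\,\pf_{\what\fam}(t),\]
so that, dividing,
\[\frac{\pf_{\what\fam}(t)}{\pf_\fam(t)}=\frac{\pf_{\what\fam'}(t)}{\pf_{\fam'}(t)}\cdot\frac{H_r^S(q^{-r}t)}{H_r^{\what S}(q^{-r}t)},\]
where the first factor on the right is supplied by \eqref{blow tf}.

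The only genuinely new input is the blow-up factor $H_r^S/H_r^{\what S}$. Since $\what S$ is obtained from $S$ by removing a point and gluing in a $\pp$, the scissor relation gives $\mm{\what S}=\mm S-1+\mm\pp=\mm S+q$, whence $Z_{\what S}(t)=\Exp((\mm S+q)t)=Z_S(t)\Exp(qt)=Z_S(t)/(1-qt)$. Substituting this into the two Quot products, every $Z_S$ factor cancels and I am left with
\[\frac{H_r^S(s)}{H_r^{\what S}(s)}=\prod_{k\ge1}\prod_{i=1}^r\bigl(1-q^{rk-i+1}s^k\bigr).\]
Putting $s=q^{-r}t$ collapses the exponent $rk-i+1$ by $rk$, giving
\[\frac{H_r^S(q^{-r}t)}{H_r^{\what S}(q^{-r}t)}=\prod_{k\ge1}\prod_{i=1}^r\bigl(1-q^{1-i}t^k\bigr)=\prod_{k\ge1}\prod_{j=0}^{r-1}\bigl(1-q^{-j}t^k\bigr).\]

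Finally I would multiply this by the torsion free ratio from \eqref{blow tf}. The factor $\prod_{k\ge1}(1-t^k)^{-r}$ there combines with the $j=0$ term $(1-t^k)$ of the blow-up product to leave $\prod_{k\ge1}\prod_{i=1}^{r-1}\frac{1-q^{-i}t^k}{1-t^k}$, which is precisely the claimed infinite product; the lattice sum $\sum_{\sum a_i=-m}q^{(\rho,a)}t^{\oh(a,a)}$ passes through untouched. This yields the stated formula. The computation is essentially bookkeeping once \eqref{blow tf} is available; the one point that must not be overlooked — and where I expect the only real friction — is that the Quot correction $H_r$ of Lemma~\ref{lm:tf and lf} is surface-dependent through $Z_S$, so $S$ and $\what S$ contribute genuinely different corrections, and it is exactly their discrepancy, controlled by $\mm{\what S}=\mm S+q$, that produces the extra factor $\prod_{i=1}^{r-1}(1-q^{-i}t^k)/(1-t^k)$ distinguishing the locally free formula from the torsion free one.
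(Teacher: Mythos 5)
Your proposal is correct and follows essentially the same route as the paper: apply Lemma~\ref{lm:tf and lf} on both $S$ and $\what S$, use $\mm{\what S}=\mm S+q$ to get $Z_{\what S}(t)=Z_S(t)/(1-qt)$, and cancel the resulting factor $\prod_{k\ge1}\prod_{i=1}^r(1-q^{1-i}t^k)$ against $\prod_{k\ge1}(1-t^k)^{-r}$ from the torsion free ratio. The bookkeeping, including the cancellation of the $i=1$ (i.e.\ $j=0$) factor, matches the paper's computation exactly.
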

\begin{proof}
According to Lemma \ref{lm:tf and lf}, we have $\pf_{\what\fam'}(t)=\pf_{\what\fam}(t)H_{\what S,r}(q^{-r}t)$, where
\[H_{\what S,r}(q^{-r}t)
=\prod_{k\ge1}\prod_{i=1}^rZ_{\what S}(q^{-i}t^k)\]
and $\pf_{\fam'}(t)=\pf_{\fam}(t)H_{S,r}(q^{-r}t)$, where
\[H_{S,r}(q^{-r}t)
=\prod_{k\ge1}\prod_{i=1}^rZ_{S}(q^{-i}t^k).\]
Note that $\mm{\what S}=\mm S+q$. Therefore
\[Z_{\what S}(t)=Z_S(t)Z_{\bA^1}(t)=Z_S(t)\frac{1}{1-qt}.\]
This implies
\begin{multline*}
\frac{\pf_{\what\fam}(t)}{\pf_{\fam}(t)}
=\frac{\pf_{\what\fam'}(t)}{\pf_{\fam'}(t)}
\frac{H_{S,r}(q^{-r}t)}{H_{\what S,r}(q^{-r}t)}
=\frac{\pf_{\what\fam'}(t)}{\pf_{\fam'}(t)}
\prod_{k\ge1}\prod_{i=1}^r(1-q^{-i+1}t^k)\\
=\prod_{k\ge1}\rbr{\frac1{(1-t^k)^r}\prod_{i=1}^r(1-q^{-i+1}t^k)}
\sum_{\sum a_i=-m}q^{(\rho,a)}t^{\oh(a,a)}\\
=\prod_{k\ge1}\prod_{i=1}^{r-1}\frac{1-q^{-i}t^k}{1-t^k}
\sum_{\sum a_i=-m}q^{(\rho,a)}t^{\oh(a,a)}.
\end{multline*}
\end{proof}

\begin{remark}
A generalization of the above result (for $m=0$) to principal bundles with respect to arbitrary reductive groups was proved by Kapranov \cite[Theorem 7.4.6]{kapranov_elliptic}. Note, however, that his formula is slightly different from the above result.
\end{remark}

Let $H$ be a nef divisor on $S$. Then $\what H=\pi^*H$ is a nef divisor on $\what S$ and we can consider slope $\what H$-semistable sheaves on $\what S$.

\begin{corollary}
For any $r\ge0$, $c_1\in\NS(S)$, and $m\in\bZ$, we have
\[
\frac{\pf_{\pi^*H}(r,\pi^*c_1+mC_0)}{\pf_H(r,c_1)}
=\prod_{k\ge1}\frac1{(1-t^k)^r}
\sum_{\sum_{i=1}^r a_i=m}q^{(\rho,a)}t^{\oh(a,a)},
\]
\[
\frac{\pf^\circ_{\pi^*H}(r,\pi^*c_1+mC_0)}{\pf^\circ_H(r,c_1)}
=\prod_{k\ge1}\prod_{i=1}^{r-1}\frac{1-q^{-i}t^k}{1-t^k}
\sum_{\sum_{i=1}^r a_i=m}q^{(\rho,a)}t^{\oh(a,a)}.
\]
\end{corollary}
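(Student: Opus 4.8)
The plan is to deduce both identities from the two preceding corollaries by specializing the auxiliary family $\fam$. I would take $\fam$ to be the set of isomorphism classes of slope $H$-semistable locally free sheaves on $S$ of rank $r$ and first Chern class $c_1$; with this choice $\pf_\fam(t)=\pf^\circ_H(r,c_1)$ by definition. To identify $\pf_{\fam'}$, recall that a torsion free sheaf $E$ on a smooth surface is slope $H$-semistable if and only if its reflexive hull $E^\refl$ is: the two have equal rank and first Chern class, hence equal slope; every subsheaf of $E$ is a subsheaf of $E^\refl$; and conversely any $G\sbe E^\refl$ satisfies $\mu_H(G)=\mu_H(G\cap E)$ because $G/(G\cap E)$ embeds into the zero-dimensional sheaf $E^\refl/E$. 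Since reflexive sheaves on a smooth surface are locally free, $\fam'$ is exactly the set of $H$-semistable torsion free sheaves of rank $r$ and first Chern class $c_1$, so $\pf_{\fam'}(t)=\pf_H(r,c_1)$.

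Next I would set up the semistability dictionary for the contraction $\pi$. By the remarks preceding these corollaries, a locally free $E$ on $\what S$ with $c_1(E)=\pi^*c_1-mC_0$ satisfies $E\sbe\pi^*F$ with $\pi^*F/E$ supported on $C_0$, where $F=(\pi_*E)^\refl$ has rank $r$ and first Chern class $c_1$. Because $C_0$ is contracted we have $\pi^*H\cdot C_0=0$, so
\[\mu_{\pi^*H}(E)=\frac{\pi^*H\cdot c_1(E)}{r}=\frac{H\cdot c_1}{r}=\mu_H(F),\]
independently of $m$. The key claim is then that $E$ is slope $\pi^*H$-semistable if and only if $F=(\pi_*E)^\refl$ is slope $H$-semistable, equivalently $\pi_*E\in\fam'$. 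Granting this, $\what\fam'$ is precisely the set of slope $\pi^*H$-semistable torsion free sheaves on $\what S$ of rank $r$ and first Chern class $\pi^*c_1-mC_0$, and $\what\fam$ is its locally free part, so that $\pf_{\what\fam'}(t)=\pf_{\pi^*H}(r,\pi^*c_1-mC_0)$ and $\pf_{\what\fam}(t)=\pf^\circ_{\pi^*H}(r,\pi^*c_1-mC_0)$. Verifying this equivalence is the main obstacle: since $\pi^*H$ is nef but not ample, I would check that destabilizing subsheaves on $\what S$ and on $S$ correspond under $\pi_*$ and $\pi^*$, and that the modifications of $E$ supported on $C_0$ recorded by the inclusion $E\sbe\pi^*F$ have the same $\pi^*H$-slope as $E$ and therefore neither create nor destroy semistability.

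With these identifications in hand, both formulas follow at once from the two preceding corollaries after the substitution $m\mapsto-m$, which is legitimate because those corollaries hold for every $m\in\bZ$. Replacing $m$ by $-m$ turns $c_1(E)=\pi^*c_1-mC_0$ into $\pi^*c_1+mC_0$ and the summation range $\sum_i a_i=-m$ into $\sum_i a_i=m$, while the prefactors $\prod_{k\ge1}(1-t^k)^{-r}$ and $\prod_{k\ge1}\prod_{i=1}^{r-1}\frac{1-q^{-i}t^k}{1-t^k}$ are unaffected. Feeding the identifications $\pf_{\fam'}=\pf_H(r,c_1)$, $\pf_{\fam}=\pf^\circ_H(r,c_1)$, $\pf_{\what\fam'}=\pf_{\pi^*H}(r,\pi^*c_1+mC_0)$, and $\pf_{\what\fam}=\pf^\circ_{\pi^*H}(r,\pi^*c_1+mC_0)$ into the two ratios then yields exactly the claimed equalities. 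Thus the only genuinely new input beyond the previous corollaries is the comparison of $\pi^*H$-semistability on the blow-up with $H$-semistability on the base, which is precisely where the non-ampleness of $\pi^*H$ must be handled with care.
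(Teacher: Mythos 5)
Your proposal is correct and follows the route the paper intends: the corollary is stated without proof precisely because it is the specialization of the two preceding corollaries to the family $\fam$ of slope $H$-semistable locally free sheaves, combined with the substitution $m\mapsto -m$ and the identification of $\fam'$, $\what\fam$, $\what\fam'$ with the corresponding semistable families on $S$ and $\what S$. You correctly isolate the only point requiring an argument --- that slope semistability passes through reflexive hulls and through $\pi_*$/$\pi^*$ for the nef divisor $\pi^*H$ (using $\pi^*H\cdot C_0=0$ and that the modifications along $C_0$ do not change the $\pi^*H$-slope) --- and your sketch of that verification is sound.
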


\begin{remark}
The blow-up $\pi:\Si_1\to\bP^2$ of $\bP^2$ at one point is a Hirzebruch surface. Note that $\Si_1=\bP(\cO_\pp(1)\oplus\cO_\pp)$ is a ruled surface over $\pp$. Let $C_0,\,f$ be divisors on $\Si_1$ introduced in Section \ref{sec:prelim-ruled}.
One can show that, for a line divisor $H$ on $\bP^2$,  $\pi^*H=C_0+f=H_{1,0}$ (see \eqref{eq:Hmn}). In order to determine $\pf_{H}(r,c_1)$ using the previous corollary we have to compute $\pf_{H_{1,0}}(r,\pi^*c_1+mC_0)$ for some $m\in\bZ$. We know how to compute these invariants with respect to the nef divisor $f=H_{0,1}$ (see Theorem \ref{th:main}). Using this result together with the wall-crossing formula and the computation of Zagier \cite{zagier_elementary}, one can compute invariants for the divisor $H_{\eps,1}$ for $0<\eps\ll1$ (see \cite[5.16]{manschot_bps}), or equivalently, for the divisor $H_{1,c}$ for $c\gg0$, which we denote by $H_{1,+\infty}$. Finally, we have to move from $H_{1,+\infty}$ to $H_{1,0}=C_0+f$ using the wall-crossing formula. This was done by Manschot \cite{manschot_betti,manschot_bps} in the case of rank $3$ sheaves.
\end{remark}
\providecommand{\bysame}{\leavevmode\hbox to3em{\hrulefill}\thinspace}
\providecommand{\href}[2]{#2}

 
\end{document}